\setlist{font=\normalfont}
\theoremstyle{definition}
\newtheorem{definition}{Definition}[section]
\newtheorem{algorithm}[definition]{Algorithm}
\newtheorem{assumption}[definition]{Assumption}
\newtheorem{example}[definition]{Example}
\theoremstyle{plain}
\newtheorem{lemma}[definition]{Lemma}
\newtheorem{theorem}[definition]{Theorem}
\newtheorem{proposition}[definition]{Proposition}
\newtheorem{corollary}[definition]{Corollary}
\newcommand{\ConX}{C}
\newcommand{\SpaceY}{Y}
\newcommand{\ConY}{K}
\newcommand{\SpaceH}{H}
\newcommand{\ConH}{\mathcal{K}}
\newcommand{\Feas}{\Phi}
\newcommand{\Subseq}{I}
\newcommand{\R}{\mathbb{R}}
\newcommand{\N}{\mathbb{N}}
\newcommand{\BddMul}{w}
\newcommand{\Lag}{\mathcal{L}}
\newcommand{\wto}{\rightharpoonup}
\newcommand{\Mto}{\overset{M}{\vphantom{\rule{0pt}{0.4ex}}\smash{\,\longrightarrow\,}}}
\newcommand{\dist}[1]{\operatorname{dist}\bigl( #1 \bigr)}
\newcommand{\RadCone}[2]{\mathcal{R}_{#1}(#2)}
\newcommand{\TngCone}[2]{\mathcal{T}_{#1}(#2)}
\newcommand{\NorCone}[2]{\mathcal{N}_{#1}(#2)}
\title{
    Quasi-Variational Inequalities in Banach Spaces: Theory and Augmented Lagrangian Methods%
    \thanks{This research was supported by the German Research Foundation (DFG) within the priority
    program ``Non-smooth and Complementarity-based Distributed Parameter Systems: Simulation and
    Hierarchical Optimization'' (SPP 1962) under grant number KA 1296/24-1.}
}
\date{December 3, 2018}
\author{
    Christian Kanzow$^{\dagger}$ \and Daniel Steck%
    \thanks{University of W\"urzburg, Institute of Mathematics, Campus Hubland Nord, Emil-Fischer-Str.\ 30, 97074 Würzburg, Germany; \{kanzow,daniel.steck\}@mathematik.uni-wuerzburg.de}
}
\begin{document}

\maketitle

{
\small\textbf{\abstractname.}
This paper deals with quasi-variational inequality problems (QVIs) in a generic Banach space setting. We provide a theoretical framework for the analysis of such problems which is based on two key properties: the pseudomonotonicity (in the sense of Brezis) of the variational operator and a Mosco-type continuity of the feasible set mapping. We show that these assumptions can be used to establish the existence of solutions and their computability via suitable approximation techniques. In addition, we provide a practical and easily verifiable sufficient condition for the Mosco-type continuity property in terms of suitable constraint qualifications.

Based on the theoretical framework, we construct an algorithm of augmented Lagrangian type which reduces the QVI to a sequence of standard variational inequalities. A full convergence analysis is provided which includes the existence of solutions of the subproblems as well as the attainment of feasibility and optimality. Applications and numerical results are included to demonstrate the practical viability of the method.
\par\addvspace{\baselineskip}
}

{
\small\textbf{Keywords.}
Quasi-variational inequality, Mosco convergence, augmented Lagrangian method, global convergence, existence of solutions, Lagrange multiplier, constraint qualification.
\par\addvspace{\baselineskip}
}

{
\small\textbf{AMS subject classifications.}
49J, 49K, 49M, 65K, 90C.
\par\addvspace{\baselineskip}
}

\section{Introduction}

Let $X$ be a real Banach space with continuous dual $X^*$, and let $F:X\to X^*$, $\Feas:X\rightrightarrows X$ be given mappings. The purpose of this paper is to analyze the \emph{quasi-variational inequality (QVI)} which consists of finding $x\in X$ such that
\begin{equation}\label{Eq:QVI}
    x\in \Feas(x), \quad \langle F(x),d\rangle\ge 0
    \quad \forall d\in \TngCone{\Feas(x)}{x},
\end{equation}
where $\TngCone{\Feas(x)}{x}$ is the (Bouligand) tangent cone to $\Feas(x)$ at $x$, see Section~\ref{Sec:Prelims}. If $\Feas$ is \emph{convex-valued}, i.e., if $\Feas(x)$ is convex for all $x$, then \eqref{Eq:QVI} can equivalently be stated as
\begin{equation}\label{Eq:QVI_Convex}
    x\in \Feas(x), \quad \langle F(x),y-x\rangle\ge 0
    \quad \forall y\in \Feas(x).
\end{equation}
The QVI was first defined in \cite{Bensoussan1984} and has since become a standard tool for the modeling of various equilibrium-type scenarios in the natural sciences. The resulting applications include game theory \cite{Harker1991}, solid and continuum mechanics \cite{Beremlijski2002,Haslinger1984,Kravchuk2007,Outrata1998}, economics \cite{Ichiishi1983,Jing1999}, probability theory \cite{Kharroubi2010}, transportation \cite{Bliemer2003,DeLuca1992,Scrimali2004}, biology \cite{Hammerstein1994}, and stationary problems
in superconductivity, thermoplasticity, or electrostatics \cite{Hintermueller2012,Hintermueller2013a,Kunze2000,Rodrigues2000,Alphonse2018}. For further information, we refer the reader to the corresponding papers, the monographs \cite{Baiocchi1984,Kravchuk2007,Mosco1976}, and the references therein.

Despite the abundant applications of QVIs, there is little in terms of a general theory or algorithmic approach for these problems, particularly in infinite dimensions. The present paper is an attempt to fill this gap. Observe that, in most applications, the feasible set mapping $\Feas$ which governs the QVI can be cast into the general framework
\begin{equation}\label{Eq:FeasibleSet}
    \Feas(x)=\{ y\in \ConX: G(x,y)\in\ConY \},
\end{equation}
where $\ConX\subseteq X$ and $\ConY\subseteq\SpaceY$ are nonempty closed convex sets, $Y$ is a real Banach space, and $G:X^2\to\SpaceY$ a given mapping. One can think of $\ConX$ as the set of ``simple'' or non-parametric constraints, whereas $G$ describes the parametric constraints which turn the problem into a proper QVI. This decomposition is of course not unique.

In this paper, we provide a thorough analysis of QVIs in general and for the specific case where the feasible set is given by \eqref{Eq:FeasibleSet}. Our analysis is based on two key properties: the pseudomonotonicity of $F$ (in the sense of Brezis) and a weak sequential continuity of $\Feas$ involving the notion of Mosco convergence (see \cref{Sec:Prelims}). These properties provide a general framework which includes many application examples, and they can be used to establish multiple desirable characteristics of QVIs such as the existence of solutions and their computability via suitable approximation techniques. In addition, we provide a systematic approach to Karush--Kuhn--Tucker(KKT)-type optimality conditions for QVIs, and give a sufficient condition for the Mosco-type continuity of $\Feas$ in terms of suitable constraint qualifications. To the best of our knowledge, this is one of the first generic and practically verifiable sufficient conditions for Mosco convergence in the QVI literature.

After the theoretical investigations, we turn our attention to an algorithmic approach based on augmented Lagrangian techniques. Recall that the augmented Lagrangian method (ALM) is one of the standard approaches for the solution of constrained optimization problems and is contained in almost any textbook on optimization \cite{Bertsekas1982,Bertsekas1995,Nocedal2006,Conn2000}. In recent years, ALMs have seen a certain resurgence \cite{Andreani2007,Birgin2014} in the form of safeguarded methods which use a slightly different update of the Lagrange multiplier estimate and turn out to have very strong global convergence properties \cite{Birgin2014}. A comparison of the classical ALM and its safeguarded analogue can be found in \cite{Kanzow2017}. Moreover, the safeguarded ALM has been extended to quasi-variational inequalities in finite dimensions \cite{Pang2005,Kanzow2016,Kanzow2017a} and to constrained optimization problems and variational inequalities in Banach spaces \cite{Kanzow2016b,Kanzow2017b,Kanzow2017c,Karl2018}.

In the present paper, we will continue these developments and present a variant of the ALM for QVIs in the general framework described above. Using the decomposition \eqref{Eq:FeasibleSet} of the feasible set, we use a combined multiplier-penalty approach to eliminate the parametric constraint $G(x,y)\in\ConY$ and therefore reduce the QVI to a sequence of standard variational inequalities (VIs) involving the set $\ConX$. The resulting algorithm generates a primal-dual sequence with certain asymptotic optimality properties.

The paper is organized as follows. In Section~\ref{Sec:Prelims}, we establish and recall some theoretical background for the analysis of QVIs, including elements of convex and functional analysis. Section~\ref{Sec:KKT} contains a formal approach to the KKT conditions of the QVI, and we continue with an existence result for solutions in Section~\ref{Sec:Existence}. Starting with Section~\ref{Sec:Method}, we turn our attention to the augmented Lagrangian method. After a formal statement of the algorithm, we continue with a global convergence analysis in Section~\ref{Sec:ConvexConv} and a primal-dual convergence theory for the nonconvex case in Section~\ref{Sec:GeneralConv}. We then provide some applications of the algorithm in Section~\ref{Sec:Applic} and conclude with some final remarks in Section~\ref{Sec:Final}.

\textbf{Notation.} Throughout this paper, $X$ and $\SpaceY$ are always real Banach spaces, and their duals are denoted by $X^*$ and $\SpaceY^*$, respectively. We write $\to$, $\wto$, and $\wto^*$ for strong, weak, and weak-$^*$ convergence, respectively, and denote by $B_r^X$ the closed $r$-ball around zero in $X$ (similarly for $\SpaceY$, etc.). Moreover, partial derivatives are denoted by $D_x,D_y$, and so on.

\section{Preliminaries}\label{Sec:Prelims}

We begin with some preliminary definitions. If $S$ is a nonempty closed subset of some space $Z$, then $S^{\circ}:=\{ \psi\in Z^*: \langle \psi,s \rangle\le 0~ \forall s\in S \}$ is the \emph{polar cone} of $S$. Moreover, if $z\in S$ is a given point, we denote by
\begin{equation*}
    \TngCone{S}{z}:=\bigl\{ d\in Z: \exists\, z^k\to z,~t_k\downarrow 0 \text{ such that } z^k\in S
    \text{ and } (z^k-z)/t_k\to d \bigr\}
\end{equation*}
the \emph{tangent cone} of $S$ in $z$. If $S$ is additionally convex, we also define the \emph{radial} and \emph{normal cones}
\begin{equation*}
    \RadCone{S}{z}:=\bigl\{ \alpha (s-z): \alpha\ge 0,~s\in S \bigr\}, \quad
    \NorCone{S}{z}:=\bigl\{ \psi\in Z^*: \langle \psi,s-z \rangle\le 0~
    \forall s\in S \bigr\}.
\end{equation*}
For convex sets, it is well-known that $\TngCone{S}{z}=\overline{\RadCone{S}{z}}$ and $\NorCone{S}{z}=\TngCone{S}{z}^{\circ}=\RadCone{S}{z}^{\circ}=(S-z)^{\circ}$. Clearly, if $Z$ is a Hilbert space, we may treat $\NorCone{S}{z}$ and other polars of sets in $Z$ as subsets of $Z$ instead of $Z^*$.

Throughout this paper, we will also need various notions of continuity.

\begin{definition}\label{Dfn:Continuities}
    Let $X,Y$ be Banach spaces and $T:X\to Y$ an operator. We say that $T$ is
\begin{enumerate}[(a)]
\item \emph{bounded} if it maps bounded sets to bounded sets;
\item \emph{weakly sequentially continuous} if $x^k\wto x$ implies $T(x^k)\wto T(x)$;
\item \emph{weak-$^*$ sequentially continuous} if $Y=W^*$ for some Banach space $W$, and $x^k\wto x$ implies $T(x^k)\wto^* T(x)$ in $W^*$;
\item \emph{completely continuous} if $x^k\wto x$ implies $T(x^k)\to T(x)$.
\end{enumerate}
\end{definition}

Clearly, complete continuity implies ordinary continuity as well as weak sequential continuity. Moreover, the latter implies weak-$^*$ sequential continuity.

\subsection{Weak Topologies and Mosco convergence}\label{Sec:PrelimsWeakTop}

For the treatment of the space $X$, we will need some topological concepts from Banach space theory. The \emph{weak topology on $X$} is defined as the weakest (coarsest) topology for which all $f\in X^*$ are continuous \cite{Brezis2011}. Accordingly, we call a set weakly closed (open, compact) if it is closed (open, compact) in the weak topology. Note that the notion of convergence induced by the weak topology is precisely that of weak convergence \cite{Rudin1991}.

In many cases, e.g., when applying results from the literature which are formulated in a topological framework, it is desirable to use sequential notions of closedness, continuity, etc.\ instead of their topological counterparts. For instance, in the special case of the weak topology, it is well-known that every weakly closed set is weakly sequentially closed, but the converse is not true in general \cite{Bauschke2011}.

A possible remedy to this situation is to define a slightly different topology, which we call the \emph{weak sequential topology on $X$}. This is the topology induced by weak convergence; more precisely, a set is called open in this topology if its complement is weakly sequentially closed. It is easy to see that this induces a topology (see \cite{Birkhoff1936,Dudley1964}). Moreover, since every weakly closed set is weakly sequentially closed, the weak sequential topology is finer than the weak topology, and like the latter \cite[Prop.~3.3]{Brezis2011} it is also a Hausdorff topology. We will make use of the weak sequential topology in \cref{Sec:KKT}.

%
%

For the analysis of the QVI \eqref{Eq:QVI}, we will inevitably need certain continuity properties of the set-valued map $\Feas:X\rightrightarrows X$. Since we are dealing with a possibly infinite-dimensional space $X$, these properties should take into account the weak (sequential) topology on $X$. An important notion in this context is that of Mosco convergence.

\begin{definition}[Mosco convergence]\label{Dfn:MoscoConvergence}
    Let $S$ and $S_k$, $k\in\N$, be subsets of $X$. We say that $\{S_k\}$ \emph{Mosco converges} to $S$, and write $S_k\Mto S$, if
\begin{enumerate}[(i)]
\item for every $y\in S$, there is a sequence $y^k\in S_k$ such that $y^k\to y$, and
\item whenever $y^k\in S_k$ for all $k$ and $y$ is a weak limit point of $\{y^k\}$, then $y\in S$.
\end{enumerate}
\end{definition}

The concept of Mosco convergence plays a key role in multiple aspects of the analysis of QVIs such as existence \cite{Mosco1976,Kunze2000}, approximation \cite{Lignola1997}, or the convergence of algorithms \cite{Hintermueller2012}. It is typically used as part of a continuity property of the mapping $\Feas$.

\begin{definition}[Weak Mosco-continuity]\label{Dfn:MoscoContinuity}
    Let $\Feas:X\rightrightarrows X$ be a set-valued mapping and $x\in X$. We say that $\Feas$ is \emph{weakly Mosco-continuous in $x$} if $x^k\wto x$ implies $\Feas(x^k)\Mto \Feas(x)$. If this holds for every $x\in X$, we simply say that $\Feas$ is \emph{weakly Mosco-continuous}.
\end{definition}

The two conditions defining weak Mosco-continuity are occasionally referred to as complete inner and weak outer semicontinuity. Observe moreover that, if $\Feas$ is weakly Mosco-continuous, then $\Feas(x)$ is weakly closed for all $x\in X$.

Note that we will give a sufficient condition for the weak Mosco-continuity of the feasible set mapping in terms of certain constraint qualifications in Section~\ref{Sec:KKT}.

\subsection{Pseudomonotone Operators}\label{Sec:PrelimsPseudo}

The purpose of this section is to discuss the concept of \emph{pseudomonotone operators} in the sense of Brezis \cite{Brezis1968}, see also \cite{Renardy2004,Zeidler1990}. Note that there is another property in the literature, introduced by Karamardian \cite{Karamardian1976}, which is also sometimes referred to as pseudomonotonicity. We stress that the two concepts are distinct and that, throughout the remainder of this paper, pseudomonotonicity will always refer to the property below.

\begin{definition}[Pseudomonotonicity]\label{Dfn:Pseudomonotone}
    We say that an operator $T:X\to X^*$ is \emph{pseudomonotone} if, for every sequence $\{x^k\}\subseteq X$ such that $x^k\wto x\in X$ and $\limsup_{k\to\infty} \langle T(x^k),x^k-x \rangle\le 0$, we have $\langle T(x),x-y \rangle\le \liminf_{k\to\infty} \langle T(x^k),x^k-y\rangle$ for all $y\in X$.
\end{definition}

Despite its peculiar appearance, pseudomonotone operators will prove immensely useful because they provide a unified approach to monotone and nonmonotone operators. It should be noted that, despite its name suggesting otherwise, pseudomonotonicity can also be viewed as a rather weak kind of continuity. In fact, many operators are pseudomonotone simply by virtue of satisfying certain continuity properties. Some important examples are summarized in the following lemma.

\begin{lemma}[Sufficient conditions for pseudomonotonicity]\label{Lem:PseudoSufficient}
    Let $X$ be a Banach space and $T,U:X\to X^*$ given operators. Then:
\begin{enumerate}[(a)]
\item If $T$ is monotone and continuous, then $T$ is pseudomonotone.
\item If $T$ is completely continuous, then $T$ is pseudomonotone.
\item If $T$ is continuous and $\dim (X)<+\infty$, then $T$ is pseudomonotone.
\item If $T$ and $U$ are pseudomonotone, then $T+U$ is pseudomonotone.
\end{enumerate}
\end{lemma}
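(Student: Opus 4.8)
The plan is to prove each of the four assertions in turn, using the definition of pseudomonotonicity directly. Throughout, fix a sequence $x^k \wto x$ with $\limsup_{k\to\infty}\langle T(x^k),x^k-x\rangle \le 0$; the goal in each case is to verify $\langle T(x),x-y\rangle \le \liminf_{k\to\infty}\langle T(x^k),x^k-y\rangle$ for all $y$.

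For part (a), the key observation is that monotonicity gives $\langle T(x^k)-T(x),x^k-x\rangle \ge 0$, so $\langle T(x^k),x^k-x\rangle \ge \langle T(x),x^k-x\rangle \to 0$ (the latter since $x^k\wto x$). Combined with the standing $\limsup$ assumption, this forces $\langle T(x^k),x^k-x\rangle \to 0$. From here I would split $\langle T(x^k),x^k-y\rangle = \langle T(x^k),x^k-x\rangle + \langle T(x^k),x-y\rangle$; the first term tends to $0$, and for the second I need $\langle T(x^k),x-y\rangle \to \langle T(x),x-y\rangle$. This last convergence is where continuity enters: I expect to need a strong-convergence argument showing $x^k\to x$, or alternatively to first establish $T(x^k)\wto T(x)$. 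In fact the cleanest route is the classical one: use monotonicity plus a Minty-type/auxiliary-point trick (test with $x_t = x + t(y-x)$, $t\downarrow 0$) so that only continuity of $T$ along line segments is required to pass to the limit. This is the most delicate step of part (a).

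Part (b) is comparatively direct. If $T$ is completely continuous, then $x^k\wto x$ yields $T(x^k)\to T(x)$ strongly in $X^*$. Since $x^k-y \wto x-y$ is bounded, and strong-times-weak convergence passes to the limit of the duality pairing, I get $\langle T(x^k),x^k-y\rangle \to \langle T(x),x-y\rangle$ for every $y$; hence the required inequality holds with equality in the limit, and the $\limsup$ hypothesis is not even needed. (Strictly, $\langle T(x^k),x^k-y\rangle - \langle T(x),x-y\rangle = \langle T(x^k)-T(x),x^k-y\rangle + \langle T(x),x^k-x\rangle$, and both terms vanish.) Part (c) then follows as a special case: in finite dimensions weak and strong convergence coincide, so a continuous $T$ is automatically completely continuous, and (b) applies. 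Alternatively one argues directly that $x^k\to x$ and $T(x^k)\to T(x)$ by continuity.

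Part (d) is purely formal. Given the $\limsup$ hypothesis for $T+U$, the obstacle is that $\limsup_k\langle (T+U)(x^k),x^k-x\rangle\le 0$ does not immediately give the analogous bound for $T$ and $U$ separately. The standard device is to test the definition of pseudomonotonicity of each operator with $y=x$: suppose for contradiction that $\limsup_k\langle T(x^k),x^k-x\rangle > 0$ (and symmetrically for $U$). Passing to a subsequence and using the individual pseudomonotonicity inequalities evaluated at $y=x$, one derives $\liminf_k\langle U(x^k),x^k-x\rangle < 0$, which when combined with the sum bound produces a contradiction; this shows $\langle T(x^k),x^k-x\rangle\to 0$ and $\langle U(x^k),x^k-x\rangle\to 0$ separately. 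Once both sequences have the correct limit behavior, I apply pseudomonotonicity of $T$ and of $U$ individually and add the two resulting inequalities, using $\liminf(a_k+b_k)\ge \liminf a_k + \liminf b_k$, to obtain the inequality for $T+U$. The subsequence bookkeeping in decoupling the two $\limsup$ conditions is the main technical point here.
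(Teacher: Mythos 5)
Your proposal is correct, but note that the paper does not actually prove this lemma: its ``proof'' is a citation of \cite[Prop.~27.6]{Zeidler1990} together with the remark that the argument there never uses the reflexivity of $X$. What you have written is, in essence, a self-contained reconstruction of that cited proof, and all four parts go through as you describe. In (a), your first instinct --- establishing $x^k\to x$ or $T(x^k)\wto T(x)$ --- would indeed fail (neither follows from the hypotheses), but the Minty-type segment trick you settle on is exactly the classical mechanism: monotonicity tested against $x_t:=x+t(y-x)$ gives $\liminf_{k}\langle T(x^k),x-y\rangle\ge\langle T(x_t),x-y\rangle$ after using $\langle T(x^k),x^k-x\rangle\to 0$, and letting $t\downarrow 0$ requires only continuity of $T$ along the segment; the split $\langle T(x^k),x^k-y\rangle=\langle T(x^k),x^k-x\rangle+\langle T(x^k),x-y\rangle$ then yields the claim. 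Parts (b) and (c) are as routine as you say, and your observation that the $\limsup$ hypothesis is not even needed in (b) is accurate. In (d), the decoupling step deserves the care you flag: if $\limsup_k\langle T(x^k),x^k-x\rangle>0$, pass to a subsequence realizing this $\limsup$; along it $\limsup_k\langle U(x^k),x^k-x\rangle$ is strictly negative, so pseudomonotonicity of $U$ applies on that subsequence, and testing with $y:=x$ gives $0=\langle U(x),x-x\rangle\le\liminf_k\langle U(x^k),x^k-x\rangle$, a contradiction; this yields $\limsup\le 0$ for each operator separately (and then, testing each with $y=x$ once more, the convergence to $0$ you assert), after which superadditivity of $\liminf$ finishes the argument exactly as you indicate --- the individual $\liminf$'s are bounded below by the pseudomonotonicity inequalities, so no $\infty-\infty$ issue arises. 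The net effect of your write-up relative to the paper is one of self-containedness: it substantiates explicitly the paper's unproved remark that reflexivity plays no role in any of the four assertions.
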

\begin{proof}
    This is essentially \cite[Prop.~27.6]{Zeidler1990}. Note that \cite{Zeidler1990} defines pseudomonotonicity only on reflexive Banach spaces, but this property is not used in the proof of the result.
\end{proof}

It follows from the lemma above that, in particular, every continuous operator $T:X\to X^*$ on a finite-dimensional space $X$ is pseudomonotone, even if it is not monotone.

The following result shows that bounded pseudomonotone operators automatically enjoy some kind of continuity. Note that the result generalizes \cite[Prop.~27.7(b)]{Zeidler1990} since we do not assume the reflexivity of $X$.

\begin{lemma}\label{Lem:PseudoProperties}
    Let $F:X\to X^*$ be a bounded pseudomonotone operator. Then $F$ is \emph{demicontinuous}, i.e., it maps strongly convergent sequences to weak-$^*$ convergent sequences. In particular, if $\dim (X)<+\infty$, then $F$ is continuous.
\end{lemma}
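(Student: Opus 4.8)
The plan is to work directly from the definition of pseudomonotonicity, using boundedness only to verify its hypothesis. Let $x^k\to x$ strongly in $X$. First I would observe that strong convergence implies $x^k\wto x$ and that $\{x^k\}$ is bounded; since $F$ is bounded, the sequence $\{F(x^k)\}$ is then bounded in $X^*$. Consequently $|\langle F(x^k),x^k-x\rangle|\le \|F(x^k)\|_{X^*}\|x^k-x\|_X\to 0$, so in particular $\limsup_{k\to\infty}\langle F(x^k),x^k-x\rangle\le 0$. This is precisely the hypothesis required to invoke pseudomonotonicity for the full sequence $\{x^k\}$.

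Applying \cref{Dfn:Pseudomonotone} yields $\langle F(x),x-y\rangle\le\liminf_{k\to\infty}\langle F(x^k),x^k-y\rangle$ for every $y\in X$. The next step is to eliminate the $x^k$ on the right: writing $\langle F(x^k),x^k-y\rangle=\langle F(x^k),x^k-x\rangle+\langle F(x^k),x-y\rangle$ and recalling that the first summand tends to zero, the $\liminf$ is unchanged, so $\langle F(x),x-y\rangle\le\liminf_{k\to\infty}\langle F(x^k),x-y\rangle$ for all $y\in X$. Substituting the arbitrary element $z:=x-y\in X$, this reads $\langle F(x),z\rangle\le\liminf_{k\to\infty}\langle F(x^k),z\rangle$ for all $z\in X$.

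The decisive point is that pseudomonotonicity only delivers this one-sided ($\liminf$) estimate, whereas weak-$^*$ convergence requires a genuine limit. I would overcome this by applying the inequality twice, to $z$ and to $-z$: the second application gives $-\langle F(x),z\rangle\le\liminf_{k\to\infty}\langle F(x^k),-z\rangle=-\limsup_{k\to\infty}\langle F(x^k),z\rangle$, i.e.\ $\limsup_{k\to\infty}\langle F(x^k),z\rangle\le\langle F(x),z\rangle$. Sandwiching the two bounds gives $\langle F(x^k),z\rangle\to\langle F(x),z\rangle$ for every $z\in X$, which is exactly $F(x^k)\wto^* F(x)$ in $X^*$; hence $F$ is demicontinuous.

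Finally, for the finite-dimensional claim, I would note that when $\dim(X)<+\infty$ the space $X^*$ is finite-dimensional as well, so weak-$^*$ convergence coincides with norm convergence; thus $F$ maps strongly convergent sequences to strongly convergent sequences and is therefore (sequentially, hence topologically, since $X$ is metrizable) continuous. The main obstacle is the $\pm z$ sandwich argument described above, which upgrades the inherently one-sided estimate of pseudomonotonicity to a two-sided limit; everything else is a routine reduction.
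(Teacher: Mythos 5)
Your proposal is correct and follows essentially the same route as the paper's proof: the paper's insertion of $\tilde{y}:=2x-y$ (so that $x-\tilde{y}=-(x-y)$) is exactly your $\pm z$ sandwich that upgrades the one-sided $\liminf$ estimate to a two-sided limit, and the preliminary steps (boundedness of $\{F(x^k)\}$, $\langle F(x^k),x^k-x\rangle\to 0$, replacing $x^k$ by $x$ in the pairing) coincide. Your explicit argument for the finite-dimensional claim is a correct filling-in of a step the paper leaves implicit.
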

\begin{proof}
    Let $\{x^k\}\subseteq X$ be a sequence with $x^k\to x$ for some $x\in X$. Observe that $\{F(x^k)\}$ is bounded in $X^*$ and hence
\begin{equation*}
    \bigl| \bigl\langle F(x^k), x^k-x \bigr\rangle \bigr|
    \le \|F(x^k)\|_{X^*} \|x^k-x\|_X \to 0.
\end{equation*}
    Thus, by pseudomonotonicity, we obtain
\begin{equation}\label{Eq:LemPseudoProperties1}
    \langle F(x),x-y \rangle \le \liminf_{k\to\infty}
    \big\langle F(x^k),x^k-y \big\rangle
    = \liminf_{k\to\infty} \big\langle F(x^k),x-y \big\rangle
\end{equation}
    for all $y\in X$, where we used the boundedness of $\{F(x^k)\}$ and the fact that $x^k\to x$. Inserting $\tilde{y}:=2 x-y$ for an arbitrary $y\in X$, we also obtain
\begin{equation}\label{Eq:LemPseudoProperties2}
    \langle F(x),y-x \rangle = \langle F(x),x-\tilde{y} \rangle \le
    \liminf_{k\to\infty} \big\langle F(x^k),x^k-\tilde{y} \big\rangle =
    \liminf_{k\to\infty} \big\langle F(x^k),y-x \big\rangle,
\end{equation}
    where the last equality uses the fact that $\{F(x^k)\}$ is bounded and that $x^k-\tilde{y}=y-x+o(1)$. Putting \eqref{Eq:LemPseudoProperties1} and \eqref{Eq:LemPseudoProperties2} together, it follows that $\langle F(x^k),x-y \rangle \to \langle F(x),x-y \rangle$ for all $y\in X$. This implies $F(x^k)\wto^* F(x)$, and the proof is done.
\end{proof}

In the context of QVIs, the pseudomonotonicity of the mapping $F$ plays a key role since it ensures, together with the weak Mosco-continuity of $\Feas$, that weak limit points of sequences of approximate solutions of the QVI are exact solutions.

\begin{proposition}\label{Prop:PseudomonStability}
    Let $F$ be a bounded pseudomonotone operator and let $\Feas$ be weakly Mosco-continuous. Assume that $\{x^k\}\subseteq X$ converges weakly to $\bar{x}$, that $\bar{x}\in\Feas(\bar{x})$, and that there are null sequences $\{\delta_k\},\{\varepsilon_k\}\subseteq \R$ (possibly negative) such that
\begin{equation}\label{Eq:PropPseudomonStability1}
    \langle F(x^k),y-x^k\rangle\ge \delta_k+\varepsilon_k \|y-x^k\|_X
    \quad\forall y\in \Feas(x^k)
\end{equation}
    for all $k$. Then $\bar{x}$ is a solution of the QVI.
\end{proposition}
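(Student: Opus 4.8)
The plan is to make the pseudomonotonicity of $F$ the central engine, so the first order of business is to verify its hypothesis, namely that $\limsup_{k\to\infty}\langle F(x^k),x^k-\bar{x}\rangle\le 0$. Since $\{x^k\}$ converges weakly it is bounded, hence $\{F(x^k)\}$ is bounded in $X^*$ by the boundedness of $F$; this observation will be used repeatedly to discard error terms. Because $\Feas$ is weakly Mosco-continuous and $x^k\wto\bar{x}$, we have $\Feas(x^k)\Mto\Feas(\bar{x})$, and I would exploit the inner (recovery-sequence) part of this Mosco convergence throughout. The final goal is to show $\langle F(\bar{x}),d\rangle\ge 0$ for every $d\in\TngCone{\Feas(\bar{x})}{\bar{x}}$, the membership $\bar{x}\in\Feas(\bar{x})$ being given.

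Concretely, applying condition (i) of \cref{Dfn:MoscoConvergence} to the point $\bar{x}\in\Feas(\bar{x})$ produces a sequence $y^k\in\Feas(x^k)$ with $y^k\to\bar{x}$ strongly. Inserting $y=y^k$ into \eqref{Eq:PropPseudomonStability1} gives $\langle F(x^k),y^k-x^k\rangle\ge\delta_k+\varepsilon_k\|y^k-x^k\|_X$. Splitting $\langle F(x^k),y^k-x^k\rangle=\langle F(x^k),\bar{x}-x^k\rangle+\langle F(x^k),y^k-\bar{x}\rangle$ and using that $\|F(x^k)\|_{X^*}$ is bounded while $y^k\to\bar{x}$, the correction $\langle F(x^k),y^k-\bar{x}\rangle$ is $o(1)$; likewise $\varepsilon_k\|y^k-x^k\|_X\to 0$ and $\delta_k\to 0$. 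Rearranging then yields $\liminf_{k\to\infty}\langle F(x^k),\bar{x}-x^k\rangle\ge 0$, which is precisely the premise of \cref{Dfn:Pseudomonotone}. Pseudomonotonicity therefore delivers the key inequality $\langle F(\bar{x}),\bar{x}-y\rangle\le\liminf_{k\to\infty}\langle F(x^k),x^k-y\rangle$ for every $y\in X$.

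It remains to translate this abstract inequality into the tangent-cone statement. I would fix an arbitrary $d\in\TngCone{\Feas(\bar{x})}{\bar{x}}$ with defining data $z^j\to\bar{x}$, $t_j\downarrow 0$, $z^j\in\Feas(\bar{x})$, and $(z^j-\bar{x})/t_j\to d$. For each fixed $j$, condition (i) of Mosco convergence supplies $w^{j,k}\in\Feas(x^k)$ with $w^{j,k}\to z^j$ as $k\to\infty$; feeding $y=w^{j,k}$ into \eqref{Eq:PropPseudomonStability1} and peeling off the $o(1)$ term $\langle F(x^k),w^{j,k}-z^j\rangle$ exactly as above shows $\limsup_{k\to\infty}\langle F(x^k),x^k-z^j\rangle\le 0$. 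Combining this with the pseudomonotonicity inequality at $y=z^j$ gives $\langle F(\bar{x}),\bar{x}-z^j\rangle\le 0$, that is, $\langle F(\bar{x}),(z^j-\bar{x})/t_j\rangle\ge 0$ after dividing by $t_j>0$. Letting $j\to\infty$ and using the strong convergence $(z^j-\bar{x})/t_j\to d$ together with continuity of the fixed functional $F(\bar{x})\in X^*$ yields $\langle F(\bar{x}),d\rangle\ge 0$, as required.

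The main obstacle is not any individual estimate but the disciplined handling of two nested limiting processes. In both the verification of the pseudomonotonicity premise and the passage to the tangent cone, I must manufacture recovery sequences from Mosco's inner condition and then show that every resulting correction --- arising from the mismatch between $\Feas(x^k)$ and $\Feas(\bar{x})$ and from the null sequences $\delta_k,\varepsilon_k$ --- vanishes in the limit. The structural point that makes this clean is that boundedness of $F$ converts the strong convergence of the recovery sequences into uniform control of the dual pairings, so that the inner limit ($k\to\infty$, for fixed tangent data $z^j,t_j$) can be taken first and the outer limit ($j\to\infty$) afterwards, without any delicate diagonal argument.
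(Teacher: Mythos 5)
Your proof is correct and follows essentially the same route as the paper's: a Mosco recovery sequence for $\bar{x}$ establishes the pseudomonotonicity premise $\limsup_{k\to\infty}\langle F(x^k),x^k-\bar{x}\rangle\le 0$, pseudomonotonicity then yields the limit inequality, and recovery sequences for points of $\Feas(\bar{x})$ (using boundedness of $\{F(x^k)\}$ to discard the error terms) close the argument. The only difference is cosmetic: the paper stops once it has shown $\langle F(\bar{x}),y-\bar{x}\rangle\ge 0$ for all $y\in\Feas(\bar{x})$, whereas you additionally spell out the routine passage from this inequality to the tangent-cone form $\langle F(\bar{x}),d\rangle\ge 0$ for all $d\in\TngCone{\Feas(\bar{x})}{\bar{x}}$, which the paper leaves implicit.
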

\begin{proof}
    By Mosco-continuity, there is a sequence $\bar{x}^k\in \Feas(x^k)$ such that $\bar{x}^k\to \bar{x}$. Inserting $\bar{x}^k$ into \eqref{Eq:PropPseudomonStability1} yields $\liminf_{k\to\infty}\langle F(x^k),\bar{x}^k-x^k \rangle\ge 0$ and, since $\{F(x^k)\}$ is bounded, $\liminf_{k\to\infty}\langle F(x^k),\bar{x}-x^k\rangle\ge 0$. The pseudomonotonicity of $F$ therefore implies that
\begin{equation}\label{Eq:PropPseudomonStability2}
    \langle F(\bar{x}),y-\bar{x}\rangle\ge \limsup_{k\to\infty}
    \big\langle F(x^k),y-x^k \big\rangle \quad\text{for all }y\in X.
\end{equation}
    To show that $\bar{x}$ solves the QVI, let $y\in \Feas(\bar{x})$. Using the Mosco-continuity of $\Feas$, we obtain a sequence $y^k\in\Feas(x^k)$ such that $y^k\to y$. By \eqref{Eq:PropPseudomonStability1}, we have $\liminf_{k\to\infty}\langle F(x^k),y^k-x^k\rangle\ge 0$, hence $\liminf_{k\to\infty}\langle F(x^k),y-x^k \rangle\ge 0$, and \eqref{Eq:PropPseudomonStability2} implies that $\langle F(\bar{x}),y-\bar{x}\rangle\ge 0$.
\end{proof}

The above result will play a key role in our subsequent analysis. Note that the assumption \eqref{Eq:PropPseudomonStability1} can be relaxed; in fact, we only need the right-hand side to converge to zero whenever $y^k\in\Feas(x^k)$ and $\{y^k\}$ remains bounded. However, for our purposes, the formulation in \eqref{Eq:PropPseudomonStability1} is sufficient.

Let us also remark that, if $\Feas(x)\equiv \Feas$ is constant, then the Mosco-continuity in Proposition~\ref{Prop:PseudomonStability} is satisfied trivially and we obtain a stability result under pseudomonotonicity alone. In this case, it is easy to see that the boundedness of $F$ can be omitted.

\subsection{Convexity of the Feasible Set Mapping}\label{Sec:PrelimsConvexity}

As mentioned in the introduction, an important distinction in the context of QVIs is whether the feasible sets $\Feas(x)$, $x\in X$, are convex or not. This question is particularly important if the feasible set has the form \eqref{Eq:FeasibleSet}, since we need to clarify which requirements on the mapping $G$ are sufficient for the convexity of $\Feas(x)$. Ideally, these conditions should be easy and also yield some useful analytical properties.

Assume for the moment that $\ConY$ is a closed convex \emph{cone}. Then $\ConY$ induces an order relation, $y\le_{\ConY} z$ if and only if $z-y\in\ConY$, and $\ConY$ itself can be regarded as the nonnegative cone with respect to $\le_K$. Hence, it is natural to assume that the mapping $G$ is concave with respect to this ordering.

If $\ConY$ is not a cone, then the appropriate order relation turns out to be induced by the recession cone $\ConY_{\infty}:=\{ y\in\SpaceY: y+\ConY\subseteq \ConY \}$. Note that $\ConY_{\infty}$ is always a nonempty closed convex cone \cite{Bonnans2000} and therefore induces an order relation as outlined above.

\begin{definition}[$\ConY_{\infty}$-concavity]\label{Dfn:KConcave}
    We say that $G$ is \emph{$\ConY_{\infty}$-concave with respect to $y$} if the mapping $G(x,\cdot)$ is concave with respect to the order relation induced by $\ConY_{\infty}$, or equivalently
    \begin{equation*}
        G( x,(1-\alpha)y_1+\alpha y_2 )-(1-\alpha)G(x,y_1)-\alpha G(x,y_2)\in \ConY_{\infty}
    \end{equation*}
    for all $x,y_1,y_2\in X$ and $\alpha\in [0,1]$.
\end{definition}

Some important consequences of $\ConY_{\infty}$-concavity are formulated in the following lemma. A proof can be found in \cite[Lem.~2.1]{Kanzow2017c} for the case where $\SpaceY$ is a Hilbert space, and the general case is completely analogous.

\begin{lemma}[Properties of $\ConY_{\infty}$-concavity]\label{Lem:GeneralizedConvexity}
    Let $G$ be $\ConY_{\infty}$-concave with respect to $y$, and let $x\in X$. Then \textnormal{(i)} the function $y\mapsto \langle \lambda,G(x,y)\rangle$ is convex for all $\lambda\in\ConY_{\infty}^{\circ}$, \textnormal{(ii)} the function $y\mapsto d_{\ConY}(G(x,y))$ is convex, and \textnormal{(iii)} the set $\Feas(x)$ is convex.
\end{lemma}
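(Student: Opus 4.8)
The plan is to prove the three statements in order, obtaining (iii) as a corollary of (ii).

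Statement (i) is a direct consequence of the definition of $\ConY_{\infty}$-concavity. Fixing $x$, $\lambda\in\ConY_{\infty}^{\circ}$, points $y_1,y_2$, and $\alpha\in[0,1]$, the element $G(x,(1-\alpha)y_1+\alpha y_2)-(1-\alpha)G(x,y_1)-\alpha G(x,y_2)$ lies in $\ConY_{\infty}$ by assumption; applying the functional $\lambda$ and using that $\langle\lambda,z\rangle\le 0$ for every $z\in\ConY_{\infty}$ yields precisely the convexity inequality for $y\mapsto\langle\lambda,G(x,y)\rangle$.

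For (ii) the idea is to view $y\mapsto d_{\ConY}(G(x,y))$ as the composition of the convex map $d_{\ConY}$ with the $\ConY_{\infty}$-concave map $G(x,\cdot)$, and to exploit a monotonicity of $d_{\ConY}$ with respect to the order induced by $\ConY_{\infty}$. The key auxiliary fact to establish first is that $d_{\ConY}$ is \emph{antitone} for this order: if $z_2-z_1\in\ConY_{\infty}$, then $d_{\ConY}(z_2)\le d_{\ConY}(z_1)$. This follows from the defining property $r+\ConY\subseteq\ConY$ of the recession cone. Writing $r:=z_2-z_1\in\ConY_{\infty}$ and choosing, for $\varepsilon>0$, a near-optimal $c\in\ConY$ with $\|z_1-c\|\le d_{\ConY}(z_1)+\varepsilon$, the shifted point $c+r$ again lies in $\ConY$ and satisfies $\|z_2-(c+r)\|=\|z_1-c\|$, whence $d_{\ConY}(z_2)\le d_{\ConY}(z_1)+\varepsilon$; letting $\varepsilon\downarrow 0$ gives the claim. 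I note that this argument uses only a minimizing sequence rather than attainment of the infimum, so no reflexivity or compactness of $\SpaceY$ is needed. With antitonicity in hand, (ii) follows quickly: setting $y_\alpha=(1-\alpha)y_1+\alpha y_2$ and $w=(1-\alpha)G(x,y_1)+\alpha G(x,y_2)$, $\ConY_{\infty}$-concavity gives $G(x,y_\alpha)-w\in\ConY_{\infty}$, so antitonicity yields $d_{\ConY}(G(x,y_\alpha))\le d_{\ConY}(w)$, and the ordinary convexity of $d_{\ConY}$ (valid because $\ConY$ is convex) bounds $d_{\ConY}(w)$ by $(1-\alpha)d_{\ConY}(G(x,y_1))+\alpha d_{\ConY}(G(x,y_2))$. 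Chaining the two inequalities gives convexity of $y\mapsto d_{\ConY}(G(x,y))$.

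Finally, (iii) is an immediate corollary of (ii). Since $\ConY$ is closed, the condition $G(x,y)\in\ConY$ is equivalent to $d_{\ConY}(G(x,y))=0$, hence to $d_{\ConY}(G(x,y))\le 0$, so that $\Feas(x)=\ConX\cap\{y:d_{\ConY}(G(x,y))\le 0\}$ is the intersection of the convex set $\ConX$ with a sublevel set of the convex function from (ii), and is therefore convex. I expect the antitonicity of $d_{\ConY}$ with respect to the $\ConY_{\infty}$-order to be the only genuinely nontrivial step; everything else is bookkeeping, and the whole argument avoids any structural assumptions on $\SpaceY$ beyond those already in force.
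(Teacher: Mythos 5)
Your proof is correct: the antitonicity of $d_{\ConY}$ with respect to the order induced by $\ConY_{\infty}$ (i.e., $z_2-z_1\in\ConY_{\infty}$ implies $d_{\ConY}(z_2)\le d_{\ConY}(z_1)$, proved via the translation property $r+\ConY\subseteq\ConY$ and an $\varepsilon$-optimal point rather than a nearest point) is exactly the key step, and parts (i) and (iii) follow as you describe. Note that the paper itself gives no proof of this lemma, deferring instead to Lemma~2.1 of the cited reference (stated there for Hilbert spaces, with the general case declared ``completely analogous''); your argument is precisely that standard proof written out in a general Banach space $\SpaceY$, and your observation that no attainment of the infimum is needed is what justifies the paper's claim that the Hilbert-space argument carries over.
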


\section{Regularity and Optimality Conditions}\label{Sec:KKT}

The purpose of this section is to provide a formal approach to first-order optimality conditions involving Lagrange multipliers for the QVI. As commonly done in the context of QVIs, we say that a point $x\in X$ is \emph{feasible} if $x\in\Feas (x)$.

The key observation for the first-order optimality conditions is the following: a point $\bar{x}$ solves the QVI if and only if $\bar{d}:=0$ minimizes the function $d\mapsto \langle F(\bar{x}),d\rangle$ over $d\in\TngCone{\Feas(\bar{x})}{\bar{x}}$. Under a suitable constraint qualification (see below), one can show that $\TngCone{\Feas(\bar{x})}{\bar{x}}=\{ d\in \TngCone{\ConX}{\bar{x}}: D_y G(\bar{x},\bar{x})d\in\TngCone{\ConY}{G(\bar{x},\bar{x})} \}$. Hence, this problem reduces to
\begin{equation}\label{Eq:QVItransformed}
    \min{} \langle F(\bar{x}),d\rangle \quad\text{s.t.}\quad d\in \TngCone{\ConX}{\bar{x}},~
    D_y G(\bar{x},\bar{x})d\in \TngCone{\ConY}{G(\bar{x},\bar{x})}.
\end{equation}
The KKT system of the QVI is now obtained by applying the standard theory of KKT conditions to this optimization problem in $\bar{d}=0$. This motivates the definition
\begin{equation}\label{Eq:L}
    \Lag:X\times \SpaceY^* \to X^*, \quad
    \Lag(x,\lambda):=F(x)+D_y G(x,x)^* \lambda,
\end{equation}
as the Lagrange function of the QVI. The corresponding first-order optimality conditions are given as follows.

\begin{definition}[KKT Conditions]\label{Dfn:KKT}
    A tuple $(\bar{x},\bar{\lambda})\in X\times \SpaceY^*$ is a \emph{KKT point} of \eqref{Eq:QVI}, \eqref{Eq:FeasibleSet}, if
\begin{equation}\label{Eq:KKT}
    -\Lag(\bar{x},\bar{\lambda})\in \NorCone{\ConX}{\bar{x}} \quad\text{and}\quad
    \bar{\lambda}\in \NorCone{\ConY}{G(\bar{x},\bar{x})}.
\end{equation}
    We call $\bar{x}$ a \emph{stationary point} if $(\bar{x},\bar{\lambda})$ is a KKT point for some multiplier $\bar{\lambda}\in\SpaceY^*$, and denote by $\Lambda(\bar{x})\subseteq \SpaceY^*$ the set of such multipliers.
\end{definition}

As mentioned before, a constraint qualification is necessary to obtain the assertion that every solution of the QVI admits a Lagrange multiplier. To this end, we apply the standard Robinson constraint qualification to \eqref{Eq:QVItransformed}.

\begin{definition}[Robinson constraint qualification]\label{Dfn:RobinsonCQ}
    Let $x\in X$ be an arbitrary point. We say that $x$ satisfies
\begin{enumerate}[(i)]
\item the \emph{extended Robinson constraint qualification (ERCQ)} if
\begin{equation}\label{Eq:RobinsonCQ}
    0\in \operatorname{int} \bigl[ G(x,x)+D_y G(x,x)(\ConX-x)-\ConY \bigr].
    \vspace{-\parskip}
\end{equation}
\item the \emph{Robinson constraint qualification (RCQ)} if $x$ is feasible and satisfies \eqref{Eq:RobinsonCQ}.
\end{enumerate}
\end{definition}

For nonlinear programming-type constraints, RCQ reduces to the Mangasarian--Fromovitz constraint qualification (MFCQ), see \cite[p.\,71]{Bonnans2000}. The extended RCQ is similar to the so-called \emph{extended MFCQ} which generalizes MFCQ to (possibly) infeasible points.

The connection between the QVI and its KKT conditions is much stronger than it is for optimization problems. The reason behind this is that, in a way, the QVI itself is already a problem formulation tailored towards ``first-order optimality''.

\begin{proposition}\label{Prop:KKT}
    If $(\bar{x},\bar{\lambda})$ is a KKT point, then $\bar{x}$ is a solution of the QVI. Conversely, if $\bar{x}$ is a solution of the QVI and RCQ holds in $\bar{x}$, then there exists a multiplier $\bar{\lambda}\in\SpaceY^*$ such that $(\bar{x},\bar{\lambda})$ is a KKT point, and the corresponding multiplier set is bounded.
\end{proposition}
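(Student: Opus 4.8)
The plan is to prove the two implications separately: the forward direction needs no constraint qualification, whereas the converse rests entirely on RCQ. Throughout I write $g:=G(\bar{x},\bar{x})$ and $A:=D_y G(\bar{x},\bar{x})$, and I observe at the outset that the very meaning of the inclusions $-\Lag(\bar{x},\bar{\lambda})\in\NorCone{\ConX}{\bar{x}}$ and $\bar{\lambda}\in\NorCone{\ConY}{g}$ forces $\bar{x}\in\ConX$ and $g\in\ConY$, so that every KKT point is automatically feasible, $\bar{x}\in\Feas(\bar{x})$.

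For the first implication I would rely on the inclusion $\TngCone{\Feas(\bar{x})}{\bar{x}}\subseteq L$, where $L:=\{d\in\TngCone{\ConX}{\bar{x}}:Ad\in\TngCone{\ConY}{g}\}$, which holds with no regularity hypothesis. Indeed, given $d\in\TngCone{\Feas(\bar{x})}{\bar{x}}$ choose $y^k\in\Feas(\bar{x})$ and $t_k\downarrow 0$ with $(y^k-\bar{x})/t_k\to d$; then $y^k\in\ConX$ gives $d\in\TngCone{\ConX}{\bar{x}}$, and a first-order expansion of $G(\bar{x},\cdot)$ at $\bar{x}$ combined with $G(\bar{x},y^k)\in\ConY$ gives $Ad\in\TngCone{\ConY}{g}$. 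For any such $d$ I would split $\langle F(\bar{x}),d\rangle=\langle\Lag(\bar{x},\bar{\lambda}),d\rangle-\langle\bar{\lambda},Ad\rangle$ and bound the two terms through the polarity relations $\NorCone{\ConX}{\bar{x}}=\TngCone{\ConX}{\bar{x}}^{\circ}$ and $\NorCone{\ConY}{g}=\TngCone{\ConY}{g}^{\circ}$ recorded in \cref{Sec:Prelims}: the first KKT inclusion yields $\langle\Lag(\bar{x},\bar{\lambda}),d\rangle\ge 0$ and the second yields $\langle\bar{\lambda},Ad\rangle\le 0$, so $\langle F(\bar{x}),d\rangle\ge 0$. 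Since $d$ was arbitrary, $\bar{x}$ solves the QVI.

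For the converse I would start from the fact that $\bar{x}$ solving the QVI is precisely the statement $-F(\bar{x})\in\TngCone{\Feas(\bar{x})}{\bar{x}}^{\circ}$. The decisive step is to upgrade the easy inclusion above to the equality $\TngCone{\Feas(\bar{x})}{\bar{x}}=L$; this is Robinson's stability theorem for the constraint system $y\in\ConX$, $G(\bar{x},y)\in\ConY$ under RCQ, which I would import from the perturbation-analysis literature. Dualizing gives $-F(\bar{x})\in L^{\circ}$, and I would then invoke the dual-cone formula $L^{\circ}=\NorCone{\ConX}{\bar{x}}+A^{*}\NorCone{\ConY}{g}$, with no closure on the right-hand side. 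To justify omitting the closure I would check that RCQ implies the surjectivity condition $A\TngCone{\ConX}{\bar{x}}-\TngCone{\ConY}{g}=\SpaceY$: since $\ConX-\bar{x}\subseteq\RadCone{\ConX}{\bar{x}}$ and $\ConY-g\subseteq\RadCone{\ConY}{g}$, the RCQ inclusion $B_{\varepsilon}^{\SpaceY}\subseteq g+A(\ConX-\bar{x})-\ConY$ shows that the convex cone $A\RadCone{\ConX}{\bar{x}}-\RadCone{\ConY}{g}$ contains a neighborhood of the origin and hence equals $\SpaceY$. Reading off the decomposition $-F(\bar{x})=\nu+A^{*}\bar{\lambda}$ with $\nu\in\NorCone{\ConX}{\bar{x}}$ and $\bar{\lambda}\in\NorCone{\ConY}{g}$ reproduces exactly the KKT system. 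For boundedness of $\Lambda(\bar{x})$ I would use the quantitative (metric regularity) form of RCQ: every $v\in B_{\varepsilon}^{\SpaceY}$ admits a representation $v=A(c-\bar{x})-(k-g)$ with $c\in\ConX$, $k\in\ConY$ and $\|c-\bar{x}\|_X\le M\|v\|_{\SpaceY}$; pairing this with the two KKT inclusions gives $|\langle\bar{\lambda},v\rangle|\le\|F(\bar{x})\|_{X^*}M\|v\|_{\SpaceY}$ and hence the uniform bound $\|\bar{\lambda}\|_{\SpaceY^*}\le M\|F(\bar{x})\|_{X^*}$.

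The main obstacle is entirely contained in the converse: the tangent-cone equality $\TngCone{\Feas(\bar{x})}{\bar{x}}=L$ and the associated no-closure dual formula are not elementary in infinite dimensions but rest on the metric regularity of the constraint map, i.e.\ the Robinson--Ursescu/open-mapping machinery. I would cite these from the standard references and keep explicit only the self-contained parts: the easy inclusion, the polarity bookkeeping, and the reduction of RCQ to the surjectivity condition $A\RadCone{\ConX}{\bar{x}}-\RadCone{\ConY}{g}=\SpaceY$.
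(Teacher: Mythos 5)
Your proposal is correct, but it takes a genuinely more hands-on route than the paper on both halves. The paper first absorbs the simple constraint into the analytic one via $\mathsf{G}(x,y):=(y,G(x,y))$ and $\mathsf{K}:=\ConX\times\ConY$, notes that the KKT conditions and RCQ are invariant under this reformulation, and then cites block results: \cite[Thm.~2.4]{Kanzow2017c} for the direction KKT $\Rightarrow$ solution, and, for the converse, \cite[Cor.~2.91]{Bonnans2000} to identify $\TngCone{\Feas(\bar{x})}{\bar{x}}$ with your linearized cone $L$, followed by the standard KKT theorem \cite[Thm.~3.9]{Bonnans2000} applied to \eqref{Eq:QVItransformed}, which delivers multiplier existence \emph{and} boundedness in one stroke. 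You instead prove the forward direction from scratch --- the CQ-free inclusion $\TngCone{\Feas(\bar{x})}{\bar{x}}\subseteq L$ via a first-order expansion, plus the polarity bookkeeping $\NorCone{\ConX}{\bar{x}}=\TngCone{\ConX}{\bar{x}}^{\circ}$, $\NorCone{\ConY}{g}=\TngCone{\ConY}{g}^{\circ}$ --- which is a clean, self-contained replacement for the citation and correctly handles a possibly nonconvex $\Feas(\bar{x})$; and for the converse you essentially re-derive the content of \cite[Thm.~3.9]{Bonnans2000} by hand: tangent-cone equality under RCQ (cited), the exact dual-cone formula $L^{\circ}=\NorCone{\ConX}{\bar{x}}+A^{*}\NorCone{\ConY}{g}$ justified by the surjectivity $A\RadCone{\ConX}{\bar{x}}-\RadCone{\ConY}{g}=\SpaceY$ that you correctly extract from RCQ and feasibility of $\bar{x}$, and an explicit boundedness estimate. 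Two points to make this airtight. First, the no-closure sum formula for $(T_1\cap A^{-1}T_2)^{\circ}$ in a general Banach space is exactly the generalized Farkas/open-mapping machinery you allude to; it must be cited in a form that does not assume reflexivity (the analogues in \cite{Bonnans2000} qualify), since the weak-$^*$ closure cannot be dropped without the interiority condition. Second, your representation $v=A(c-\bar{x})-(k-g)$ with $\|c-\bar{x}\|_X\le M\|v\|_{\SpaceY}$ does not follow from the raw RCQ inclusion alone: it requires the generalized open mapping theorem \cite[Thm.~2.70]{Bonnans2000} to get $B_r^{\SpaceY}\subseteq G(\bar{x},\bar{x})+D_yG(\bar{x},\bar{x})\bigl[(\ConX-\bar{x})\cap B_1^X\bigr]-\ConY$, together with a convex rescaling (the representable set is convex and contains $0$) --- precisely the device the paper itself uses later in the proof of \cref{Thm:Optimality}. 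With that citation supplied, your estimate $\|\bar{\lambda}\|_{\SpaceY^*}\le M\|F(\bar{x})\|_{X^*}$ is valid uniformly over $\Lambda(\bar{x})$ and even makes the boundedness assertion quantitative, which the paper's citation-based argument does not.
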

\begin{proof}
    Throughout the proof, let $\mathsf{G}(x,y):=(y,G(x,y))$ and $\mathsf{\ConY}:=\ConX\times\ConY$, so that
\begin{equation}\label{Eq:PropKKT1}
    \Feas(x)=\{ y\in\ConX: G(x,y)\in\ConY \} =
    \{ y\in X: \mathsf{G}(x,y)\in\mathsf{\ConY} \}
    \quad\text{for all }x\in X.
\end{equation}
    An easy calculation shows that the KKT conditions remain invariant under this reformulation of the constraint system. By \cite[Lem.~2.100]{Bonnans2000}, the same holds for RCQ.

    Now, let $(\bar{x},\bar{\lambda})$ be a KKT point of the QVI. Using \eqref{Eq:PropKKT1} and \cite[Thm.~2.4]{Kanzow2017c}, it follows that $\bar{x}$ is a solution of the variational inequality $\bar{x}\in S$, $\langle F(\bar{x}),d\rangle\ge 0$ for all $d\in \TngCone{S}{\bar{x}}$, where $S:=\Feas(\bar{x})$ is considered fixed. But this means that $\bar{x}$ solves the QVI.
    
    Conversely, if $\bar{x}$ solves the QVI and RCQ holds in $\bar{x}$, then \eqref{Eq:PropKKT1} and \cite[Cor.~2.91]{Bonnans2000} imply that $\TngCone{\Feas(\bar{x})}{\bar{x}}=\{ d\in \TngCone{\ConX}{\bar{x}}: D_y G(\bar{x},\bar{x})d\in\TngCone{\ConY}{G(\bar{x},\bar{x})} \}$, so that $\bar{x}$ is a solution of \eqref{Eq:QVItransformed}. The (ordinary) Robinson constraint qualification \cite[Def.~2.86]{Bonnans2000} for this problem takes on the form
\begin{equation*}
    0\in\operatorname{int}[ D_y G(\bar{x},\bar{x})\TngCone{\ConX}{\bar{x}}-\TngCone{\ConY}{G(\bar{x},\bar{x})}].
\end{equation*}
    Since $\ConX-\bar{x}\subseteq\TngCone{\ConX}{\bar{x}}$ and $\ConY-G(\bar{x},\bar{x})\subseteq \TngCone{\ConY}{G(\bar{x},\bar{x})}$, this condition is implied by \eqref{Eq:RobinsonCQ}. The result now follows by applying a standard KKT theorem to \eqref{Eq:QVItransformed}, such as \cite[Thm.~3.9]{Bonnans2000}.
\end{proof}

We now turn to another consequence of RCQ which is a certain metric regularity of the feasible set(s). This property can also be used to deduce the weak Mosco-continuity of $\Feas$ (see Corollary~\ref{Cor:MoscoRCQ}). The main idea is that, given some reference point $\bar{x}$, we can view the parameter $x$ in the feasible set mapping $\Feas(x)$ as a perturbation parameter and use the following result from perturbation theory.

\begin{lemma}\label{Lem:MetricRegularityRCQ}
    Let $\bar{x}\in X$ and $\bar{y}\in\Feas(\bar{x})$. Assume that $G$ and $D_y G$ are continuous on $U\times X$, where $U$ is the space $X$ equipped with an arbitrary topology, and that
\begin{equation*}
    0\in \operatorname{int} \bigl[ G(\bar{x},\bar{y})+D_y G(\bar{x},\bar{y})(\ConX-\bar{y})-\ConY \bigr].
\end{equation*}
    Then there are $c>0$ and a neighborhood $N$ of $(\bar{x},\bar{y})$ in $U\times X$ such that $\operatorname{dist}(y,\Feas(x))\le c \operatorname{dist}(G(x,y),\ConY)$ for all $(x,y)\in N$ with $y\in \ConX$.
\end{lemma}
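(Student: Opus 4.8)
The plan is to recognize this as a standard metric regularity result from perturbation theory, where the key is to treat $x$ as a perturbation parameter and apply the Robinson--Ursescu-type stability theory for generalized equations. Specifically, I would define the set-valued ``constraint map'' and interpret the feasibility condition $G(x,y) \in \ConY$, together with $y \in \ConX$, as a parametric generalized equation. The hypothesis that $0 \in \operatorname{int}[G(\bar{x},\bar{y}) + D_y G(\bar{x},\bar{y})(\ConX - \bar{y}) - \ConY]$ is precisely the Robinson constraint qualification at the reference point $(\bar{x},\bar{y})$, which is known to be equivalent to metric regularity of the (linearized) constraint system; see \cite[Sec.~2.3.1]{Bonnans2000}.

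First I would fix the reference point and invoke the classical Robinson stability theorem \cite[Thm.~2.87]{Bonnans2000}, which states that under RCQ the multifunction $y \mapsto G(\bar{x},y) - \ConY$ (with $y \in \ConX$) is metrically regular near $(\bar{y},0)$: there exist $c_0 > 0$ and a neighborhood of $(\bar{y},0)$ on which $\dist{y,\Feas(\bar{x})} \le c_0 \dist{G(\bar{x},y),\ConY}$. The crux is then to upgrade this fixed-parameter estimate to a uniform one that holds for all $x$ in a neighborhood of $\bar{x}$ (in the topology of $U$) with the \emph{same} constant $c$, thereby obtaining the stated estimate involving $\Feas(x)$ rather than $\Feas(\bar{x})$. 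This is exactly the content of a parametric (or ``uniform'') metric regularity theorem, and the continuity assumptions on $G$ and $D_y G$ on $U \times X$ are what guarantee the RCQ condition is stable under perturbation of $x$: by continuity of $G(x,\bar{y})$ and $D_y G(x,\bar{y})$, and the openness built into the interior condition, the perturbed linearization $G(x,\bar{y}) + D_y G(x,\bar{y})(\ConX - \bar{y}) - \ConY$ still contains $0$ in its interior for $x$ near $\bar{x}$, with a uniform interior radius.

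The main obstacle I expect is the uniformity in the perturbation parameter. One must verify that the metric regularity modulus does not degenerate as $x \to \bar{x}$. The cleanest route is to apply a parametrized version of Robinson's theorem directly, such as \cite[Thm.~2.84]{Bonnans2000} or the stability results in \cite[Sec.~2.3]{Bonnans2000}, which yield metric regularity of the perturbed system $x \mapsto \{ y \in \ConX : G(x,y) \in \ConY \}$ under continuity of the data and RCQ at the base point; the continuity of $D_y G$ ensures the linearizations converge, and continuity of $G$ ensures the nonlinear remainder is controlled uniformly. I would invoke such a theorem to extract a single constant $c > 0$ and a joint neighborhood $N$ of $(\bar{x},\bar{y})$ in $U \times X$ on which the desired estimate holds.

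Finally, I would note a subtlety worth addressing: the topology on $U$ is arbitrary (in the application it will be the weak sequential topology on $X$), so continuity of $G$ and $D_y G$ must be read with respect to that topology in the first argument. The estimate itself is purely metric in the second variable $y$ and in the value $G(x,y)$, so only the continuity of the data as a function of the parameter $x$ (in $U$) is needed to guarantee stability of the interior condition; the quantitative regularity bound transfers from the classical Banach-space theorem applied in the $y$-variable. This observation is what allows the result to later be combined with weak convergence $x^k \wto \bar{x}$ to establish the weak Mosco-continuity of $\Feas$ in Corollary~\ref{Cor:MoscoRCQ}.
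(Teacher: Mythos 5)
Your overall route is the paper's: treat $x$ as a perturbation parameter living in the arbitrary topological space $U$ and invoke the parametric Robinson stability theorem \cite[Thm.~2.87]{Bonnans2000}. There is, however, one genuine gap. Theorem~2.87 applies to constraint systems of the form $\mathsf{G}(x,y)\in\mathsf{K}$ \emph{without} an abstract side constraint, and with the linearization taken over all of $X$; you apply it directly to the system ``$y\in\ConX$, $G(x,y)\in\ConY$'', whose qualification condition involves $\ConX-\bar{y}$, a situation the theorem as stated does not cover. The paper bridges this by the product-space embedding $\mathsf{G}(x,y):=(y,G(x,y))$, $\mathsf{K}:=\ConX\times\ConY$, so that $\Feas(x)=\{y\in X:\mathsf{G}(x,y)\in\mathsf{K}\}$; it then uses \cite[Lem.~2.100]{Bonnans2000} to show that the stated interior condition (over $\ConX-\bar{y}$) is equivalent to RCQ for the reformulated system (over all of $X$), applies Theorem~2.87 to $\mathsf{G}$, and finally observes that $\operatorname{dist}(\mathsf{G}(x,y),\mathsf{K})=\operatorname{dist}(G(x,y),\ConY)$ whenever $y\in\ConX$ --- which is exactly why the conclusion is restricted to $y\in\ConX$. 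Without some such device, your citation does not reach the stated estimate, and this embedding (together with the RCQ equivalence and the distance identity) is essentially the entire content of the paper's proof beyond the citation.

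A second point is a misdiagnosis rather than an error: your worry about ``upgrading'' a fixed-parameter regularity estimate to one uniform in $x$, and your sketch of re-verifying the interior condition for the perturbed linearizations $G(x,\bar{y})+D_yG(x,\bar{y})(\ConX-\bar{y})-\ConY$, are unnecessary. Theorem~2.87 is already stated for a parameter ranging over an arbitrary topological space and delivers a single constant $c>0$ and a joint neighborhood $N$ of $(\bar{x},\bar{y})$ in $U\times X$ in one stroke; the only hypotheses to check are the continuity of $\mathsf{G}$ and $D_y\mathsf{G}$ on $U\times X$ and RCQ at the base point. This is precisely what makes the lemma usable with $U$ equal to $X$ in the weak sequential topology in Corollary~\ref{Cor:MoscoRCQ}, as you correctly anticipate in your closing remark.
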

\begin{proof}
    Similar to above, let $\mathsf{G}:U\times X\to X\times \SpaceY$ be the mapping $\mathsf{G}(x,y):=(y,G(x,y))$, and define the set $\mathsf{\ConY}:=\ConX\times\ConY$. Observe that $\mathsf{G}$ is Fr\'echet-differentiable with respect to $y$, that both $\mathsf{G}$ and $D_y\mathsf{G}$ are continuous on $U\times X$, and that $\Feas(x)=\{ y\in X: \mathsf{G}(x,y)\in \mathsf{\ConY} \}$ for all $x\in X$. By \cite[Lem.~2.100]{Bonnans2000}, we have $0\in \operatorname{int} [ \mathsf{G}(\bar{x},\bar{y})+D_y \mathsf{G}(\bar{x},\bar{y}) X-\mathsf{\ConY} ]$. Hence, by \cite[Thm.~2.87]{Bonnans2000}, there are $c>0$ and a neighborhood $N$ of $(\bar{x},\bar{y})$ in $U\times X$ such that
\begin{equation*}
    \dist{y,\Feas(x)}\le c \dist{\mathsf{G}(x,y),\mathsf{\ConY}}
\end{equation*}
    for all $(x,y)\in N$. Clearly, if $y\in C$, then $\operatorname{dist}(\mathsf{G}(x,y),\mathsf{\ConY})=\operatorname{dist}(G(x,y),\ConY)$.
\end{proof}

As mentioned before, we can use Lemma~\ref{Lem:MetricRegularityRCQ} to prove the weak Mosco-continuity of $\Feas$. To this end, we only need to apply the lemma in the special case where $U$ is the space $X$ equipped with the weak sequential topology.

\begin{corollary}\label{Cor:MoscoRCQ}
    Let $\Feas(x)=\{ y\in\ConX: G(x,y)\in\ConY \}$ and let $\bar{x}$ be a feasible point. Assume that $G$ is $\ConY_{\infty}$-concave with respect to $y$, that $G$ and $D_y G$ satisfy the continuity property
    \begin{equation*}
        x^k\wto x,\quad y^k\to y \quad\implies\quad G(x^k,y^k)\to G(x,y)
        ,\quad D_y G(x^k,y^k) \to D_y G(x,y)
    \end{equation*}
    for all $x,y\in X$, and that RCQ holds in $\bar{x}$. Then $\Feas$ is weakly Mosco-continuous in $\bar{x}$.
\end{corollary}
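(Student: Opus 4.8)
The plan is to verify the two defining conditions of Mosco convergence (\cref{Dfn:MoscoConvergence}) for the statement $\Feas(x^k)\Mto\Feas(\bar{x})$ whenever $x^k\wto\bar{x}$. Throughout, I would use that $\ConX$ is closed and convex, hence weakly closed, and that each feasible set $\Feas(x)$ is closed and convex, the latter by \cref{Lem:GeneralizedConvexity}.

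For the weak outer semicontinuity (condition (ii)), I would take $y^k\in\Feas(x^k)$ with a weak limit point $y$, pass to a (not relabeled) subsequence with $y^k\wto y$ (along which still $x^k\wto\bar{x}$), and show $y\in\Feas(\bar{x})$. Since $y^k\in\ConX$ and $\ConX$ is weakly closed, $y\in\ConX$, so only $G(\bar{x},y)\in\ConY$ remains. I would prove this by testing against functionals: for every $\lambda$ with $\sup_{w\in\ConY}\langle\lambda,w\rangle<\infty$ (such $\lambda$ necessarily lie in $\ConY_\infty^\circ$), the map $g_k:=\langle\lambda,G(x^k,\cdot)\rangle$ is convex by \cref{Lem:GeneralizedConvexity}(i) and differentiable with gradient $D_y G(x^k,\cdot)^*\lambda$, so the tangent inequality at $y$ gives $g_k(y^k)\ge g_k(y)+\langle D_y G(x^k,y)^*\lambda,\,y^k-y\rangle$. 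The assumed continuity property yields $g_k(y)\to\langle\lambda,G(\bar{x},y)\rangle$ and $D_y G(x^k,y)^*\lambda\to D_y G(\bar{x},y)^*\lambda$ strongly in $X^*$; since $y^k-y\wto 0$ is bounded, the bracket vanishes in the limit, whence $\langle\lambda,G(\bar{x},y)\rangle\le\liminf_k g_k(y^k)\le\sup_{w\in\ConY}\langle\lambda,w\rangle$. As $\lambda$ ranges over all functionals with finite support value, this is exactly $G(\bar{x},y)\in\ConY$. I would stress that the strong convergence of $D_y G(x^k,y)^*\lambda$ is essential here: a naive appeal to weak lower semicontinuity of a \emph{fixed} convex function fails because both the function $g_k$ and the argument $y^k$ vary, and without the derivative continuity the liminf inequality is genuinely false.

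For the complete inner semicontinuity (condition (i)), I would invoke the metric regularity of \cref{Lem:MetricRegularityRCQ} in the weak sequential topology $U$. Given $y\in\Feas(\bar{x})$, the goal is a recovery sequence $y^k\in\Feas(x^k)$ with $y^k\to y$. Assuming for the moment that the hypothesis of \cref{Lem:MetricRegularityRCQ} holds at the base point $(\bar{x},y)$, one obtains $c>0$ and a neighborhood $N$ of $(\bar{x},y)$ in $U\times X$ with $\dist{z,\Feas(x)}\le c\,d_{\ConY}(G(x,z))$ for $(x,z)\in N$, $z\in\ConX$. Since $x^k\to\bar{x}$ in $U$ (weak convergence is precisely convergence in the weak sequential topology) and the second coordinate $y$ is held fixed, $(x^k,y)\in N$ eventually; moreover $G(x^k,y)\to G(\bar{x},y)\in\ConY$ by the continuity property, so $d_{\ConY}(G(x^k,y))\to 0$ and thus $\dist{y,\Feas(x^k)}\to 0$. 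Choosing $y^k\in\Feas(x^k)$ with $\|y^k-y\|_X\le\dist{y,\Feas(x^k)}+1/k$ then produces the required sequence.

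The hard part is the standing assumption just made: \cref{Lem:MetricRegularityRCQ} needs the Robinson-type condition $0\in\operatorname{int}[\,G(\bar{x},y)+D_y G(\bar{x},y)(\ConX-y)-\ConY\,]$ at the base point $(\bar{x},y)$ for \emph{every} $y\in\Feas(\bar{x})$, whereas RCQ supplies it only at $(\bar{x},\bar{x})$. The crucial step is therefore to propagate the condition along $\Feas(\bar{x})$ using $\ConY_\infty$-concavity. This is the infinite-dimensional analogue of the classical fact that, for convex inequality systems, MFCQ at a single feasible point yields a Slater point and hence MFCQ at every feasible point. Concretely, the first-order concavity estimate $G(\bar{x},\bar{x})+D_y G(\bar{x},\bar{x})(y-\bar{x})-G(\bar{x},y)\in\ConY_\infty$ (together with its counterpart controlling $D_y G(\bar{x},y)$ against $D_y G(\bar{x},\bar{x})$) links the linearizations at $\bar{x}$ and at $y$ and transports the interior in RCQ to the set defining the condition at $y$; some care is needed when $\ConY$ has empty interior, where one argues directly with the Robinson set rather than through a Slater point. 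Once this propagation is established, the recovery construction above applies at every $y\in\Feas(\bar{x})$, conditions (i) and (ii) both hold, and $\Feas$ is weakly Mosco-continuous in $\bar{x}$.
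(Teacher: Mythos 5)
Your proposal is correct in outline and follows essentially the same route as the paper's proof: weak outer semicontinuity from the convexity structure of $G(\bar{x},\cdot)$, and complete inner semicontinuity via \cref{Lem:MetricRegularityRCQ} applied with $U$ equal to $X$ with the weak sequential topology, after propagating RCQ from $\bar{x}$ to every point of $\Feas(\bar{x})$ by convexity. Two remarks on where the details differ. First, the propagation step that you rightly single out as the hard part is discharged in the paper by reducing to $\ConX=X$ (as in the proof of \cref{Prop:KKT}) and citing \cite[Thms.~2.83 and 2.104]{Bonnans2000}, which assert that for $\ConY_{\infty}$-concave systems Robinson's CQ at one feasible point is equivalent to a point-independent condition and hence holds at every feasible point. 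Be aware that your sketched mechanism, the first-order concavity estimate, by itself only yields the easy half: concavity gives $G(\bar{x},y)+D_y G(\bar{x},y)(z-y)-G(\bar{x},z)\in\ConY_{\infty}$ for all $z$, whence $G(\bar{x},z)-w=G(\bar{x},y)+D_yG(\bar{x},y)(z-y)-(v+w)$ with $v+w\in\ConY_{\infty}+\ConY\subseteq\ConY$, so the set $\bigcup_{z\in\ConX}\bigl(G(\bar{x},z)-\ConY\bigr)$ is contained in the linearized Robinson set at \emph{every} $y$. The hard half --- that RCQ at $\bar{x}$ forces $0$ to be interior to that point-independent set --- does not follow from the concavity overestimate (which bounds $G$ by its linearization in the wrong direction) but requires the exact first-order expansion of $G(\bar{x},\cdot)$ at $\bar{x}$ together with an open-mapping-type argument; this is precisely what the cited theorems supply, so your deferred step is genuine but standard, not a flaw in the route. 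Second, your outer-semicontinuity argument (testing with $\lambda\in\ConY_{\infty}^{\circ}$ having finite supremum on $\ConY$, the subgradient inequality for $\langle\lambda,G(x^k,\cdot)\rangle$, strong convergence of $D_yG(x^k,\bar{y})^*\lambda$ paired against $y^k-\bar{y}\wto 0$, then separation) is correct and is actually \emph{more} detailed than the paper, which asserts that $G(x^k,y^k)\in\ConY$ implies $G(\bar{x},\bar{y})\in\ConY$ without comment; since the assumed weak-strong continuity of $G$ does not apply to merely weakly convergent $y^k$, your use of convexity together with the continuity of $D_yG$ is exactly what is needed to justify that step.
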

\begin{proof}
    Let $x^k\wto \bar{x}$ and $y^k\in \Feas(x^k)$, $y^k\wto \bar{y}$. Then $\{y^k\}\subseteq \ConX$, which implies $\bar{y}\in\ConX$. Moreover, $G(x^k,y^k)\in\ConY$ for all $k$, which implies $G(\bar{x},\bar{y})\in\ConY$ and $\bar{y}\in\Feas(\bar{x})$.
    
    For the inner semicontinuity, let $x^k\wto \bar{x}$ and $\bar{y}\in \Feas(\bar{x})$. As in the proof of Proposition~\ref{Prop:KKT}, we may assume that $\ConX=X$. By assumption, the constraint system $y\in\ConX$, $G(\bar{x},y)\in\ConY$ satisfies the (ordinary) RCQ in $\bar{x}$; thus, by convexity, this constraint system satisfies RCQ in every $y\in \Feas(\bar{x})$ (see, e.g., \cite[Theorems 2.83 and 2.104]{Bonnans2000}), in particular for $y:=\bar{y}$. Now, let $U$ denote the space $X$ equipped with the weak sequential topology. Then $G$ and $D_y G$ are continuous on $U\times X$. By Lemma~\ref{Lem:MetricRegularityRCQ}, there exists $c>0$ such that
\begin{equation*}
    \dist{\bar{y},\Feas(x^k)}\le c \dist{G(x^k,\bar{y}),\ConY}
\end{equation*}
    for $k\in\N$ sufficiently large. Since $G(x^k,\bar{y})\to G(\bar{x},\bar{y})\in\ConY$ by assumption, it follows that the right-hand side converges to zero as $k\to\infty$. Hence, we can choose points $y^k\in \Feas(x^k)$ with $\|y^k-\bar{y}\|_X \to 0$. This completes the proof.
\end{proof}

\section{An Existence Result for QVIs}\label{Sec:Existence}

The existence of solutions to QVIs is a rather delicate topic. Many results, especially for infinite-dimensional problems, either deal with specific problem settings \cite{Adly2010,Kunze2000} or consider general QVIs under rather long lists of assumptions, often including monotonicity \cite{Baiocchi1984,Flores2000,Mosco1976}. Two interesting exceptions are the papers \cite{Kim1988,Shih1985}, which deal with quite general classes of QVIs and prove existence results under suitable compactness and continuity assumptions. However, the results contained in these papers actually require the \emph{complete continuity} of the mapping $F$. This is a very restrictive assumption which cannot be expected to hold in many applications; for instance, it does not even hold if $X$ is a Hilbert space and $F(x)=x$. An analogous comment applies if $F$ involves additional summands, e.g., if $F$ arises from the derivative of an optimal control-type objective function with a Tikhonov regularization parameter.

In this paper, we pursue a different approach which is based on a combination of the weak Mosco-continuity from Section~\ref{Sec:PrelimsWeakTop} and the Brezis-type pseudomonotonicity from Section~\ref{Sec:PrelimsPseudo}. A particularly intuitive idea is given by Proposition~\ref{Prop:PseudomonStability}, which suggests that we can tackle the QVI by solving a sequence of approximating problems and then using a suitable limiting argument to obtain a solution of the problem of interest. For the precise implementation of this idea, we will need some auxiliary results.

The first result we need is a slight modification of an existence theorem of Brezis, Nirenberg, and Stampacchia for VI-type equilibrium problems, see \cite[Thm.~1]{Brezis1972}. Note that the result in \cite{Brezis1972} is formulated in a rather general setting and uses filters instead of sequences; however, when applied to the Banach space setting, one can dispense with filters by using, for instance, Day's lemma \cite[Lem.~2.8.5]{Megginson1998}. We shall not demonstrate the resulting proof here, mainly for the sake of brevity and since it is basically identical to that given in \cite{Brezis1972}. The interested reader will also find the complete proof in the dissertation \cite{Steck2018}.

\begin{proposition}[Brezis--Nirenberg--Stampacchia]\label{Prop:BrezisNirenbergStampacchia}
    Let $A\subseteq X$ be a nonempty, convex, weakly compact set, and $\Psi:A\times A\to\R$ a mapping such that
\begin{enumerate}[(i)]
\item $\Psi(x,x)\le 0$ for all $x\in A$,
\item for every $x\in A$, the function $\Psi(x,\cdot)$ is (quasi-)concave,
\item for every $y\in A$ and every finite-dimensional subspace $L$ of $X$, the function $\Psi(\cdot,y)$ is lower semicontinuous on $A\cap L$, and
\item whenever $x,y\in A$, $\{x^k\}\subseteq A$ converges weakly to $x$, and $\Psi(x^k,(1-t)x+t y)\le 0$ for all $t\in [0,1]$ and $k\in\N$, then $\Psi(x,y)\le 0$.
\end{enumerate}
    Then there exists $\hat{x}\in A$ such that $\Psi(\hat{x},y)\le 0$ for all $y\in A$.
\end{proposition}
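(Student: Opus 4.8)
The plan is to follow the classical route to Ky Fan--type inequalities: first solve the problem on every finite-dimensional slice of $A$ by means of a Knaster--Kuratowski--Mazurkiewicz (KKM) argument, and then glue these finite-dimensional solutions into a single point $\hat{x}$ by exploiting the weak compactness of $A$ together with condition~(iv). The point of this two-stage structure is that hypotheses (i)--(iii) are only ever invoked on finite-dimensional subspaces, which is precisely the regularity they guarantee; the weak limiting step, where no such regularity is available, is handled solely by~(iv).

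For the finite-dimensional step, I fix a finite-dimensional subspace $L\subseteq X$ with $A_L:=A\cap L\neq\emptyset$. Then $A_L$ is a nonempty convex compact subset of $L$ (on which weak and norm topologies coincide). For $y\in A_L$ put $S_L(y):=\{x\in A_L:\Psi(x,y)\le 0\}$. By~(iii) the map $\Psi(\cdot,y)$ is lower semicontinuous on $A_L$, so each $S_L(y)$ is closed, hence compact. I claim that $\{S_L(y)\}_{y\in A_L}$ is a KKM family, i.e.\ $\operatorname{conv}\{y_1,\dots,y_n\}\subseteq\bigcup_i S_L(y_i)$ for any $y_1,\dots,y_n\in A_L$: if some $x=\sum_i\lambda_i y_i$ satisfied $\Psi(x,y_i)>0$ for all $i$, then the quasiconcavity of $\Psi(x,\cdot)$ from~(ii) would give $\Psi(x,x)=\Psi\bigl(x,\sum_i\lambda_i y_i\bigr)\ge\min_i\Psi(x,y_i)>0$, contradicting~(i). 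The KKM lemma then yields $x_L\in\bigcap_{y\in A_L}S_L(y)$, that is, $x_L\in A_L$ with $\Psi(x_L,y)\le 0$ for all $y\in A_L$.

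To globalize, for each finite-dimensional $L$ with $A\cap L\neq\emptyset$ I set $B_L:=\{x_{L'}:L'\supseteq L,\ L'\text{ finite-dimensional}\}$ and consider its weak closure $\overline{B_L}^{\,w}$, a weakly closed (hence weakly compact) subset of $A$. These sets enjoy the finite intersection property, since for $L_1,\dots,L_m$ the subspace $M:=L_1+\dots+L_m$ meets $A$ (as $A\cap M\supseteq A\cap L_1\neq\emptyset$) and $x_M\in B_{L_i}$ for every $i$. By the weak compactness of $A$ there is thus $\hat{x}\in\bigcap_L\overline{B_L}^{\,w}$. Now I fix an arbitrary $y\in A$ and set $L_0:=\operatorname{span}\{\hat{x},y\}$. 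Since $\hat{x}\in\overline{B_{L_0}}^{\,w}$ and $B_{L_0}\subseteq A$ is relatively weakly compact, the Eberlein--\v{S}mulian theorem (equivalently, Day's lemma \cite{Megginson1998}) furnishes a sequence $x^k\in B_{L_0}$, say $x^k=x_{L_k}$ with $L_k\supseteq L_0$, such that $x^k\wto\hat{x}$. For every $t\in[0,1]$ the point $(1-t)\hat{x}+ty$ lies in $A\cap L_0\subseteq A_{L_k}$, so the defining property of $x_{L_k}$ gives $\Psi\bigl(x^k,(1-t)\hat{x}+ty\bigr)\le 0$ for all $t\in[0,1]$ and all $k$. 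Condition~(iv), applied with $x=\hat{x}$, now yields $\Psi(\hat{x},y)\le 0$; as $y\in A$ was arbitrary, $\hat{x}$ is the desired point.

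The main obstacle is exactly the passage from the finite-dimensional solutions to the global point $\hat{x}$: the sublevel sets $\{x:\Psi(x,y)\le 0\}$ need not be weakly closed, which is why one cannot run a KKM argument directly on all of $A$, and, more delicately, why the weak cluster point produced by the finite-intersection argument must be realized as a genuine weakly convergent \emph{sequence} in order to match the sequential formulation of~(iv). The Eberlein--\v{S}mulian theorem (or Day's lemma) is the tool that bridges this gap, and it is available precisely because $A$, being weakly compact, is weakly sequentially compact with relatively weakly compact subsets. The remaining verifications (compactness of $A_L$, closedness of the $S_L(y)$, and the finite intersection property) are routine.
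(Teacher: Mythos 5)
Your proof is correct, and it is essentially the argument the paper itself defers to: the paper cites the original proof of Brezis--Nirenberg--Stampacchia \cite{Brezis1972} and remarks that the filter-based limiting step can be replaced by a sequential one via Day's lemma \cite[Lem.~2.8.5]{Megginson1998}, which is precisely what you carry out (finite-dimensional KKM step, finite-intersection-property cluster point over the net of finite-dimensional subspaces, then Day's lemma to realize the cluster point as a weak sequential limit so that hypothesis (iv) applies along the segment $[\hat{x},y]\subseteq L_0$). The reconstruction, including the choice of $L_0=\operatorname{span}\{\hat{x},y\}$ so that the whole segment lies in every $L_k\supseteq L_0$, matches the referenced proof faithfully.
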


We will mainly need Proposition~\ref{Prop:BrezisNirenbergStampacchia} to obtain the existence of solutions to the approximating problems in our existence result for QVIs. For this purpose, it will be useful to present a slightly more tangible corollary of the result.

\begin{corollary}\label{Cor:ExistenceBNS}
    Let $A\subseteq X$ be a nonempty, convex, weakly compact set, $F:X\to X^*$ a bounded pseudomonotone operator, and $\varphi:A^2\to\R$ a mapping such that
\begin{enumerate}[(i)]
\item $\varphi(x,x)=0$ for all $x\in A$,
\item for every $x\in A$, the function $\varphi(x,\cdot)$ is concave, and
\item for every $y\in A$, the function $\varphi(\cdot,y)$ is weakly sequentially lsc.
\end{enumerate}
    Then there exists $\hat{x}\in A$ such that $\langle F(\hat{x}),\hat{x}-y \rangle + \varphi (\hat{x},y)\le 0$ for all $y\in A$.
\end{corollary}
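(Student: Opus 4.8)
The plan is to deduce Corollary~\ref{Cor:ExistenceBNS} from Proposition~\ref{Prop:BrezisNirenbergStampacchia} by making the obvious choice of $\Psi$ and verifying its four hypotheses. Specifically, I would set
\[
    \Psi(x,y):=\langle F(x),x-y\rangle+\varphi(x,y),
    \qquad x,y\in A,
\]
so that the conclusion $\Psi(\hat x,y)\le 0$ for all $y\in A$ is exactly the assertion we want. The work then reduces to checking conditions (i)--(iv) of the proposition for this $\Psi$.

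Conditions (i) and (ii) are immediate. For (i), $\Psi(x,x)=\langle F(x),0\rangle+\varphi(x,x)=0\le 0$ by hypothesis (i) of the corollary. For (ii), the map $y\mapsto\langle F(x),x-y\rangle$ is affine, hence concave, and $\varphi(x,\cdot)$ is concave by hypothesis (ii), so their sum $\Psi(x,\cdot)$ is concave. Condition (iii) of the proposition requires lower semicontinuity of $\Psi(\cdot,y)$ on finite-dimensional sections $A\cap L$; here I would invoke Lemma~\ref{Lem:PseudoProperties}, which gives that a bounded pseudomonotone $F$ is demicontinuous and, on a finite-dimensional space, continuous. Thus on $A\cap L$ the map $x\mapsto\langle F(x),x-y\rangle$ is continuous, and adding the weakly sequentially lsc (hence lsc on $A\cap L$, where strong and weak convergence coincide) function $\varphi(\cdot,y)$ yields lower semicontinuity of $\Psi(\cdot,y)$.

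The substantive step is condition (iv), and this is where the pseudomonotonicity of $F$ must be used in an essential way. Assume $x^k\wto x$ in $A$ with $\Psi(x^k,(1-t)x+ty)\le 0$ for all $t\in[0,1]$ and all $k$; I must conclude $\Psi(x,y)\le 0$. Taking $t=0$ gives $\langle F(x^k),x^k-x\rangle+\varphi(x^k,x)\le 0$, and since $x^k\wto x$ forces $\varphi(x^k,x)\ge\varphi(x,x)+o(1)=o(1)$ by the weak sequential lower semicontinuity of $\varphi(\cdot,x)$, I obtain $\limsup_k\langle F(x^k),x^k-x\rangle\le 0$. This is precisely the trigger hypothesis of pseudomonotonicity (Definition~\ref{Dfn:Pseudomonotone}), which then yields
\[
    \langle F(x),x-z\rangle\le\liminf_{k\to\infty}\langle F(x^k),x^k-z\rangle
    \qquad\text{for all }z\in X.
\]
I expect the delicate point to be correctly combining this variational inequality with the $\varphi$-term to close the argument, and I anticipate that the full strength of the assumption --- the inequality holding for \emph{all} $t\in[0,1]$, not merely $t=0$ --- will be needed. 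Applying the pseudomonotonicity estimate with $z=y$ and using the weak sequential lower semicontinuity of $\varphi(\cdot,y)$ gives
\[
    \Psi(x,y)=\langle F(x),x-y\rangle+\varphi(x,y)
    \le\liminf_{k\to\infty}\bigl[\langle F(x^k),x^k-y\rangle+\varphi(x^k,y)\bigr]
    =\liminf_{k\to\infty}\Psi(x^k,y)\le 0,
\]
where the final inequality is the $t=1$ instance of the hypothesis; this verifies (iv). With all four conditions established, Proposition~\ref{Prop:BrezisNirenbergStampacchia} delivers the desired $\hat x\in A$, completing the proof.
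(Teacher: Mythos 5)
Your proposal is correct and takes essentially the same route as the paper's proof: the same bifunction $\Psi(x,y)=\langle F(x),x-y\rangle+\varphi(x,y)$, conditions (i)--(iii) of Proposition~\ref{Prop:BrezisNirenbergStampacchia} verified via Lemma~\ref{Lem:PseudoProperties}, and condition (iv) closed by triggering pseudomonotonicity with the $t=0$ instance and finishing with the $t=1$ instance together with the weak sequential lower semicontinuity of $\varphi(\cdot,y)$. The only cosmetic discrepancy is your anticipation that all $t\in[0,1]$ would be needed; in fact only the endpoints $t=0$ and $t=1$ enter, exactly as in the paper.
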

\begin{proof}
    We claim that the mapping $\Psi:A^2\to\R$, $\Psi(x,y):=\langle F(x),x-y \rangle+\varphi(x,y)$, satisfies the assumption of Proposition~\ref{Prop:BrezisNirenbergStampacchia}. Clearly, $\Psi(x,x)\le 0$ for every $x\in A$, and $\Psi$ is (quasi-)concave with respect to the second argument. Moreover, by the properties of pseudomonotone operators (\cref{Lem:PseudoProperties}), $\Psi$ is lower semicontinuous with respect to the first argument on $A\cap L$ for any finite-dimensional subspace $L$ of $X$. Finally, let $x,y\in A$, let $\{x^k\}\subseteq A$ be a sequence converging weakly to $x$, and assume that
\begin{equation}\label{Eq:CorExistenceBNS1}
    \Psi(x^k,(1-t)x+t y)\le 0 \quad \forall t\in [0,1],~\forall k\in\N.
\end{equation}
    We need to show that $\Psi(x,y)\le 0$. By \eqref{Eq:CorExistenceBNS1}, we have in particular that $\Psi(x^k,x)\le 0$ and $\Psi(x^k,y)\le 0$ for all $k$. The first of these conditions implies that
\begin{align*}
    0\ge \limsup_{k\to\infty} \Psi (x^k,x)
    & \ge \limsup_{k\to\infty} \big\langle F(x^k),x^k-x \big\rangle + \liminf_{k\to\infty} \varphi(x^k,x) \\
    & \ge \limsup_{k\to\infty} \big\langle F(x^k),x^k-x \bigr\rangle,
\end{align*}
where we used the weak sequential lower semicontinuity of $\varphi$ with respect to $x$ and the fact that $\varphi(x,x)=0$. Hence, by the pseudomonotonicity of $F$, we obtain
\begin{align*}
    \Psi(x,y) & =\langle F(x),x-y \rangle+\varphi(x,y) \\
    & \le \liminf_{k\to\infty} \big[ \bigl\langle F(x^k),x^k-y \bigr\rangle
    + \varphi(x^k,y) \big] = \liminf_{k\to\infty} \Psi(x^k,y) \le 0.
\end{align*}
Therefore, $\Psi$ satisfies all the requirements of Proposition~\ref{Prop:BrezisNirenbergStampacchia}, and the result follows.
\end{proof}

Apart from the above result, we will also need some information on the behavior of the ``parametric'' distance function $(x,y)\mapsto d_{\Feas(x)}(y)$. Here, the weak Mosco-continuity of $\Feas$ plays a key role and allows us to prove the following lemma.

\begin{lemma}\label{Lem:MoscoSemicontinuity}
    Let $\Feas:X\rightrightarrows X$ be weakly Mosco-continuous. Then, for every $y\in X$, the distance function $x\mapsto d_{\Feas(x)}(y)$ is weakly sequentially upper semicontinuous on $X$.
    
    If, in addition, there are nonempty subsets $A,B\subseteq X$ such that $\Feas(A)\subseteq B$ and $B$ is weakly compact, then the function $x\mapsto d_{\Feas(x)}(x)$ is weakly sequentially lsc on $A$.
\end{lemma}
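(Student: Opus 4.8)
The plan is to prove the two assertions separately, each relying on one half of \cref{Dfn:MoscoConvergence}: the upper semicontinuity statement uses the inner semicontinuity condition (i), while the lower semicontinuity statement uses the outer semicontinuity condition (ii) together with the weak compactness of $B$.

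For the upper semicontinuity, fix $y\in X$ and let $x^k\wto x$, so that $\Feas(x^k)\Mto\Feas(x)$ by weak Mosco-continuity. If $\Feas(x)=\emptyset$, then the inequality $\limsup_k d_{\Feas(x^k)}(y)\le d_{\Feas(x)}(y)=+\infty$ is trivial, so assume $\Feas(x)\ne\emptyset$ and fix $\varepsilon>0$. I would choose $z\in\Feas(x)$ with $\|y-z\|_X\le d_{\Feas(x)}(y)+\varepsilon$ and invoke condition (i) to obtain a sequence $z^k\in\Feas(x^k)$ with $z^k\to z$. Then $d_{\Feas(x^k)}(y)\le\|y-z^k\|_X\to\|y-z\|_X\le d_{\Feas(x)}(y)+\varepsilon$, whence $\limsup_k d_{\Feas(x^k)}(y)\le d_{\Feas(x)}(y)+\varepsilon$; letting $\varepsilon\downarrow 0$ finishes this part.

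For the lower semicontinuity on $A$, let $x^k\wto x$ with $x^k,x\in A$ and write $f(x):=d_{\Feas(x)}(x)$. I would first pass to a subsequence along which $f(x^k)\to\liminf_k f(x^k)=:\ell$. Since each $\Feas(x^k)$ is weakly closed (a consequence of weak Mosco-continuity noted after \cref{Dfn:MoscoContinuity}) and contained in the weakly compact set $B$, it is itself weakly compact, so the weak lower semicontinuity of the norm guarantees that $f(x^k)$ is attained at some $z^k\in\Feas(x^k)$; alternatively one may take near-minimizers with $\|x^k-z^k\|_X\le f(x^k)+1/k$. Because $z^k\in\Feas(A)\subseteq B$ and $B$ is weakly compact, the Eberlein--\v{S}mulian theorem yields a further subsequence with $z^k\wto z$ for some $z\in B$. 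Along this subsequence $x^k\wto x$ still holds, so condition (ii) of the Mosco convergence $\Feas(x^k)\Mto\Feas(x)$ gives $z\in\Feas(x)$. Finally, $x^k-z^k\wto x-z$, and the weak lower semicontinuity of the norm yields $f(x)\le\|x-z\|_X\le\liminf_k\|x^k-z^k\|_X\le\ell$, which is the desired inequality.

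The first assertion is routine, being a direct application of inner semicontinuity. The crux lies in the second assertion: one must extract a weakly convergent subsequence of the (near-)minimizers $z^k$, and this is precisely where the weak compactness of $B$---converted into weak sequential compactness via Eberlein--\v{S}mulian---is indispensable, as no individual set $\Feas(x^k)$ need be compact without the hypothesis $\Feas(A)\subseteq B$. Some care is also needed in ordering the two subsequence extractions so that the final estimate lands exactly at $\ell=\liminf_k f(x^k)$ rather than at a larger limit point.
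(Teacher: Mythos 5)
Your proof is correct and takes essentially the same route as the paper's: inner semicontinuity applied to a (near-)minimizer for the weak sequential upper semicontinuity, and for the second part a subsequence realizing the liminf, near-minimizers $z^k\in\Feas(x^k)\subseteq B$, weak sequential compactness of $B$, outer semicontinuity to get $z\in\Feas(x)$, and weak lower semicontinuity of the norm. The only differences are cosmetic: you make explicit the Eberlein--\v{S}mulian step and the degenerate cases (empty $\Feas(x)$, attainment of the infimum), which the paper handles implicitly or by convention.
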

\begin{proof}
    Let $y\in X$ be a fixed point and let $\{x^k\}\subseteq X$, $x^k\wto x\in X$. If $w\in \Feas(x)$ is an arbitrary point, then there is a sequence $w^k\in \Feas(x^k)$ such that $w^k\to w$. It follows that
\begin{equation*}
    \|y-w\|_X=\lim_{k\to\infty} \|y-w^k\|_X \ge
    \limsup_{k\to\infty} d_{\Feas(x^k)}(y).
\end{equation*}
    Since $w\in\Feas(x)$ was arbitrary, this implies that $d_{\Feas(x)}(y)\ge \limsup_{k\to\infty} d_{\Feas(x^k)}(y)$.
    
    We now prove the second assertion. Let $\{x^k\}\subseteq A$ be a sequence with $x^k\wto x\in A$. Without loss of generality, let $d_{\Feas(x^k)}(x^k)\to \liminf_{k\to\infty} d_{\Feas(x^k)}(x^k)$, and let $\Feas(x^k)$ be nonempty for all $k$. Choose points $w^k\in\Feas(x^k)$ such that $\|x^k-w^k\|_X\le d_{\Feas(x^k)}(x^k)+1/k$. By assumption, the sequence $\{w^k\}$ is contained in the weakly compact set $B$, and thus there is an index set $\Subseq\subseteq \N$ such that $w^k\wto_{\Subseq}w$ for some $w\in B$. Since $w^k\in\Feas(x^k)$ for all $k$, the weak Mosco-continuity of $\Feas$ implies $w\in\Feas(x)$. It follows that
\begin{equation*}
    d_{\Feas(x)}(x)\le \|x-w\|_X \le
    \liminf_{k\in\Subseq} \|x^k-w^k\|_X =
    \liminf_{k\to\infty} d_{\Feas(x^k)}(x^k).
\end{equation*}
    This completes the proof.
\end{proof}


We now turn to the main existence result for QVIs.

\begin{theorem}\label{Thm:Existence}
    Consider a QVI of the form \eqref{Eq:QVI}. Assume that
\begin{enumerate*}[(i)]
\item $F$ is bounded and pseudomonotone,
\item $\Feas$ is weakly Mosco-continuous, and
\item there is a nonempty, convex, weakly compact set $A\subseteq X$ such that, for all $x\in A$, $\Feas(x)$ is nonempty, closed, convex, and contained in $A$.
\end{enumerate*}
    Then the QVI admits a solution $\bar{x}\in A$.
\end{theorem}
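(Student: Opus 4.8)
The plan is to replace the QVI by a sequence of penalized equilibrium problems that fit the template of \cref{Cor:ExistenceBNS}, solve each of them, and then pass to the weak limit by means of \cref{Prop:PseudomonStability}. The penalty is chosen so as to drive the iterates toward self-feasibility while leaving the operator $F$ untouched, and it couples to the moving set $\Feas(x^k)$ only through the parametric distance function, which is exactly the object controlled by \cref{Lem:MoscoSemicontinuity}.

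First I would introduce, for each $k\in\N$, the penalty functional $\varphi_k:A^2\to\R$ given by $\varphi_k(x,y):=k\bigl[\,d_{\Feas(x)}(x)-d_{\Feas(x)}(y)\,\bigr]$, and check that it satisfies the three hypotheses of \cref{Cor:ExistenceBNS}. Condition~(i) is immediate since $\varphi_k(x,x)=0$. Condition~(ii) holds because $d_{\Feas(x)}(\cdot)$ is convex (it is the distance to the convex set $\Feas(x)$), so $-d_{\Feas(x)}(\cdot)$, and hence $\varphi_k(x,\cdot)$, is concave. Condition~(iii) is where the weak Mosco-continuity enters: the term $x\mapsto k\,d_{\Feas(x)}(x)$ is weakly sequentially lsc by the second part of \cref{Lem:MoscoSemicontinuity} (applied with $B:=A$, which is admissible since $\Feas(A)\subseteq A$ and $A$ is weakly compact), while $x\mapsto -k\,d_{\Feas(x)}(y)$ is weakly sequentially lsc because $x\mapsto d_{\Feas(x)}(y)$ is weakly sequentially usc by the first part of the same lemma. \cref{Cor:ExistenceBNS} then yields a point $x^k\in A$ with
\begin{equation*}
    \langle F(x^k),x^k-y\rangle + k\bigl[\,d_{\Feas(x^k)}(x^k)-d_{\Feas(x^k)}(y)\,\bigr]\le 0\quad\forall y\in A.
\end{equation*}
Next I would establish asymptotic feasibility. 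Writing $r_k:=d_{\Feas(x^k)}(x^k)$ and picking near-optimal points $q^k\in\Feas(x^k)$ (nonempty by assumption~(iii)) with $\|x^k-q^k\|_X\le r_k+k^{-2}$, insertion of $y=q^k$ into the penalized inequality and $d_{\Feas(x^k)}(q^k)=0$ give $k r_k\le\langle F(x^k),q^k-x^k\rangle\le\|F(x^k)\|_{X^*}(r_k+k^{-2})$. Since $A$ is bounded and $F$ is bounded, $M:=\sup_{x\in A}\|F(x)\|_{X^*}<\infty$, so $(k-M)r_k\le M k^{-2}$ and therefore both $r_k\to 0$ and $k r_k\to 0$.

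Finally, using that $A$ is weakly compact (hence weakly sequentially compact), I would extract a subsequence with $x^k\wto\bar x\in A$. The lower semicontinuity in \cref{Lem:MoscoSemicontinuity} gives $d_{\Feas(\bar x)}(\bar x)\le\liminf_k r_k=0$, so $\bar x\in\Feas(\bar x)$ because $\Feas(\bar x)$ is closed. Moreover, inserting an arbitrary $y\in\Feas(x^k)$ into the penalized inequality (so that $d_{\Feas(x^k)}(y)=0$) yields $\langle F(x^k),y-x^k\rangle\ge k r_k$, which is precisely estimate \eqref{Eq:PropPseudomonStability1} with $\delta_k:=k r_k\to 0$ and $\varepsilon_k:=0$. \cref{Prop:PseudomonStability} then shows that $\bar x$ solves the QVI. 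The hard part is the verification of hypothesis~(iii) of \cref{Cor:ExistenceBNS}: the weak sequential lower semicontinuity of the $x$-dependence of $\varphi_k$ is exactly the point at which the weak Mosco-continuity of $\Feas$ is indispensable, and it is here that the containment $\Feas(A)\subseteq A$ from assumption~(iii) is needed to make the relevant distance function well-behaved. Once this is secured, the feasibility estimate and the passage to the limit are comparatively routine, relying only on the boundedness of $F$ and the stability result in \cref{Prop:PseudomonStability}.
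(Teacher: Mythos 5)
Your proposal is correct and follows essentially the same route as the paper's own proof: the same penalized bifunction $\Psi_k(x,y)=\langle F(x),x-y\rangle+k\bigl[d_{\Feas(x)}(x)-d_{\Feas(x)}(y)\bigr]$ handled via \cref{Cor:ExistenceBNS} and \cref{Lem:MoscoSemicontinuity}, followed by the limiting argument of \cref{Prop:PseudomonStability}. The only difference is a harmless extra refinement: you prove $k\,d_{\Feas(x^k)}(x^k)\to 0$ via near-optimal points $q^k$, whereas the paper simply notes $\langle F(x^k),y-x^k\rangle\ge k\,d_{\Feas(x^k)}(x^k)\ge 0$ for $y\in\Feas(x^k)$ and applies \cref{Prop:PseudomonStability} with $\delta_k=\varepsilon_k=0$, needing only $d_{\Feas(x^k)}(x^k)\to 0$ for feasibility.
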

\begin{proof}
    For $k\in\N$, let $\Psi_k:A^2\to\R$ be the bifunction
\begin{equation*}
    \Psi_k(x,y):=\langle F(x),x-y \rangle+
    k \bigl[ d_{\Feas(x)}(x)-d_{\Feas(x)}(y)  \bigr].
\end{equation*}
    By \cref{Lem:MoscoSemicontinuity,Cor:ExistenceBNS}, there exist points $x^k\in A$ such that $\Psi_k(x^k,y)\le 0$ for all $y\in A$. Since $A$ is weakly compact, the sequence $\{x^k\}$ admits a weak limit point $\bar{x}\in A$. Moreover, by assumption, there are points $y^k\in\Feas(x^k)\subseteq A$ for all $k$. For these points, we obtain
\begin{equation*}
    0\ge \Psi_k(x^k,y^k)=\bigl\langle F(x^k),x^k-y^k \bigr\rangle + k d_{\Feas(x^k)}(x^k).
\end{equation*}
    By the boundedness of $A$ and $F$, the first term is bounded. Hence, dividing by $k$, we obtain $d_{\Feas(x^k)}(x^k)\to 0$, thus $d_{\Feas(\bar{x})}(\bar{x})=0$ by \cref{Lem:MoscoSemicontinuity}, and hence $\bar{x}\in\Feas(\bar{x})$.
    
    Finally, we claim that $\bar{x}$ solves the QVI. Observe that, for all $k$ and $y\in\Feas(x^k)$,
\begin{equation*}
    0\ge \Psi_k(x^k,y)= \bigl\langle F(x^k),x^k-y \bigr\rangle +k d_{\Feas(x^k)}(x^k) \ge \bigl\langle F(x^k),x^k-y \bigr\rangle.
\end{equation*}
    Thus, by \cref{Prop:PseudomonStability}, it follows that $\bar{x}$ is a solution of the QVI.
\end{proof}

The applicability of the above theorem depends most crucially on the weak Mosco-continuity of $\Feas$ and the existence of the weakly compact set $A$. It should be possible to modify the theorem by requiring some form of coercivity instead, but this is outside the scope of the present paper.

\begin{example}\label{Ex:GradientConstraints}
This example is based on \cite{Kunze2000}. Let $\Omega\subseteq \R^d$, $d\ge 2$, be a bounded domain, let $X:=H_0^1(\Omega)$, and consider the QVI given by the functions
\begin{equation*}
    F(u):=-\Delta u-f, \qquad \Feas(u):=\{ v\in H_0^1(\Omega): \|\nabla v\|\le \Psi(u) \},
\end{equation*}
where $\|\cdot\|$ is the Euclidean norm, $f\in H^{-1}(\Omega)$, and $\Psi: H_0^1(\Omega)\to L^{\infty}(\Omega)$ is completely continuous. Observe that $F$ is pseudomonotone by Lemma~\ref{Lem:PseudoSufficient}(i). Assume now that $c_1\le\Psi(u)\le c_2$ for all $u$ and some $c_1,c_2>0$. Then $\Feas$ is weakly Mosco-continuous by \cite[Lem.~1]{Kunze2000}. Moreover, $0\in\Feas(u)$ for all $u\in H_0^1(\Omega)$, and the Poincar\'e inequality implies that there is an $R>0$ such that $\Feas(u)\subseteq B_R^X$ for all $u\in H_0^1(\Omega)$. We conclude that all the requirements of Theorem~\ref{Thm:Existence} are satisfied; hence, the QVI admits a solution.
\end{example}

%

\section{The Augmented Lagrangian Method}\label{Sec:Method}

We now present the augmented Lagrangian method for the QVI \eqref{Eq:QVI}. The main approach is to penalize the function $G$ and therefore reduce the QVI to a sequence of standard VIs. Throughout the remainder of the paper, we assume that $i:\SpaceY\hookrightarrow\SpaceH$ densely for some real Hilbert space $\SpaceH$, and that $\ConH$ is a closed convex subset of $\SpaceH$ with $i^{-1}(\ConH)=\ConY$.

Consider the augmented Lagrangian $\Lag_{\rho}:X\times \SpaceH\to X^*$ given by
\begin{equation}\label{Eq:AL}
   \Lag_{\rho}(x,\lambda):=F(x)+\rho D_y G(x,x)^* \mleft[ G(x,x)+\frac{\lambda}{\rho}
   -P_{\ConH}\mleft( G(x,x)+\frac{\lambda}{\rho}\mright) \mright].
\end{equation}
Note that, if $\ConH$ is a cone, then we can simplify the above formula to $\Lag_{\rho}(x,\lambda)=F(x)+D_y G(x,x)^* P_{\ConH^{\circ}}(\lambda+\rho G(x,x))$ by using Moreau's decomposition \cite{Bauschke2011,Moreau1962}.

For the construction of our algorithm, we will need a means of controlling the penalty parameters. To this end, we define the utility function
\begin{equation}\label{Eq:V}
   V(x,\lambda,\rho):=
   \left\|G(x,x)-P_{\ConH}\mleft(G(x,x)+\frac{\lambda}{\rho}\mright)\right\|_{\SpaceH}.
\end{equation}
The function $V$ is a composite measure of feasibility and complementarity; it arises from an inherent slack variable transformation which is often used to define the augmented Lagrangian for inequality or cone constraints.

\begin{algorithm}[Augmented Lagrangian method]\label{Alg:ALM}
Let $(x^0,\lambda^0)\in X\times \SpaceH$, $\rho_0>0$, $\gamma>1$, $\tau\in(0,1)$, let $B\subseteq \SpaceH$ be a bounded set, and set $k:=0$.
\begin{enumerate}[topsep=1ex,parsep=0ex,leftmargin=*,label=\textbf{Step~\arabic*.}]
    \item If $(x^k,\lambda^k)$ satisfies a suitable termination 
        criterion: STOP.
    \item Choose $\BddMul^k\in B$ and compute an inexact solution (see below)
        $x^{k+1}$ of the VI
\begin{equation}\label{Eq:PenOpt}
    x\in \ConX, \quad \bigl\langle \Lag_{\rho_k}(x,\BddMul^k),y-x\bigr\rangle
    \ge 0 \quad \forall y\in \ConX.
\end{equation}
    \item Update the vector of multipliers to
        \begin{equation}\label{Eq:MultUpdate}
            \lambda^{k+1}:=\rho_k \mleft[ G(x^{k+1},x^{k+1})+\frac{\BddMul^k}{\rho_k}
            -P_{\ConH}\mleft(G(x^{k+1},x^{k+1})+\frac{\BddMul^k}{\rho_k}\mright) \mright].
        \end{equation}
    \item If $k=0$ or
        \begin{equation}\label{Eq:RhoTest}
            V(x^{k+1},\BddMul^k,\rho_k)\le \tau V(x^k,\BddMul^{k-1},\rho_{k-1})
        \end{equation}
        holds, set $\rho_{k+1}:=\rho_k$; otherwise, set $\rho_{k+1}:=\gamma\rho_{k}$.
    \item Set $k\leftarrow k+1$ and go to Step~1.
\end{enumerate}
\end{algorithm}

Let us make some simple observations. First, regardless of the primal iterates $\{x^k\}$, the multipliers $\{\lambda^k\}$ always lie in the polar cone $\ConH_{\infty}^{\circ}$ by \cite[Lem.~2.2]{Kanzow2017c}. Moreover, if $\ConH$ is a cone, then the well-known Moreau decomposition for closed convex cones implies that $\lambda^{k+1}=P_{\ConH^{\circ}}(\BddMul^k+\rho_k G(x^{k+1},x^{k+1}))$.

Secondly, we note that Algorithm \ref{Alg:ALM} uses a safeguarded multiplier sequence $\{\BddMul^k\}$ in certain places where classical augmented Lagrangian methods use the sequence $\{\lambda^k\}$. This bounding scheme goes back to \cite{Andreani2007,Pang2005} and is crucial to establishing good global convergence results for the method \cite{Andreani2007,Birgin2014,Kanzow2016b}. In practice, one usually tries to keep $\BddMul^k$ as ``close'' as possible to $\lambda^k$, e.g., by defining $\BddMul^k:=P_B(\lambda^k)$, where $B$ (the bounded set from the algorithm) is chosen suitably to allow cheap projections.

Our final observation concerns the definition of $\lambda^{k+1}$. Regardless of the manner in which $x^{k+1}$ is computed (exactly or inexactly), we always have the equality
\begin{equation}\label{Eq:AL_L}
    \Lag_{\rho_k}(x^{k+1},\BddMul^k)=\Lag(x^{k+1},\lambda^{k+1})
    \quad\text{for all }k\in\N,
\end{equation}
which follows directly from the definition of $\Lag_{\rho}$ and the multiplier updating scheme \eqref{Eq:MultUpdate}. This equality is the main motivation for the definition of $\lambda^{k+1}$.

We now prove a lemma which essentially asserts some kind of ``approximate normality'' of $\lambda^k$ and $G(x^k,x^k)$. Recall that the KKT conditions of the QVI require that $\bar{\lambda}\in\NorCone{\ConY}{G(\bar{x},\bar{x})}$. However, we have to take into account that $G(x^k,x^k)$ is not necessarily an element of $\ConY$.

\begin{lemma}\label{Lem:ApproxNormality}
    There is a null sequence $\{r_k\}\subseteq [0,\infty)$ such that $\bigl(\lambda^k,y-G(x^k,x^k)\bigr)\le r_k$ for all $y\in\ConH$ and $k\in\N$.
\end{lemma}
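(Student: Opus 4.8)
The plan is to read the defining inequality of the lemma directly off the variational characterization of the metric projection $P_{\ConH}$ together with the update \eqref{Eq:MultUpdate}, and then to control the error term $r_k$ by distinguishing whether the penalty parameters stay bounded.

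First I would fix $k\ge 1$ and abbreviate $z^k:=G(x^k,x^k)+\BddMul^{k-1}/\rho_{k-1}$, so that \eqref{Eq:MultUpdate} reads $\lambda^k=\rho_{k-1}\bigl(z^k-P_{\ConH}(z^k)\bigr)$. The projection onto the closed convex set $\ConH$ satisfies $\bigl(z^k-P_{\ConH}(z^k),\,y-P_{\ConH}(z^k)\bigr)\le 0$ for every $y\in\ConH$; multiplying by $\rho_{k-1}>0$ gives $\bigl(\lambda^k,\,y-P_{\ConH}(z^k)\bigr)\le 0$. Decomposing $y-G(x^k,x^k)=\bigl(y-P_{\ConH}(z^k)\bigr)+\bigl(P_{\ConH}(z^k)-G(x^k,x^k)\bigr)$ and using the identity $P_{\ConH}(z^k)-G(x^k,x^k)=(\BddMul^{k-1}-\lambda^k)/\rho_{k-1}$, which is immediate from the definitions of $z^k$ and $\lambda^k$, I obtain
\[
    \bigl(\lambda^k,\,y-G(x^k,x^k)\bigr)\le \frac{1}{\rho_{k-1}}\bigl(\lambda^k,\,\BddMul^{k-1}-\lambda^k\bigr)=:s_k \qquad\text{for all }y\in\ConH.
\]
It therefore suffices to put $r_k:=\max\{0,s_k\}$ and to prove $s_k\to 0$.

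Next I would record two estimates for $s_k$. Writing $M:=\sup_{w\in B}\|w\|_{\SpaceH}<\infty$ (finite since $B$ is bounded) and completing the square in $(\lambda^k,\BddMul^{k-1})-\|\lambda^k\|_{\SpaceH}^2$ gives $(\lambda^k,\BddMul^{k-1}-\lambda^k)\le M^2/4$, hence $s_k\le M^2/(4\rho_{k-1})$. On the other hand, the Cauchy--Schwarz inequality together with the identity $\|\lambda^k-\BddMul^{k-1}\|_{\SpaceH}=\rho_{k-1}\,V(x^k,\BddMul^{k-1},\rho_{k-1})$, which follows from \eqref{Eq:V} and \eqref{Eq:MultUpdate}, yields $s_k\le \|\lambda^k\|_{\SpaceH}\,V(x^k,\BddMul^{k-1},\rho_{k-1})$.

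Finally I would conclude by a case distinction on the monotone nondecreasing sequence $\{\rho_k\}$. If $\rho_k\to\infty$, the first estimate gives $s_k\le M^2/(4\rho_{k-1})\to 0$. If $\{\rho_k\}$ is bounded, then it is increased only finitely often and hence eventually constant, so the test \eqref{Eq:RhoTest} holds for all large $k$; iterating \eqref{Eq:RhoTest} shows $V_k:=V(x^k,\BddMul^{k-1},\rho_{k-1})\to 0$ geometrically, and since $\|\lambda^k-\BddMul^{k-1}\|_{\SpaceH}=\rho_{k-1}V_k\to 0$ with $\BddMul^{k-1}\in B$ bounded, the multipliers $\{\lambda^k\}$ remain bounded, whence $s_k\le\|\lambda^k\|_{\SpaceH}V_k\to 0$ by the second estimate. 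In either case $r_k\to 0$, as required. I expect the bounded-penalty case to be the main obstacle: one has to extract from the update rule that $V_k\to 0$ and, crucially, that this forces $\{\lambda^k\}$ to remain bounded, whereas the projection manipulation and the unbounded-penalty case are routine.
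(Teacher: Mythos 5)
Your proposal is correct and follows essentially the same route as the paper's proof: the variational characterization of $P_{\ConH}$ you use is exactly the normal-cone property $\lambda^k\in\NorCone{\ConH}{P_{\ConH}(z^k)}$ invoked there, the bound $s_k=\rho_{k-1}^{-1}(\lambda^k,\BddMul^{k-1}-\lambda^k)$ coincides with the paper's right-hand side, and the two-case analysis (completing the square when $\rho_k\to\infty$; geometric decay of $V_k$ plus boundedness of $\{\lambda^k\}$ when $\{\rho_k\}$ is bounded) matches the paper's argument step for step. Your write-up merely makes explicit the iteration of \eqref{Eq:RhoTest} and the identity $\|\lambda^k-\BddMul^{k-1}\|_{\SpaceH}=\rho_{k-1}V(x^k,\BddMul^{k-1},\rho_{k-1})$, which the paper leaves implicit.
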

\begin{proof}
    Let $y\in \ConH$ and define the sequence $s^{k+1}:=P_{\ConH}(G(x^{k+1},x^{k+1})+\BddMul^k/\rho_k)$. Then $s^{k+1}\in \ConH$ and $\lambda^{k+1}\in\NorCone{\ConH}{s^{k+1}}$ by \cite[Prop.~6.46]{Bauschke2011}. Moreover, we have
    \begin{equation}\label{Eq:LemApproxNormality1}
        G(x^{k+1},x^{k+1})=\frac{\lambda^{k+1}-\BddMul^k}{\rho_k}+s^{k+1}.
    \end{equation}
    This yields
\begin{align}
    \mleft(\lambda^{k+1},y-G(x^{k+1},x^{k+1})\mright) & =
    \mleft(\lambda^{k+1},y-\frac{1}{\rho_k}(\lambda^{k+1}-\BddMul^k)-s^{k+1}\mright) \notag \\
    & \le \frac{1}{\rho_k} \Bigl[\bigl(\lambda^{k+1},\BddMul^k\bigr)-\|\lambda^{k+1}\|_{\SpaceH}^2\Bigr]
    \label{Eq:LemApproxNormality2},
\end{align}
    where we used $\lambda^{k+1}\in\NorCone{\ConH}{s^{k+1}}$ for the last inequality. We now show that the sequence $\{r_k\}$ given by the right-hand side satisfies $\limsup_{k\to\infty} r_k\le 0$. This yields the desired result (by replacing $r_k$ with $\max\{0,r_k\}$). If $\{\rho_k\}$ is bounded, then \eqref{Eq:RhoTest} and \eqref{Eq:LemApproxNormality1} imply $\|\lambda^{k+1}-\BddMul^k\|_{\SpaceH}/\rho_k\to 0$ and therefore $\|\lambda^{k+1}-\BddMul^k\|_{\SpaceH}\to 0$. This yields the boundedness of $\{\lambda^{k+1}\}$ in $\SpaceH$ as well as $(\lambda^{k+1},\BddMul^k)-\|\lambda^{k+1}\|_{\SpaceH}^2=(\lambda^{k+1},\BddMul^k-\lambda^{k+1})\to 0$. Hence, $r_k\to 0$. We now assume that $\rho_k\to\infty$. Note that \eqref{Eq:LemApproxNormality2} is a quadratic function in $\lambda$. A simple calculation therefore shows that $r_k\le \|\BddMul^k\|_{\SpaceH}^2/(4\rho_k)$ and, hence, $\limsup_{k\to\infty} r_k\le 0$.
\end{proof}

Let us point out that the inequality in the above lemma is uniform since the sequence $\{r_k\}$ does not depend on the point $y\in\ConH$. Moreover, we remark that the proof uses only the definition of $\lambda^{k+1}$ and does not make any assumption on the sequence $\{x^k\}$.

\section{Global Convergence for Convex Constraints}\label{Sec:ConvexConv}

In this section, we analyze the convergence properties of Algorithm~\ref{Alg:ALM} for QVIs where the set $\Feas(x)$ is convex for all $x$. In the situation where $\Feas(x)=\{ y\in C: G(x,y)\in\ConY \}$ as in \eqref{Eq:FeasibleSet}, the natural analytic notion of convexity is the $\ConH_{\infty}$-concavity of $G$ with respect to $y$, see Section~\ref{Sec:Prelims}. To reflect this, we make the following set of assumptions.

\begin{assumption}\label{Asm:Convex}
    We assume that
\begin{enumerate*}[(i)]
\item $F$ is bounded and pseudomonotone,
\item $\Feas$ is weakly Mosco-continuous,
\item $G$ is $\ConH_{\infty}$-concave with respect to $y$,
\item $d_{\ConH}\circ G$ is weakly sequentially lsc, and
\item $x^{k+1}\in\ConX$ and $\varepsilon^{k+1}-\Lag_{\rho_k}(x^{k+1},\BddMul^k)\in \NorCone{\ConX}{x^{k+1}}$ for all $k$, where $\varepsilon^k\to 0$.
\end{enumerate*}
\end{assumption}

Note that (i) and (ii) were also used in Theorem~\ref{Thm:Existence}. Moreover, the assumption on the sequence $\{x^k\}$ is just an inexact version of the VI subproblem \eqref{Eq:PenOpt}.

We continue by proving the feasibility and optimality of weak limit points of the sequence $\{x^k\}$. The first result in this direction is the following.

\begin{lemma}\label{Lem:Feasibility_Cvx}
    Let Assumption~\ref{Asm:Convex} hold and let $\bar{x}$ be a weak limit point of $\{x^k\}$.
    If $\Feas(\bar{x})$ is nonempty, then $\bar{x}$ is feasible.
\end{lemma}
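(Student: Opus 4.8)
The plan is to prove feasibility by showing $d_{\ConH}\bigl(G(\bar{x},\bar{x})\bigr)=0$. Since $\ConX$ is closed and convex, hence weakly closed, and $x^k\wto\bar x$, we immediately get $\bar{x}\in\ConX$; and once $d_{\ConH}(G(\bar{x},\bar{x}))=0$ we obtain $G(\bar{x},\bar{x})\in\ConH$ (closedness) and therefore $G(\bar{x},\bar{x})\in i^{-1}(\ConH)=\ConY$, i.e.\ $\bar{x}\in\Feas(\bar{x})$. Let $\Subseq\subseteq\N$ be such that $x^k\wto_{\Subseq}\bar{x}$, and abbreviate $g^k:=G(x^k,x^k)$ and $V_k:=V(x^k,\BddMul^{k-1},\rho_{k-1})$. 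Applying the weak sequential lower semicontinuity of $d_{\ConH}\circ G$ (\cref{Asm:Convex}(iv)) to the weakly convergent sequence $(x^k,x^k)\wto_{\Subseq}(\bar{x},\bar{x})$ gives $d_{\ConH}(G(\bar{x},\bar{x}))\le\liminf_{k\in\Subseq}d_{\ConH}(g^k)$, and since $P_{\ConH}\bigl(g^k+\BddMul^{k-1}/\rho_{k-1}\bigr)\in\ConH$ we have $d_{\ConH}(g^k)\le V_k$. Hence it suffices to prove $\liminf_{k\in\Subseq}V_k=0$.

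I would then distinguish two cases according to the behaviour of the penalty parameters. If $\{\rho_k\}$ stays bounded, then $\rho_k$ is eventually increased only finitely often, so the test \eqref{Eq:RhoTest} holds for all large $k$, i.e.\ $V_{k+1}\le\tau V_k$ with $\tau\in(0,1)$, and therefore $V_k\to 0$ for the entire sequence. This case relies only on the multiplier update and needs neither the nonemptiness hypothesis nor pseudomonotonicity.

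The substantial case is $\rho_k\to\infty$, and this is where I expect the main difficulty and where the nonemptiness of $\Feas(\bar{x})$ enters. Choose $\hat{y}\in\Feas(\bar{x})$. Weak Mosco-continuity gives $\Feas(x^k)\Mto\Feas(\bar{x})$ along $\Subseq$, so by inner semicontinuity there are $\hat{y}^k\in\Feas(x^k)$ with $\hat{y}^k\to\hat{y}$; in particular $\hat{y}^k\in\ConX$ and $G(x^k,\hat{y}^k)\in\ConH$. Inserting $y=\hat{y}^k$ into the inexact subproblem optimality of \cref{Asm:Convex}(v), using $\Lag_{\rho_{k-1}}(x^k,\BddMul^{k-1})=\Lag(x^k,\lambda^k)$ from \eqref{Eq:AL_L}, and bounding the linearized term $(\lambda^k,D_yG(x^k,x^k)(\hat{y}^k-x^k))$ from above by $(\lambda^k,G(x^k,\hat{y}^k)-g^k)$ via the convexity of $y\mapsto(\lambda^k,G(x^k,y))$ (valid since $\lambda^k\in\ConH_{\infty}^{\circ}$ and $G$ is $\ConH_{\infty}$-concave, \cref{Lem:GeneralizedConvexity}(i)), I arrive at
\begin{equation*}
    \bigl(\lambda^k,\,g^k-G(x^k,\hat{y}^k)\bigr)\le\langle F(x^k),\hat{y}^k-x^k\rangle-\langle\varepsilon^k,\hat{y}^k-x^k\rangle=:M_k.
\end{equation*}
Along $\Subseq$ the iterates $\{x^k\}$ are bounded, $F$ is bounded, $\hat{y}^k\to\hat{y}$, and $\varepsilon^k\to 0$, so $\{M_k\}$ is bounded above by some $M\ge 0$. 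Writing $p^k:=P_{\ConH}\bigl(g^k+\BddMul^{k-1}/\rho_{k-1}\bigr)$, the update \eqref{Eq:MultUpdate} gives $\lambda^k=\rho_{k-1}(g^k-p^k)+\BddMul^{k-1}$ and $\lambda^k\in\NorCone{\ConH}{p^k}$. Splitting the left-hand side as $(\lambda^k,g^k-p^k)+(\lambda^k,p^k-G(x^k,\hat{y}^k))$, the first term equals $\rho_{k-1}V_k^2+(\BddMul^{k-1},g^k-p^k)$ and the second is nonnegative because $G(x^k,\hat{y}^k)\in\ConH$ and $\lambda^k\in\NorCone{\ConH}{p^k}$. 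Thus $\rho_{k-1}V_k^2+(\BddMul^{k-1},g^k-p^k)\le M$, and since $\{\BddMul^{k-1}\}\subseteq B$ is bounded (say $\|\BddMul^{k-1}\|_{\SpaceH}\le C_B$), a Young inequality yields $\tfrac12\rho_{k-1}V_k^2\le M+C_B^2/(2\rho_{k-1})$, so $V_k\to 0$ along $\Subseq$ as $\rho_{k-1}\to\infty$.

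In both cases $\liminf_{k\in\Subseq}V_k=0$, whence $d_{\ConH}(G(\bar{x},\bar{x}))=0$ and $\bar{x}$ is feasible. The crux is the $\rho_k\to\infty$ case: the interplay of subproblem stationarity, the generalized concavity of $G$, and the normal-cone property $\lambda^k\in\NorCone{\ConH}{p^k}$ produces the quadratic estimate $\rho_{k-1}V_k^2\le M+C_B V_k$, whose sign structure is exactly what forces the infeasibility measure to vanish as the penalty blows up. The points that require the most care are the correct use of inner semicontinuity to obtain $\hat{y}^k\in\Feas(x^k)$ with $G(x^k,\hat{y}^k)\in\ConH$, and the consistent identification of $\lambda^k$ as an element of the Hilbert space $\SpaceH$ when pairing it with $D_yG(x^k,x^k)(\hat y^k-x^k)$.
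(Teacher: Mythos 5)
Your proposal is correct, and in the decisive case $\rho_k\to\infty$ it takes a genuinely different route from the paper. The paper argues by \emph{contradiction}: assuming $d_{\ConH}(G(\bar{x},\bar{x}))>0$, it introduces $h_k(x,y):=d_{\ConH}(G(x,y)+\BddMul^k/\rho_k)$, uses the convexity and continuous differentiability of $h_k^2$ together with the gradient inequality and the identity $\Lag_{\rho_k}(x,\BddMul^k)=F(x)+(\rho_k/2)D_y(h_k^2)(x,x)$, and derives $\langle\varepsilon^{k+1},y^{k+1}-x^{k+1}\rangle\le-\rho_k c_1/2-c_2\to-\infty$, contradicting $\varepsilon^k\to0$. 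You instead work directly with the multiplier: writing $\lambda^k=\rho_{k-1}(g^k-p^k)+\BddMul^{k-1}$ and exploiting $\lambda^k\in\NorCone{\ConH}{p^k}$ (the same projection property the paper uses in \cref{Lem:ApproxNormality}) together with the convexity of $y\mapsto(\lambda^k,G(x^k,y))$, you obtain the quantitative estimate $\rho_{k-1}V_k^2\le M+C_B V_k$ and hence $V_k=O(\rho_{k-1}^{-1/2})$ along the subsequence — a direct decay argument rather than a blow-up contradiction. Both proofs share the same ingredients (nonemptiness of $\Feas(\bar{x})$ plus inner Mosco semicontinuity for the recovery sequence, $\ConH_\infty$-concavity, the inexact stationarity of \cref{Asm:Convex}(v), boundedness of $F$), and your bounded-$\{\rho_k\}$ case is identical to the paper's. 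What your route buys: it avoids the differentiability of the squared distance entirely (the projection form \eqref{Eq:AL} suffices), and it yields an explicit rate for the full infeasibility--complementarity measure $V$, which is more information than feasibility of $\bar{x}$ alone. What the paper's route buys: it never touches the multipliers, so it does not need $\lambda^k\in\ConH_{\infty}^{\circ}$ — your convexity step does need this via \cref{Lem:GeneralizedConvexity}(i), and while it holds by the remark after \cref{Alg:ALM}, you should cite that fact explicitly. Two minor points worth stating rather than leaving implicit: since $\{\rho_k\}$ is nondecreasing, $\rho_{k-1}\to\infty$ along your subsequence as well (needed in the Young-inequality step), and the application of \cref{Asm:Convex}(iv) is to the diagonal sequence $(x^k,x^k)\wto(\bar{x},\bar{x})$, exactly as in the paper.
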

\begin{proof}
    Let $\Subseq\subseteq\N$ be an index set such that $x^{k+1}\wto_{\Subseq}\bar{x}$. Observe first that $\bar{x}\in\ConX$ since $\ConX$ is closed and convex, hence weakly sequentially closed. It therefore remains to show that $G(\bar{x},\bar{x})\in\ConY$. If $\{\rho_k\}$ remains bounded, then the penalty updating scheme \eqref{Eq:RhoTest} implies
\begin{equation*}
    d_{\ConH}(G(x^{k+1},x^{k+1}))\le \mleft\| G(x^{k+1},x^{k+1})-
    P_{\ConH}\mleft( G(x^{k+1},x^{k+1})+\frac{\BddMul^k}{\rho_k}\mright)
    \mright\|_{\SpaceH} \to 0.
\end{equation*}
    Since $d_{\ConH}\circ G$ is weakly sequentially lsc, we obtain $d_{\ConH}(G(\bar{x},\bar{x}))=0$ and thus $G(\bar{x},\bar{x})\in\ConY$. Assume now that $\rho_k\to\infty$ and that $G(\bar{x},\bar{x})\notin \ConY$, or equivalently $d_{\ConH}^2(G(\bar{x},\bar{x}))>0$. Since $\Feas(\bar{x})$ is nonempty, we can choose an $y\in \Feas(\bar{x})$, and by inner semicontinuity there exists a sequence $y^{k+1}\in \Feas(x^{k+1})$ such that $y^{k+1}\to_{\Subseq} y$. Now, let $h_k(x,y):=d_{\ConH}(G(x,y)+\BddMul^k/\rho_k)$. Observe that $h_k$ is convex in $y$ by Lemma~\ref{Lem:GeneralizedConvexity}, and that $h_k^2$ is continuously differentiable in $y$ by \cite[Cor.~12.30]{Bauschke2011}. Moreover, since the distance function is nonexpansive, it follows that $h_k(x^{k+1},y^{k+1})\le \|\BddMul^k\|_H/\rho_k\to 0$. Thus, using \cref{Asm:Convex}(iv), we obtain
\begin{equation*}
    \liminf_{k\to\infty} \bigl[ h_k(x^{k+1},x^{k+1})-h_k(x^{k+1},y^{k+1}) \bigr]
    =\liminf_{k\to\infty} h_k(x^{k+1},x^{k+1})
    \ge d_{\ConH}(G(\bar{x},\bar{x})) >0.
\end{equation*}
    Hence, there is a constant $c_1>0$ such that $h_k^2(x^{k+1},x^{k+1})-h_k^2(x^{k+1},y^{k+1})\ge c_1$ for all $k\in\Subseq$ sufficiently large. The convexity of $h_k$ (and of $h_k^2$) now yields
\begin{equation*}
    \bigl\langle D_y (h_k^2)(x^{k+1},x^{k+1}),y^{k+1}-x^{k+1}\bigr\rangle\le
    h_k^2(x^{k+1},y^{k+1})-h_k^2(x^{k+1},x^{k+1}) \le -c_1.
\end{equation*}
    Furthermore, by (i), there is a constant $c_2\in\R$ such that $\langle F(x^{k+1}),x^{k+1}-y^{k+1}\rangle\ge c_2$ for all $k\in\Subseq$. Now, let $\{\varepsilon^k\}$ be the sequence from Assumption~\ref{Asm:Convex}. Observe that $\Lag_{\rho_k}(x^{k+1},\BddMul^k)=F(x^{k+1})+(\rho_k/2) D_y (h_k^2)(x^{k+1},x^{k+1})$. Therefore,
\begin{equation*}
    \bigl\langle\varepsilon^{k+1},y^{k+1}-x^{k+1}\bigr\rangle
    \le \bigl\langle \Lag_{\rho_k}(x^{k+1},\BddMul^k),y^{k+1}-x^{k+1}\bigr\rangle
    \le -\frac{\rho_k c_1}{2}-c_2\to -\infty,
\end{equation*}
    which contradicts $\varepsilon^{k+1}\to 0$.
\end{proof}

Since the augmented Lagrangian method is, at its heart, a penalty-type algorithm, the attainment of feasibility is paramount to the success of the algorithm. The above lemma gives us some information in this direction since it guarantees that the weak limit point $\bar{x}$ is ``as feasible as possible'' in the sense that it is feasible if (and only if) $\Feas(\bar{x})$ is nonempty. In many particular examples of QVIs (e.g., for moving-set problems), we know a priori that $\Feas(x)$ is nonempty for all $x\in X$, and this directly yields the feasibility of $\bar{x}$.

\begin{theorem}\label{Thm:Optimality_Cvx}
    Let Assumption~\ref{Asm:Convex} hold and let $\bar{x}$ be a weak limit point of $\{x^k\}$. If $\Feas(\bar{x})$ is nonempty, then $\bar{x}$ is feasible and a solution of the QVI.
\end{theorem}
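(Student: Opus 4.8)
The plan is to invoke \cref{Prop:PseudomonStability}, which is precisely designed to convert approximate-solution inequalities of the form \eqref{Eq:PropPseudomonStability1} into an exact QVI solution at a weak limit point. By \cref{Lem:Feasibility_Cvx}, the feasibility of $\bar{x}$ is already guaranteed once $\Feas(\bar{x})$ is nonempty, so that hypothesis of \cref{Prop:PseudomonStability} is in hand. Since $F$ is bounded and pseudomonotone and $\Feas$ is weakly Mosco-continuous by \cref{Asm:Convex}(i),(ii), the remaining task is to establish the approximate variational inequality
\begin{equation*}
    \langle F(x^{k+1}),y-x^{k+1}\rangle \ge \delta_k+\varepsilon_k\|y-x^{k+1}\|_X
    \quad\forall y\in\Feas(x^{k+1})
\end{equation*}
along the subsequence $\Subseq$ with $x^{k+1}\wto_{\Subseq}\bar{x}$, for suitable null sequences $\{\delta_k\},\{\varepsilon_k\}$.

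The key computation is to feed a feasible test point $y\in\Feas(x^{k+1})$ into the inexact VI condition from \cref{Asm:Convex}(v). First I would rewrite the augmented Lagrangian as $\Lag_{\rho_k}(x^{k+1},\BddMul^k)=F(x^{k+1})+(\rho_k/2)D_y(h_k^2)(x^{k+1},x^{k+1})$, exactly as in the proof of \cref{Lem:Feasibility_Cvx}, where $h_k(x,y):=d_{\ConH}(G(x,y)+\BddMul^k/\rho_k)$. The inexactness condition $\varepsilon^{k+1}-\Lag_{\rho_k}(x^{k+1},\BddMul^k)\in\NorCone{\ConX}{x^{k+1}}$ then gives, since $y\in\ConX$,
\begin{equation*}
    \langle F(x^{k+1}),x^{k+1}-y\rangle
    \le \langle\varepsilon^{k+1},x^{k+1}-y\rangle
    -\frac{\rho_k}{2}\bigl\langle D_y(h_k^2)(x^{k+1},x^{k+1}),y-x^{k+1}\bigr\rangle.
\end{equation*}
The crucial sign observation is that the penalty term works in our favor: by convexity of $h_k^2$ in $y$ (\cref{Lem:GeneralizedConvexity}) we have $\langle D_y(h_k^2)(x^{k+1},x^{k+1}),y-x^{k+1}\rangle\le h_k^2(x^{k+1},y)-h_k^2(x^{k+1},x^{k+1})\le h_k^2(x^{k+1},y)$, and since $y\in\Feas(x^{k+1})$ forces $G(x^{k+1},y)\in\ConY$, the nonexpansiveness of $d_{\ConH}$ yields $h_k(x^{k+1},y)\le \|\BddMul^k\|_{\SpaceH}/\rho_k$. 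Hence the penalty contribution is bounded by $(\rho_k/2)\cdot\|\BddMul^k\|_{\SpaceH}^2/\rho_k^2=\|\BddMul^k\|_{\SpaceH}^2/(2\rho_k)$, which is a null sequence because $\{\BddMul^k\}\subseteq B$ is bounded and $\rho_k$ either stays bounded or tends to infinity; in the bounded case the penalty term vanishes by the argument already used in \cref{Lem:ApproxNormality}.

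Collecting terms gives $\langle F(x^{k+1}),y-x^{k+1}\rangle\ge -\langle\varepsilon^{k+1},x^{k+1}-y\rangle-\|\BddMul^k\|_{\SpaceH}^2/(2\rho_k)$, and estimating $|\langle\varepsilon^{k+1},x^{k+1}-y\rangle|\le\|\varepsilon^{k+1}\|_{X^*}(\|x^{k+1}\|_X+\|y\|_X)$ produces a bound of the form $\delta_k+\varepsilon_k\|y-x^{k+1}\|_X$ with $\varepsilon_k:=\|\varepsilon^{k+1}\|_{X^*}\to 0$ and $\delta_k$ absorbing the bounded-$x^{k+1}$ terms and the penalty contribution, all of which go to zero. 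This is exactly the hypothesis \eqref{Eq:PropPseudomonStability1} of \cref{Prop:PseudomonStability}, so that proposition delivers the conclusion that $\bar{x}$ solves the QVI. I expect the main obstacle to be the bookkeeping that turns the raw inequality into the precise $\delta_k+\varepsilon_k\|y-x^{k+1}\|_X$ form — in particular handling the $\varepsilon^{k+1}$ term so that its dependence on $y$ is captured by the $\|y-x^{k+1}\|_X$ coefficient rather than a constant; writing $\langle\varepsilon^{k+1},x^{k+1}-y\rangle$ and bounding it by $\|\varepsilon^{k+1}\|_{X^*}\|y-x^{k+1}\|_X$ gives the clean $\varepsilon_k$ directly, which is the cleaner route and avoids needing a priori boundedness of the test points.
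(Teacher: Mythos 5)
Your skeleton coincides with the paper's: feasibility via \cref{Lem:Feasibility_Cvx}, then \cref{Prop:PseudomonStability} applied to the inexact VI from \cref{Asm:Convex}(v), tested with points $y^{k+1}\in\Feas(x^{k+1})$ supplied by Mosco-continuity. Where you genuinely diverge is in how the penalty/multiplier term is estimated. The paper does not expand $\Lag_{\rho_k}$ through $D_y(h_k^2)$ at this point; it uses the identity \eqref{Eq:AL_L}, $\Lag_{\rho_k}(x^{k+1},\BddMul^k)=\Lag(x^{k+1},\lambda^{k+1})$, then the convexity of $y\mapsto\langle\lambda^{k+1},G(x^{k+1},y)\rangle$ from \cref{Lem:GeneralizedConvexity} to bound $\langle\lambda^{k+1},D_yG(x^{k+1},x^{k+1})(y^{k+1}-x^{k+1})\rangle$ by $\langle\lambda^{k+1},G(x^{k+1},y^{k+1})-G(x^{k+1},x^{k+1})\rangle$, which is at most $r_{k+1}$ by the approximate normality of \cref{Lem:ApproxNormality} since $G(x^{k+1},y^{k+1})\in\ConH$. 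That lemma treats bounded and unbounded $\{\rho_k\}$ in one uniform stroke, which is precisely why the paper routes the argument through $\lambda^{k+1}$. Your alternative --- convexity of $h_k^2$ in $y$ plus nonexpansiveness of $d_{\ConH}$ --- bounds the very same directional term (note $\tfrac{\rho_k}{2}D_y(h_k^2)(x^{k+1},x^{k+1})=D_yG(x^{k+1},x^{k+1})^*\lambda^{k+1}$ by \eqref{Eq:MultUpdate}) and has the merit of recycling machinery already deployed in \cref{Lem:Feasibility_Cvx} without invoking \cref{Lem:ApproxNormality}.

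However, your chain has a gap exactly where you sense trouble, and your pointer to \cref{Lem:ApproxNormality} does not repair the chain as literally written. Having dropped the term $-h_k^2(x^{k+1},x^{k+1})$, your penalty bound $\|\BddMul^k\|_{\SpaceH}^2/(2\rho_k)$ is a null sequence \emph{only} when $\rho_k\to\infty$; if $\{\rho_k\}$ stays bounded it merely stays bounded, and no subsequent argument can recover nullness from the weakened inequality. The fix is to back up and retain the dropped term: by \eqref{Eq:MultUpdate} one has $h_k(x^{k+1},x^{k+1})=\|\lambda^{k+1}\|_{\SpaceH}/\rho_k$, so the penalty contribution is bounded below by $\bigl(\|\lambda^{k+1}\|_{\SpaceH}^2-\|\BddMul^k\|_{\SpaceH}^2\bigr)/(2\rho_k)$. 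In the bounded case, \eqref{Eq:RhoTest} yields $\|\lambda^{k+1}-\BddMul^k\|_{\SpaceH}=\rho_k\,V(x^{k+1},\BddMul^k,\rho_k)\to 0$, hence $\|\lambda^{k+1}\|_{\SpaceH}^2-\|\BddMul^k\|_{\SpaceH}^2\to 0$, and since $\rho_k\ge\rho_0>0$ the bound is null; in the unbounded case discard the term as you did. With this case distinction made explicit, your $\delta_k$ together with $\varepsilon_k:=-\|\varepsilon^{k+1}\|_{X^*}$ (your dual-norm handling of the inexactness term is the right one) satisfies \eqref{Eq:PropPseudomonStability1}, and \cref{Prop:PseudomonStability} concludes the proof as you intend.
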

\begin{proof}
    The feasibility follows from Lemma~\ref{Lem:Feasibility_Cvx}. For the optimality part, we apply Proposition~\ref{Prop:PseudomonStability}. To this end, let $y^k\in \Feas(x^k)$, let $\{\varepsilon^k\}$ be the sequence from Assumption~\ref{Asm:Convex}, and recall that $\Lag_{\rho_k}(x^{k+1},\BddMul^k)=\Lag(x^{k+1},\lambda^{k+1})$. Then
\begin{align*}
    \bigl\langle \varepsilon^{k+1},y^{k+1}-x^{k+1}\bigr\rangle & \le
    \bigl\langle F(x^{k+1})+ D_y G(x^{k+1},x^{k+1})^*\lambda^{k+1},y^{k+1}-x^{k+1}\bigr\rangle \\
    & = \bigl\langle F(x^{k+1}),y^{k+1}-x^{k+1} \bigr\rangle + \bigl\langle\lambda^{k+1},
    D_y G(x^{k+1},x^{k+1})(y^{k+1}-x^{k+1})\bigr\rangle \\
    & \le \bigl\langle F(x^{k+1}),y^{k+1}-x^{k+1}\bigr\rangle+\bigl\langle\lambda^{k+1},
    G(x^{k+1},y^{k+1})-G(x^{k+1},x^{k+1})\bigr\rangle,
\end{align*}
    where we used the convexity of $y\mapsto \langle \lambda^{k+1},G(x^{k+1},y)\rangle$ which follows from Lemma~\ref{Lem:GeneralizedConvexity}. Since $G(x^{k+1},y^{k+1})\in \ConH$, the last term is bounded from above by $r_{k+1}$, where $\{r_k\}$ is the null sequence from Lemma~\ref{Lem:ApproxNormality}. The result therefore follows from Proposition~\ref{Prop:PseudomonStability}.
\end{proof}

The above theorem guarantees the optimality of any weak limit point of the sequence generated by Algorithm~\ref{Alg:ALM}. Despite this, it should be pointed out that the result is purely ``primal'' in the sense that no assertions are made for the multiplier sequence. We will investigate the dual (or, more precisely, primal-dual) behavior of the augmented Lagrangian method in more detail in Section~\ref{Sec:GeneralConv}.

If the mapping $F$ is strongly monotone, then we obtain strong convergence of the iterates.

\begin{corollary}\label{Cor:StrMonotone}
    Let Assumption~\ref{Asm:Convex} hold and assume that there is a $c>0$ such that
\begin{equation*}
    \langle F(x)-F(y),x-y\rangle\ge c \|x-y\|_X^2 \quad\text{for all }x,y\in X.
\end{equation*}
    If $x^k\wto_{\Subseq}\bar{x}$ for some subset $\Subseq\subseteq\N$ and $\bar{x}$ is a solution of the QVI, then $x^k\to_{\Subseq}\bar{x}$.
\end{corollary}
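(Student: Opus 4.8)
The plan is to exploit strong monotonicity to upgrade the weak convergence $x^k\wto_{\Subseq}\bar{x}$ to strong convergence. Since $\bar{x}$ solves the QVI, it is feasible, so $\bar{x}\in\Feas(\bar{x})\subseteq\ConX$, and by the $\ConH_{\infty}$-concavity of $G$ (Assumption~\ref{Asm:Convex}(iii)) together with Lemma~\ref{Lem:GeneralizedConvexity} the sets $\Feas(x)$ are convex. The starting point is the inequality $c\|x^k-\bar{x}\|_X^2\le\langle F(x^k)-F(\bar{x}),x^k-\bar{x}\rangle=\langle F(x^k),x^k-\bar{x}\rangle-\langle F(\bar{x}),x^k-\bar{x}\rangle$. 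Here the second term tends to zero along $\Subseq$ because $F(\bar{x})\in X^*$ is fixed and $x^k-\bar{x}\wto_{\Subseq}0$. Hence everything reduces to showing that $\limsup_{k\in\Subseq}\langle F(x^k),x^k-\bar{x}\rangle\le 0$; the strong-monotonicity estimate then forces $\|x^k-\bar{x}\|_X\to_{\Subseq}0$.

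To control $\langle F(x^k),x^k-\bar{x}\rangle$ I would first produce a suitable recovery sequence. Since $x^k\wto_{\Subseq}\bar{x}$ and $\bar{x}\in\Feas(\bar{x})$, the weak Mosco-continuity of $\Feas$ (Assumption~\ref{Asm:Convex}(ii)), applied to the subsequence indexed by $\Subseq$, yields via its inner-semicontinuity part points $y^k\in\Feas(x^k)$ with $y^k\to_{\Subseq}\bar{x}$. The key estimate is then exactly the one established in the proof of Theorem~\ref{Thm:Optimality_Cvx}: testing the approximate VI from Assumption~\ref{Asm:Convex}(v) with $y=y^k\in\Feas(x^k)\subseteq\ConX$, using the identity \eqref{Eq:AL_L} so that $\Lag_{\rho_{k-1}}(x^k,\BddMul^{k-1})=\Lag(x^k,\lambda^k)$, invoking the convexity of $y\mapsto\langle\lambda^k,G(x^k,y)\rangle$ (Lemma~\ref{Lem:GeneralizedConvexity}, since $\lambda^k\in\ConH_{\infty}^{\circ}$), and bounding the resulting term $\langle\lambda^k,G(x^k,y^k)-G(x^k,x^k)\rangle\le r_k$ by the approximate normality of Lemma~\ref{Lem:ApproxNormality} (which applies because $G(x^k,y^k)\in\ConH$), one obtains $\langle\varepsilon^k,y^k-x^k\rangle\le\langle F(x^k),y^k-x^k\rangle+r_k$, with $\{r_k\}$ a null sequence.

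From this I would conclude as follows. A weakly convergent sequence is bounded, so $\{x^k\}_{\Subseq}$ is bounded, hence $\{y^k-x^k\}_{\Subseq}$ is bounded and $\langle\varepsilon^k,y^k-x^k\rangle\to_{\Subseq}0$ because $\varepsilon^k\to 0$; together with $r_k\to 0$ this gives $\limsup_{k\in\Subseq}\langle F(x^k),x^k-y^k\rangle\le 0$. Writing $\langle F(x^k),x^k-\bar{x}\rangle=\langle F(x^k),x^k-y^k\rangle+\langle F(x^k),y^k-\bar{x}\rangle$ and using that $F$ is bounded (so $\{F(x^k)\}_{\Subseq}$ is bounded in $X^*$) while $y^k\to_{\Subseq}\bar{x}$ strongly, the second summand vanishes, so $\limsup_{k\in\Subseq}\langle F(x^k),x^k-\bar{x}\rangle\le 0$. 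Plugging this back into the strong-monotonicity inequality yields $\limsup_{k\in\Subseq}c\|x^k-\bar{x}\|_X^2\le 0$, i.e.\ $x^k\to_{\Subseq}\bar{x}$.

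The routine parts are the boundedness bookkeeping and the reuse of the Theorem~\ref{Thm:Optimality_Cvx} estimate. The one genuinely delicate point — and the main obstacle — is that one cannot simply test the subproblem with $y=\bar{x}$ directly: $G(x^k,\bar{x})$ need not lie in $\ConH$, so Lemma~\ref{Lem:ApproxNormality} would not apply. This is precisely why the feasible recovery sequence $y^k\in\Feas(x^k)$ with $y^k\to\bar{x}$ (guaranteed by weak Mosco-continuity) is indispensable, after which strong convergence of $y^k$ lets one transfer the estimate from $y^k$ back to the fixed limit $\bar{x}$.
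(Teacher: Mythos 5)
Your proposal is correct and takes essentially the same route as the paper's own proof: construct a recovery sequence $y^k\in\Feas(x^k)$ with $y^k\to_{\Subseq}\bar{x}$ via inner semicontinuity, reuse the estimate from the proof of Theorem~\ref{Thm:Optimality_Cvx} (based on \eqref{Eq:AL_L}, Lemma~\ref{Lem:GeneralizedConvexity}, and Lemma~\ref{Lem:ApproxNormality}) to obtain $\liminf_{k\in\Subseq}\langle F(x^k),y^k-x^k\rangle\ge 0$, transfer this to $\liminf_{k\in\Subseq}\langle F(x^k),\bar{x}-x^k\rangle\ge 0$ using the boundedness of $F$ and the strong convergence of $\{y^k\}$, and conclude with the strong-monotonicity inequality. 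The only difference is presentational: you spell out the intermediate estimate that the paper compresses into the phrase ``recalling the proof of Theorem~\ref{Thm:Optimality_Cvx}.''
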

\begin{proof}
    By the weak Mosco-continuity of $\Feas$, there is a sequence $\bar{x}^k\in\Feas (x^k)$ such that $\bar{x}^k\to \bar{x}$. Recalling the proof of Theorem~\ref{Thm:Optimality_Cvx}, we have $\liminf_{k\in\Subseq}\langle F(x^k),\bar{x}^k-x^k\rangle\ge 0$ and therefore $\liminf_{k\in\Subseq}\langle F(x^k),\bar{x}-x^k\rangle\ge 0$. The strong monotonicity of $F$ yields
\begin{equation*}
    c\|x^k-\bar{x}\|_X^2\le \bigl\langle F(x^k)-F(\bar{x}),x^k-\bar{x}\bigr\rangle
    =\bigl\langle F(x^k),x^k-\bar{x}\bigr\rangle -
    \bigl\langle F(\bar{x}),x^k-\bar{x} \bigr\rangle.
\end{equation*}
    But the $\limsup$ of the first term is less than or equal to zero, and the second term converges to zero since $x^k\wto_{\Subseq}\bar{x}$. Hence, $\|x^k-\bar{x}\|_X\to 0$, and the proof is complete.
\end{proof}

%

\section{Primal-Dual Convergence Analysis}\label{Sec:GeneralConv}

The purpose of this section is to establish a formal connection between the primal-dual sequence $\{(x^k,\lambda^k)\}$ generated by the augmented Lagrangian method and the KKT conditions of the QVI. The main motivation for this is the relationship \eqref{Eq:AL_L} between the augmented Lagrangian and the ordinary Lagrangian, and the approximate normality of $\lambda^k$ and $G(x^k,x^k)$ from Lemma~\ref{Lem:ApproxNormality}. These two components essentially constitute an asymptotic version of the KKT conditions and suggest that a careful analysis of the primal-dual sequence $\{(x^k,\lambda^k)\}$ could lead to suitable optimality assertions.

\begin{assumption}\label{Asm:GeneralConv}
    We assume that
\begin{enumerate*}[(i)]
\item $F$ bounded and pseudomonotone,
\item the mappings $G$ and $D_y G$ are completely continuous, and
\item $x^{k+1}\in\ConX$ and $\varepsilon^{k+1}-\Lag_{\rho_k}(x^{k+1},\BddMul^k)\in \NorCone{\ConX}{x^{k+1}}$ for all $k$, where $\varepsilon^k\to 0$.
\end{enumerate*}
\end{assumption}

The above assumption is certainly natural in the sense that $x^{k+1}$ is an approximate solution of the corresponding VI subproblem, and that the degree of inexactness vanishes as $k\to\infty$. However, we obviously need to clarify whether this assumption can be satisfied in practice. To this end, we obtain the following result.

\begin{lemma}\label{Lem:SubproblemSolutions}
    Let Assumption~\ref{Asm:GeneralConv} \textnormal{(i)-(iii)} hold and assume that $\ConX$ is weakly compact. Then the VI subproblems \eqref{Eq:PenOpt} admit solutions for every $k\in\N$.
\end{lemma}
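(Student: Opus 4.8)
The plan is to reduce the existence question for the VI subproblem \eqref{Eq:PenOpt} to an application of Corollary~\ref{Cor:ExistenceBNS}, taking $A:=\ConX$ (which is nonempty, convex, and weakly compact by hypothesis) together with the given bounded pseudomonotone operator $F$. First I would split the augmented Lagrangian into its ``smooth'' and penalty parts by writing $\Lag_{\rho_k}(x,\BddMul^k)=F(x)+\Theta_k(x)$, where
\begin{equation*}
    \Theta_k(x):=\rho_k\, D_y G(x,x)^*\mleft[ G(x,x)+\frac{\BddMul^k}{\rho_k}-P_{\ConH}\mleft(G(x,x)+\frac{\BddMul^k}{\rho_k}\mright)\mright]\in X^*,
\end{equation*}
and then define the bifunction $\varphi(x,y):=\langle \Theta_k(x),x-y\rangle$. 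With these choices, the conclusion of the corollary, namely the existence of $\hat{x}\in\ConX$ with $\langle F(\hat{x}),\hat{x}-y\rangle+\varphi(\hat{x},y)\le 0$ for all $y\in\ConX$, is precisely the statement $\langle \Lag_{\rho_k}(\hat{x},\BddMul^k),y-\hat{x}\rangle\ge 0$ for all $y\in\ConX$, i.e.\ a solution of \eqref{Eq:PenOpt}.

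It then remains to check that $\varphi$ satisfies the three hypotheses of Corollary~\ref{Cor:ExistenceBNS}. Conditions (i) and (ii) are immediate: $\varphi(x,x)=0$ since the pairing vanishes, and for fixed $x$ the map $y\mapsto\langle\Theta_k(x),x-y\rangle$ is affine, hence concave. The only substantive point is condition (iii), the weak sequential lower semicontinuity of $\varphi(\cdot,y)$; in fact, I expect to prove the stronger statement that $\varphi(\cdot,y)$ is weakly sequentially continuous.

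The heart of the argument --- and the step I expect to be the main obstacle --- is showing that $x^j\wto x$ implies $\Theta_k(x^j)\to\Theta_k(x)$ strongly in $X^*$. Here the complete continuity of $G$ and $D_y G$ (Assumption~\ref{Asm:GeneralConv}(ii)) is essential: it yields $G(x^j,x^j)\to G(x,x)$ and $D_y G(x^j,x^j)\to D_y G(x,x)$ strongly, the latter in operator norm so that the adjoints converge as well. Since $\rho_k$ and $\BddMul^k$ are fixed and $P_{\ConH}$ is nonexpansive, the bracketed term converges strongly in $\SpaceH$, hence in $\SpaceY^*$ via the dense embedding $i:\SpaceY\hookrightarrow\SpaceH$; composing a norm-convergent sequence of operators with a strongly convergent sequence of vectors then gives $\Theta_k(x^j)\to\Theta_k(x)$ in $X^*$. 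The routine split
\begin{equation*}
    \langle\Theta_k(x^j),x^j-y\rangle-\langle\Theta_k(x),x-y\rangle=\langle\Theta_k(x^j)-\Theta_k(x),x^j-y\rangle+\langle\Theta_k(x),x^j-x\rangle
\end{equation*}
then shows that $\varphi(\cdot,y)$ is weakly sequentially continuous, using the boundedness of $\{x^j\}$ for the first term and $x^j\wto x$ for the second. With all hypotheses verified, Corollary~\ref{Cor:ExistenceBNS} delivers the desired solution $\hat{x}=x^{k+1}$, and since $k$ was arbitrary the subproblems are solvable for every $k\in\N$.
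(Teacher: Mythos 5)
Your proposal is correct, and it reaches the conclusion through the same key tool as the paper---Corollary~\ref{Cor:ExistenceBNS} applied on the nonempty, convex, weakly compact set $A:=\ConX$---but via a genuinely different decomposition. The paper's proof is a two-liner: it takes $\varphi\equiv 0$ and treats the \emph{whole} map $x\mapsto\Lag_{\rho_k}(x,\BddMul^k)$ as the operator in the corollary, observing that the penalty part $\Theta_k$ is completely continuous, hence pseudomonotone by Lemma~\ref{Lem:PseudoSufficient}(b), so that the sum with $F$ is pseudomonotone by Lemma~\ref{Lem:PseudoSufficient}(d). You instead keep $F$ as the operator and move the penalty part into the bifunction slot via $\varphi(x,y):=\langle\Theta_k(x),x-y\rangle$, checking weak sequential continuity of $\varphi(\cdot,y)$ by hand. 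The two routes rest on the same analytic fact: your ``main obstacle'' step, that $x^j\wto x$ forces $\Theta_k(x^j)\to\Theta_k(x)$ in $X^*$ (complete continuity of $G$ and $D_y G$, nonexpansiveness of $P_{\ConH}$, operator-norm convergence of the adjoints), is exactly the hypothesis of Lemma~\ref{Lem:PseudoSufficient}(b) unpacked. Your variant does buy something: Corollary~\ref{Cor:ExistenceBNS} requires the operator to be \emph{bounded} as well as pseudomonotone, and the paper's one-line invocation tacitly needs boundedness of $x\mapsto\Lag_{\rho_k}(x,\BddMul^k)$, i.e.\ of $\Theta_k$ (true here, e.g.\ because $\ConX$ is weakly sequentially compact by Eberlein--\v{S}mulian and $\Theta_k$ is completely continuous, but this is left unsaid); in your arrangement only $F$ must be bounded, which is Assumption~\ref{Asm:GeneralConv}(i) verbatim, so the issue disappears. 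What the paper's route buys in exchange is brevity and reuse of the pseudomonotonicity calculus rather than a bespoke continuity verification. One cosmetic remark: the bracketed term lives in $\SpaceH$, and what converges in $\SpaceY^*$ is its image under the adjoint of the embedding $i$ (composed with the Riesz identification $\SpaceH\cong\SpaceH^*$), not the term ``via the dense embedding'' itself; this slip does not affect the argument, since the relevant composition with $D_y G(x,x)^*$ converges exactly as you say.
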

\begin{proof}
    Observe that, for all $k\in\N$, the function $x\mapsto \Lag_{\rho_k}(x,\BddMul^k)$ is pseudomonotone by Lemma~\ref{Lem:PseudoSufficient}. Hence, by \cref{Cor:ExistenceBNS}, the corresponding VIs admit solutions in $\ConX$.
\end{proof}

Our next result deals with the feasibility of the iterates. As observed in the previous section, the attainment of feasibility is crucial to the success of penalty-type methods such as the augmented Lagrangian method. This aspect is even more important for QVIs due to the inherently difficult structure of the constraints.

\begin{lemma}\label{Lem:Feasibility}
    Let Assumption~\ref{Asm:GeneralConv} hold and let $\bar{x}$ be a weak limit point of $\{x^k\}$. Then $\bar{x}\in\ConX$ and $-D_y (d_{\ConH}^2\circ G)(\bar{x},\bar{x}) \in\NorCone{\ConX}{\bar{x}}$. If $\bar{x}$ satisfies ERCQ, then $\bar{x}$ is feasible.
\end{lemma}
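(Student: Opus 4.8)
The plan is to follow the two-case structure used in the proof of Lemma~\ref{Lem:Feasibility_Cvx}, splitting according to whether the penalty sequence $\{\rho_k\}$ stays bounded or tends to infinity. The inclusion $\bar{x}\in\ConX$ is immediate, since $x^{k+1}\in\ConX$ by Assumption~\ref{Asm:GeneralConv}(iii) and $\ConX$ is closed and convex, hence weakly sequentially closed. The crucial observation for the gradient statement is the identity already exploited in Lemma~\ref{Lem:Feasibility_Cvx}: with $h_k(x,y):=d_{\ConH}(G(x,y)+\BddMul^k/\rho_k)$ one has $\Lag_{\rho_k}(x^{k+1},\BddMul^k)=F(x^{k+1})+\tfrac{\rho_k}{2}D_y(h_k^2)(x^{k+1},x^{k+1})$, where $\tfrac12 D_y(h_k^2)(x,y)=D_y G(x,y)^*[\,G(x,y)+\BddMul^k/\rho_k-P_{\ConH}(G(x,y)+\BddMul^k/\rho_k)\,]$. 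I fix a subset $\Subseq\subseteq\N$ with $x^{k+1}\wto_{\Subseq}\bar{x}$; such a sequence is bounded.

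For the case $\rho_k\to\infty$, I would insert an arbitrary $y\in\ConX$ into the inexact VI condition of Assumption~\ref{Asm:GeneralConv}(iii), divide by $\rho_k$, and pass to the limit along $\Subseq$. Since $\{x^{k+1}\}$ and $F$ are bounded and $\varepsilon^{k+1}\to0$, the terms $\rho_k^{-1}\langle F(x^{k+1}),y-x^{k+1}\rangle$ and $\rho_k^{-1}\langle\varepsilon^{k+1},y-x^{k+1}\rangle$ vanish. For the penalty term, complete continuity of $G$ and $D_yG$ gives $G(x^{k+1},x^{k+1})\to G(\bar{x},\bar{x})$ and $D_yG(x^{k+1},x^{k+1})\to D_yG(\bar{x},\bar{x})$, while $\BddMul^k/\rho_k\to0$ because $\BddMul^k$ stays in the bounded set $B$; together with the nonexpansiveness of $P_{\ConH}$ this yields $\tfrac12 D_y(h_k^2)(x^{k+1},x^{k+1})\to D_yG(\bar{x},\bar{x})^*\zeta$ strongly in $X^*$, where $\zeta:=G(\bar{x},\bar{x})-P_{\ConH}(G(\bar{x},\bar{x}))$. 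Pairing this strong limit against the weak limit $y-x^{k+1}\wto_{\Subseq}y-\bar{x}$ gives $\langle D_yG(\bar{x},\bar{x})^*\zeta,y-\bar{x}\rangle\ge0$ for all $y\in\ConX$. As $D_y(d_{\ConH}^2\circ G)(\bar{x},\bar{x})=2D_yG(\bar{x},\bar{x})^*\zeta$ by the chain rule and \cite[Cor.~12.30]{Bauschke2011}, and the normal cone is a cone, this is exactly $-D_y(d_{\ConH}^2\circ G)(\bar{x},\bar{x})\in\NorCone{\ConX}{\bar{x}}$. In the complementary case where $\{\rho_k\}$ is bounded, the penalty parameter is eventually constant, so the test \eqref{Eq:RhoTest} holds for all large $k$ and forces $V(x^{k+1},\BddMul^k,\rho_k)\to0$; since $d_{\ConH}(G(x^{k+1},x^{k+1}))\le V(x^{k+1},\BddMul^k,\rho_k)$, complete continuity and continuity of $d_{\ConH}$ give $d_{\ConH}(G(\bar{x},\bar{x}))=0$, i.e.\ $\bar{x}$ is already feasible, whence $P_{\ConH}(G(\bar{x},\bar{x}))=G(\bar{x},\bar{x})$, $\zeta=0$, and the gradient statement holds trivially.

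It remains to derive feasibility from ERCQ, which by the above is needed only when $\rho_k\to\infty$. Here I would argue by contradiction: suppose $\zeta\neq0$, i.e.\ $d_{\ConH}(G(\bar{x},\bar{x}))=\|\zeta\|_{\SpaceH}>0$. The projection inequality for $P_{\ConH}$ gives $\langle\zeta,z-G(\bar{x},\bar{x})\rangle_{\SpaceH}\le-\|\zeta\|_{\SpaceH}^2$ for every $z\in\ConH$, in particular for every $z\in\ConY\subseteq\ConH$, while the gradient inequality just established reads $\langle\zeta,D_yG(\bar{x},\bar{x})(y-\bar{x})\rangle_{\SpaceH}\ge0$ for all $y\in\ConX$. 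In particular ERCQ yields $y_0\in\ConX$ and $z_0\in\ConY$ with $G(\bar{x},\bar{x})+D_yG(\bar{x},\bar{x})(y_0-\bar{x})-z_0=0$; pairing $\zeta$ with this identity gives $0=\langle\zeta,G(\bar{x},\bar{x})-z_0\rangle_{\SpaceH}+\langle\zeta,D_yG(\bar{x},\bar{x})(y_0-\bar{x})\rangle_{\SpaceH}\ge\|\zeta\|_{\SpaceH}^2>0$, a contradiction. Hence $\zeta=0$ and $G(\bar{x},\bar{x})\in\ConH$, which together with $i^{-1}(\ConH)=\ConY$ gives $G(\bar{x},\bar{x})\in\ConY$ and thus feasibility of $\bar{x}$.

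The main obstacle I anticipate is the careful bookkeeping across the three spaces $X$, $\SpaceY$, and $\SpaceH$: the penalty gradient lives naturally in $X^*$ but is built from the $\SpaceH$-projection $P_{\ConH}$, whereas ERCQ is a condition in $\SpaceY$. The clean way to reconcile these is to keep $\zeta$ as an element of $\SpaceH=\SpaceH^*$ and to pair it through the dense embedding $i:\SpaceY\hookrightarrow\SpaceH$, so that the $\SpaceH$-inner products above are well defined on the $\SpaceY$-valued quantities $G(\bar{x},\bar{x})-z_0$ and $D_yG(\bar{x},\bar{x})(y_0-\bar{x})$. The only other delicate point is the strong convergence $\tfrac12 D_y(h_k^2)(x^{k+1},x^{k+1})\to D_yG(\bar{x},\bar{x})^*\zeta$ in $X^*$, which I would justify by combining operator-norm convergence of $D_yG$ with strong convergence of the bracketed residual, both supplied by complete continuity.
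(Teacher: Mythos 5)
Your proof is correct, and for most of the statement it runs parallel to the paper's: the weak sequential closedness of $\ConX$, the dichotomy between bounded $\{\rho_k\}$ (where \eqref{Eq:RhoTest} forces $V\to 0$, hence $d_{\ConH}(G(x^{k+1},x^{k+1}))\to 0$ and feasibility follows from complete continuity, making the gradient claim trivial) and $\rho_k\to\infty$ (divide the inexact VI inclusion by $\rho_k$, use complete continuity of $G$ and $D_yG$, $\BddMul^k/\rho_k\to 0$, and nonexpansiveness of $P_{\ConH}$ to obtain strong convergence of the penalty gradient in $X^*$, then pass to the limit). The paper compresses this last limit into ``a standard closedness property of the normal cone mapping''; your explicit pairing of the strongly convergent functional against the weakly convergent $y-x^{k+1}$, for arbitrary $y\in\ConX$, is precisely that property spelled out, and your operator-norm-times-strong-vector justification of the strong convergence is sound.

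Where you genuinely depart from the paper is the ERCQ step. The paper takes $r>0$ with $B_r^{\SpaceY}\subseteq G(\bar{x},\bar{x})+D_yG(\bar{x},\bar{x})(\ConX-\bar{x})-\ConY$, decomposes every $y$ in the ball, bounds the projection term merely by nonnegativity to get $\langle\zeta,y\rangle\ge 0$ on the symmetric ball, hence $\langle\zeta,y\rangle=0$ there, and then invokes the density of $\SpaceY$ in $\SpaceH$ to conclude $\zeta=0$. You instead decompose the single point $0=G(\bar{x},\bar{x})+D_yG(\bar{x},\bar{x})(y_0-\bar{x})-z_0$ and use the sharper projection estimate $(\zeta,\,i(z_0)-G(\bar{x},\bar{x}))_{\SpaceH}\le-\|\zeta\|_{\SpaceH}^2$ together with the normal-cone inequality to obtain $0\ge\|\zeta\|_{\SpaceH}^2$ directly. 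This is a valid and strictly more economical argument: it needs only the \emph{membership} $0\in G(\bar{x},\bar{x})+D_yG(\bar{x},\bar{x})(\ConX-\bar{x})-\ConY$ rather than the interiority in \eqref{Eq:RobinsonCQ}, and it dispenses entirely with the density of the embedding $i:\SpaceY\hookrightarrow\SpaceH$. In fact, the paper's own estimate already yields $\langle\zeta,y\rangle\ge\|\zeta\|_{\SpaceH}^2$ for all $y$ in the ball, so evaluating at $y=0$ would give your shortcut; the paper simply does not exploit it. So your version proves a formally stronger feasibility assertion under a weaker hypothesis, while the paper's ball-plus-density argument is the one that generalizes naturally to the multiplier-boundedness analysis of Theorem~\ref{Thm:Optimality}, where the full interiority is indispensable.
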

\begin{proof}
    Clearly, $\bar{x}\in \ConX$ since $\ConX$ is weakly sequentially closed. If $\{\rho_k\}$ is bounded, then \eqref{Eq:RhoTest} implies $d_{\ConH}(G(x^{k+1},x^{k+1}))\to 0$, which yields $G(\bar{x},\bar{x})\in \ConH$ since $G$ is completely continuous. Hence, there is nothing to prove. Now, let $\rho_k\to\infty$, let $x^{k+1}\wto_{\Subseq}\bar{x}$ on some subset $\Subseq\subseteq\N$, and let $\{\varepsilon^k\}\subseteq X^*$ be the sequence from Assumption \ref{Asm:GeneralConv}. Then
    \begin{equation*}
        \varepsilon^{k+1}-F(x^{k+1})-D_y G(x^{k+1},x^{k+1})^* \lambda^{k+1}
        \in \NorCone{\ConX}{x^{k+1}}
    \end{equation*}
    for all $k\in\N$. We now divide this inclusion by $\rho_k$, use the definition of $\lambda^{k+1}$ and the fact that $\NorCone{\ConX}{x^{k+1}}$ is a cone. It follows that
\begin{align*}
    -D_y G(x^{k+1},x^{k+1})^* \biggl[ G(x^{k+1},x^{k+1})+\frac{\BddMul^k}{\rho_k}
    & -P_{\ConH} \mleft( G(x^{k+1},x^{k+1})+\frac{\BddMul^k}{\rho_k} \mright)\biggr] \\
    & +\frac{\varepsilon^{k+1}-F(x^{k+1})}{\rho_k}\in\NorCone{\ConX}{x^{k+1}}.
\end{align*}
    Note that $G(x^{k+1},x^{k+1})\to G(\bar{x},\bar{x})$ and $D_y G(x^{k+1},x^{k+1})\to D_y G(\bar{x},\bar{x})$ by complete continuity. Thus, taking the limit $k\to_{\Subseq}\infty$ and using a standard closedness property of the normal cone mapping, we obtain
\begin{equation}\label{Eq:ThmFeasibility1}
    -D_y G(\bar{x},\bar{x})^* [ G(\bar{x},\bar{x})-
    P_{\ConH}(G(\bar{x},\bar{x})) ]\in \NorCone{\ConX}{\bar{x}},
\end{equation}
    which is the first claim. Assume now that ERCQ holds in $\bar{x}$, and let $r>0$ be such that $B_r^{\SpaceY}\subseteq G(\bar{x},\bar{x})+D_y G(\bar{x},\bar{x}) (\ConX-\bar{x})-\ConY$. Then, for any $y\in B_r^{\SpaceY}$, there are $z\in \ConX$ and $w\in\ConY$ such that $y = G(\bar{x},\bar{x})+D_y G(\bar{x},\bar{x})(z-\bar{x})-w$. In particular, we have
\begin{align*}
    \langle G(\bar{x},\bar{x})-P_{\ConH}(G(\bar{x},\bar{x})),y\rangle ={} &
    \bigl\langle D_y G(\bar{x},\bar{x})^* \bigl[G(\bar{x},\bar{x})-P_{\ConH}(G(\bar{x},\bar{x}))\bigr]
    ,z-\bar{x}\bigr\rangle \\
    & + \langle G(\bar{x},\bar{x})-P_{\ConH}(G(\bar{x},\bar{x})),G(\bar{x},\bar{x})-w\rangle.
\end{align*}
    The first term is nonnegative by \eqref{Eq:ThmFeasibility1}, and so is the second term by standard projection inequalities. Hence, $\langle G(\bar{x},\bar{x})-P_{\ConH}(G(\bar{x},\bar{x})),y\rangle\ge 0$ for all $y\in B_r^{\SpaceY}$, which implies $\langle G(\bar{x},\bar{x})-P_{\ConH}(G(\bar{x},\bar{x})),y\rangle=0$ for all $y\in B_r^{\SpaceY}$ and, since $\SpaceY$ is dense in $\SpaceH$, it follows that $G(\bar{x},\bar{x})-P_{\ConH}(G(\bar{x},\bar{x}))=0$. This completes the proof.
\end{proof}

\noindent
The above is our main feasibility result for this section. Note that the assertion $-D_y (d_{\ConH}^2\circ G)(\bar{x},\bar{x})\in\NorCone{\ConX}{\bar{x}}$ has a rather natural interpretation: the function $d_{\ConH}^2\circ G$ is a measure of infeasibility, and the lemma above states that any weak limit point $\bar{x}$ of $\{x^k\}$ has a (partial) minimization property for this function on the set $\ConX$.

The observation that an extended form of RCQ yields the actual feasibility of the point $\bar{x}$ is motivated by similar arguments for penalty-type methods in finite dimensions. In fact, a common condition used in the convergence theory of such methods is the extended MFCQ (see Section~\ref{Sec:Prelims}), which is a generalization of the ordinary MFCQ to points which are not necessarily feasible.

Having dealt with the feasibility aspect, we now turn to the main primal-dual convergence result.

\begin{theorem}\label{Thm:Optimality}
    Let Assumption~\ref{Asm:GeneralConv} hold, let $x^k\wto_{\Subseq}\bar{x}$ on some subset $\Subseq\subseteq\N$, and assume that $\bar{x}$ satisfies ERCQ. Then $\bar{x}$ is feasible, the sequence $\{\lambda^k\}_{k\in\Subseq}$ is bounded in $\SpaceY^*$, and each of its weak-$^*$ accumulation points is a Lagrange multiplier corresponding to $\bar{x}$.
\end{theorem}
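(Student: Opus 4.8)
The plan is to establish the three assertions in sequence, leveraging the feasibility already obtained under ERCQ in \cref{Lem:Feasibility} and the approximate normality from \cref{Lem:ApproxNormality}. The feasibility of $\bar{x}$ follows directly from \cref{Lem:Feasibility} since we assume ERCQ. The heart of the matter is to show that $\{\lambda^k\}_{k\in\Subseq}$ is bounded in $\SpaceY^*$; the convergence to a Lagrange multiplier will then follow by a limiting argument in the defining inclusions of the KKT system.

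First I would prove boundedness of the multiplier sequence. The natural strategy is a contradiction argument: suppose $\|\lambda^{k}\|_{\SpaceY^*}\to\infty$ along a further subsequence (still indexed by $\Subseq$), and normalize by setting $\hat{\lambda}^k:=\lambda^k/\|\lambda^k\|_{\SpaceY^*}$, so that $\|\hat{\lambda}^k\|_{\SpaceY^*}=1$. The key inclusion is the approximate KKT stationarity $\varepsilon^{k+1}-F(x^{k+1})-D_y G(x^{k+1},x^{k+1})^*\lambda^{k+1}\in\NorCone{\ConX}{x^{k+1}}$ from \cref{Asm:GeneralConv}(iii) combined with \eqref{Eq:AL_L}. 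Dividing this inclusion by $\|\lambda^{k+1}\|_{\SpaceY^*}$ and using that $F$ is bounded (so $F(x^{k+1})/\|\lambda^{k+1}\|_{\SpaceY^*}\to 0$) and that $\varepsilon^{k+1}\to 0$, one obtains in the limit $-D_y G(\bar{x},\bar{x})^* \hat{\lambda}\in\NorCone{\ConX}{\bar{x}}$ for any weak-$^*$ limit $\hat{\lambda}$ of $\{\hat{\lambda}^k\}$; here the complete continuity of $D_y G$ from \cref{Asm:GeneralConv}(ii) ensures $D_y G(x^{k+1},x^{k+1})^*$ converges in the appropriate sense. Simultaneously, the normalized form of \cref{Lem:ApproxNormality}, namely $(\hat{\lambda}^k,y-G(x^k,x^k))\le r_k/\|\lambda^k\|_{\SpaceY^*}$, passes to the limit using $G(x^{k+1},x^{k+1})\to G(\bar{x},\bar{x})$ to give $\hat{\lambda}\in\NorCone{\ConH}{G(\bar{x},\bar{x})}$, and since $\bar{x}$ is feasible this is $\NorCone{\ConY}{G(\bar{x},\bar{x})}$. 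Thus $\hat{\lambda}$ is a nonzero element (being a unit-norm weak-$^*$ limit, though the nontriviality must be argued) satisfying the homogeneous system, which is precisely what ERCQ is designed to rule out; the contradiction establishes boundedness.

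Once boundedness is in hand, the existence of weak-$^*$ accumulation points follows from the weak-$^*$ sequential compactness of bounded sets in $\SpaceY^*$ (via reflexivity or a separability assumption on $\SpaceY$). For any such accumulation point $\bar{\lambda}$, I would pass to the limit directly in the (un-normalized) stationarity inclusion and in \cref{Lem:ApproxNormality}: the former yields $-\Lag(\bar{x},\bar{\lambda})=-F(\bar{x})-D_y G(\bar{x},\bar{x})^*\bar{\lambda}\in\NorCone{\ConX}{\bar{x}}$ using $\varepsilon^{k+1}\to 0$, the complete continuity of $D_y G$, and the closedness of the normal cone map in the relevant topologies, while the latter yields $\bar{\lambda}\in\NorCone{\ConY}{G(\bar{x},\bar{x})}$ since $r_k\to 0$. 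Together these are exactly the KKT conditions \eqref{Eq:KKT}, so $\bar{\lambda}\in\Lambda(\bar{x})$.

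The main obstacle I anticipate is the nontriviality of the normalized limit $\hat{\lambda}$ in the boundedness argument. A weak-$^*$ limit of unit vectors need not be nonzero in general, so I would need to argue that $\hat{\lambda}\neq 0$, presumably by exploiting the structure of ERCQ more carefully --- for instance by evaluating the homogeneous relations against the directions furnished by the Robinson condition $0\in\operatorname{int}[G(\bar{x},\bar{x})+D_y G(\bar{x},\bar{x})(\ConX-\bar{x})-\ConY]$ to force $\hat{\lambda}=0$ outright, thereby contradicting a lower bound on $\|\hat{\lambda}^k\|$ that survives the limit. A secondary technical point is ensuring that all convergences take place in $\SpaceY^*$ rather than merely $\SpaceH$; since the multipliers a priori live in $\SpaceH$, I would need the complete continuity of $D_y G$ together with the density of $\SpaceY$ in $\SpaceH$ to transfer the estimates and identify $\bar\lambda$ as an element of $\SpaceY^*$.
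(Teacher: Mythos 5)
Your overall architecture (feasibility from \cref{Lem:Feasibility}, then boundedness, then a limiting argument using \cref{Lem:ApproxNormality}) matches the paper, but the boundedness step has a genuine gap that you yourself flag without closing. In infinite dimensions the normalize-and-contradict scheme fails at exactly the point you anticipate: a weak-$^*$ limit $\hat{\lambda}$ of the unit vectors $\hat{\lambda}^k=\lambda^k/\|\lambda^k\|_{\SpaceY^*}$ may well be zero, and $\hat{\lambda}=0$ satisfies the homogeneous system $-D_yG(\bar{x},\bar{x})^*\hat{\lambda}\in\NorCone{\ConX}{\bar{x}}$, $\hat{\lambda}\in\NorCone{\ConY}{G(\bar{x},\bar{x})}$ trivially, so ERCQ rules out nothing. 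Your proposed repair --- forcing $\hat{\lambda}=0$ and contradicting ``a lower bound on $\|\hat{\lambda}^k\|$ that survives the limit'' --- cannot work as stated, because the dual norm is only weak-$^*$ lower semicontinuous; no lower bound survives. (There is also the prior issue that bounded sequences in $\SpaceY^*$ need not have weak-$^*$ convergent subsequences for general Banach $\SpaceY$.) The paper avoids normalization entirely and argues quantitatively: ERCQ plus the generalized open mapping theorem gives $r>0$ with $B_r^{\SpaceY}\subseteq G(\bar{x},\bar{x})+D_yG(\bar{x},\bar{x})\bigl[(\ConX-\bar{x})\cap B_1^X\bigr]-\ConY$; one picks unit vectors $y^k$ with $\langle\lambda^k,y^k\rangle\ge\tfrac12\|\lambda^k\|_{\SpaceY^*}$, represents $-ry^k$ via this ball (with error $\delta^k\to0$ by complete continuity of $G$ and $D_yG$), and then bounds $\langle\lambda^k,z^k-G(x^k,x^k)\rangle$ by the \emph{uniform} estimate of \cref{Lem:ApproxNormality} and $-\langle\lambda^k,D_yG(x^k,x^k)(v^k-x^k)\rangle$ by $\langle F(x^k)-\varepsilon^k,v^k-x^k\rangle$ via the stationarity inclusion, yielding $\tfrac{r}{4}\|\lambda^k\|_{\SpaceY^*}\le c$ directly. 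Any rigorous version of your normalized argument would be forced into this same computation, at which point the normalization is superfluous.

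The accumulation-point step also has a gap: you pass to the limit in the inclusion invoking ``closedness of the normal cone map,'' but $x^k$ converges only \emph{weakly} and $F$ is merely bounded and pseudomonotone, so $F(x^k)$ need not converge to $F(\bar{x})$ in any topology that makes the graph of $\NorCone{\ConX}{\cdot}$ sequentially closed (weak $\times$ weak-$^*$ closedness of that graph fails in general). The paper instead rewrites the inclusion as the variational inequality $\langle\varepsilon^k,y-x^k\rangle\le\langle F(x^k),y-x^k\rangle+\langle\lambda^k,D_yG(x^k,x^k)(y-x^k)\rangle$ for $y\in\ConX$, inserts $y=\bar{x}$ to obtain $\liminf_{k\in\Subseq}\langle F(x^k),\bar{x}-x^k\rangle\ge0$ (the multiplier term vanishes by complete continuity and the just-established boundedness of $\{\lambda^k\}$), and then applies the pseudomonotonicity of $F$, exactly as in \cref{Prop:PseudomonStability}, to conclude $-\Lag(\bar{x},\bar{\lambda})\in\NorCone{\ConX}{\bar{x}}$; your treatment of $\bar{\lambda}\in\NorCone{\ConY}{G(\bar{x},\bar{x})}$ via \cref{Lem:ApproxNormality} is correct. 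Finally, note the theorem only asserts that \emph{each} weak-$^*$ accumulation point is a multiplier, so your proposed reflexivity or separability assumptions on $\SpaceY$ to guarantee existence of accumulation points are not needed and should not be added.
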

\begin{proof}
    The feasibility of $\bar{x}$ follows from Lemma~\ref{Lem:Feasibility}. Observe furthermore that $G(x^k,x^k)\to_{\Subseq} G(\bar{x},\bar{x})$ and $D_y G(x^k,x^k)\to_{\Subseq} D_y G(\bar{x},\bar{x})$. If $\{r_k\}$ and $\{\varepsilon^k\}$ are the sequences from Lemma~\ref{Lem:ApproxNormality} and Assumption~\ref{Asm:GeneralConv} respectively, then we have
\begin{equation}\label{Eq:ThmOptimality1}
    \varepsilon^k-\Lag(x^k,\lambda^k)\in \NorCone{\ConX}{x^k}
    \quad\text{and}\quad
    \bigl( \lambda^k,y-G(x^k,x^k) \bigr)\le r_k ~\forall y\in\ConH
\end{equation}
    for all $k\ge 1$. To prove the boundedness of $\{\lambda^k\}_{k\in\Subseq}$, we now proceed as follows. Applying the generalized open mapping theorem \cite[Thm.~2.70]{Bonnans2000} to the multifunction $\Psi(u):=G(\bar{x},\bar{x})+D_y G(\bar{x},\bar{x})u-\ConY$ on the domain $\ConX-\bar{x}$, we see that there is an $r>0$ such that
\begin{equation*}
    B_r^{\SpaceY} \subseteq G(\bar{x},\bar{x}) + D_y G(\bar{x},\bar{x})
    \mleft[(\ConX-\bar{x})\cap B_1^X\mright]-\ConY.
\end{equation*}
    By the definition of the dual norm, we can choose a sequence $\{y^k\}\subseteq \SpaceY$ of unit vectors such that $\langle\lambda^k,y^k\rangle\ge\frac{1}{2}\|\lambda^k\|_{
    \SpaceY^*}$. For every $k$, we can write
\begin{equation*}
    -r y^k = G(\bar{x},\bar{x})+D_y G(\bar{x},\bar{x})(v^k-\bar{x})-z^k
\end{equation*}
    with $\{v^k\}\subseteq \ConX$ bounded and $\{z^k\}\subseteq\ConY$. It follows that
    $r y^k = z^k-G(x^k,x^k)-D_y G(x^k,x^k)(v^k-x^k)+\delta^k$ with $\delta^k\to 0$
    as $k\to_{\Subseq}\infty$. Assume now that $k$ is large enough so that
    $\|\delta^k\|_{\SpaceY}\le r/4$. Then, by \eqref{Eq:ThmOptimality1},
\begin{align*}
    \frac{r}{2} \|\lambda^k\|_{\SpaceY^*} \le \bigl\langle\lambda^k,r y^k\bigr\rangle
    & \le \bigl\langle \lambda^k,z^k-G(x^k,x^k)\bigr\rangle-
    \bigl\langle \lambda^k,D_y G(x^k,x^k)(v^k-x^k) \bigr\rangle
    +\frac{r}{4} \|\lambda^k\|_{\SpaceY^*} \\
    & \le \bigl\langle\lambda^k,z^k-G(x^k,x^k)\bigr\rangle+\bigl\langle F(x^k)-\varepsilon^k,v^k-x^k\bigr\rangle
    +\frac{r}{4}\|\lambda^k\|_{\SpaceY^*}.
\end{align*}
    By \eqref{Eq:ThmOptimality1}, the first two terms are bounded from above (for $k\in\Subseq$) by some constant $c>0$. Reordering the above inequality yields
    $\frac{r}{4}\|\lambda^k\|_{\SpaceY^*}\le c$, and the result follows.
    
    Finally, let us show that every weak-$^*$ limit point of $\{\lambda^k\}_{k\in\Subseq}$ is a Lagrange multiplier. Without loss of generality, we assume that $\lambda^k\wto_{\Subseq}^* \bar{\lambda}$ for some $\bar{\lambda}\in\SpaceY^*$ on the same subset $\Subseq\subseteq\N$ where $\{x^k\}$ converges weakly to $\bar{x}$. By \eqref{Eq:ThmOptimality1} and standard properties of the normal cone, we have $\bar{\lambda}\in \NorCone{\ConY}{G(\bar{x},\bar{x})}$. Now, let $y\in \ConX$. Then, by \eqref{Eq:ThmOptimality1},
\begin{equation}\label{Eq:ThmOptimality2}
    \bigl\langle \varepsilon^k,y-x^k\bigr\rangle\le \bigl\langle F(x^k),y-x^k\bigr\rangle+\bigl\langle\lambda^k,D_y G(x^k,x^k) (y-x^k)\bigr\rangle.
\end{equation}
    By complete continuity, we have $D_y G(x^k,x^k)\to_{\Subseq} D_y G(\bar{x},\bar{x})$ and $D_y G(\bar{x},\bar{x})(y-x^k)\to_{\Subseq} D_y G(\bar{x},\bar{x})(y-\bar{x})$, see \cite[Thm.~1.5.1]{Emelyanov2007}. Hence, $D_y G(x^k,x^k)(y-x^k)\to_{\Subseq} D_y G(\bar{x},\bar{x})(y-\bar{x})$. We now argue similarly to Proposition~\ref{Prop:PseudomonStability}: inserting $y:=\bar{x}$ into \eqref{Eq:ThmOptimality2} yields $\liminf_{k\in\Subseq}\langle F(x^k),\bar{x}-x^k\rangle \ge 0$. Using the pseudomonotonicity of $F$ and the fact that $\lambda^k\wto_{\Subseq}^*\bar{\lambda}$, we obtain that
\begin{equation*}
    \langle F(\bar{x}),y-\bar{x} \rangle +
    \langle \bar{\lambda},D_y G(\bar{x},\bar{x})(y-\bar{x}) \rangle \ge
    \limsup_{k\in\Subseq} \bigl[ \bigl\langle F(x^k),y-x^k \rangle+
    \bigl\langle \lambda^k,D_y G(x^k,x^k)(y-x^k) \bigr\rangle \bigr].
\end{equation*}
    Since $y\in\ConX$ was arbitrary, this means that $-\Lag(\bar{x},\bar{\lambda})\in\NorCone{\ConX}{\bar{x}}$. Hence, $(\bar{x},\bar{\lambda})$ is a KKT point of the QVI.
\end{proof}

\section{Applications and Numerical Results}\label{Sec:Applic}

In this section, we present some numerical applications of our theoretical framework. Let us start by observing that our algorithm generalizes various methods from finite dimensions, e.g., for QVIs \cite{Kanzow2016,Kanzow2017a,Pang2005} or generalized Nash equilibrium problems \cite{Kanzow2016a}. Hence, any of the applications in those papers remain valid for the present one.

However, the much more interesting case is that of QVIs which are fundamentally infinite-dimensional. In the following, we will present three such examples, discuss their theoretical background, and then apply the augmented Lagrangian method to discretized versions of the problems. The implementation was done in MATLAB and uses the algorithmic parameters
\begin{equation*}
    \lambda^0:=0, \quad \rho_0:=1, \quad \tau=0.1, \quad \gamma:=10,
    \quad B:=[-10^6,10^6],
\end{equation*}
where $B$ is understood in the pointwise sense. The VI subproblems arising in the algorithm are solved by a semismooth Newton-type method. The outer and inner iterations are terminated when the residual of the corresponding first-order system drops below a certain threshold; the corresponding tolerances are chosen as $10^{-4}$ $(10^{-6})$ for the outer (inner) iterations in Examples~1 and 3, and $10^{-6}$ $(10^{-8})$ in Example~2.

Since our examples are defined in function spaces, we will typically use the notation $(u,v)$ instead of $(x,y)$ for the variable pairs in the space $X^2$. This should be rather clear from the context and hopefully does not introduce any confusion. Moreover, for a function space $\SpaceY$, we will denote by $\SpaceY_+$ the nonnegative cone in $\SpaceY$.

\subsection{An Implicit Signorini Problem}\label{Sec:ApplicSignorini}

The application presented here is an implicit Signorini-type problem \cite{Bensoussan1984,Mosco1976}. Let $\Omega\subseteq\R^2$ be a bounded domain with smooth boundary $\Gamma$, and let $X$ denote the space
\begin{equation*}
    X:=\{ u\in H^1(\Omega): \Delta u\in L^2(\Omega) \},
    \quad \text{with norm }
    \|u\|_X:=\|u\|_{H^1(\Omega)}+\|\Delta u\|_{L^2(\Omega)}.
\end{equation*}
Recall that the trace operator $\tau$ maps $H^1(\Omega)$ into $H^{1/2}(\Gamma)$, that $H^{1/2}(\Gamma)^*=:H^{-1/2}(\Gamma)$ \cite{Adams2003}, and that the normal derivative $\partial_n:X\to H^{-1/2}(\Gamma)$ is well-defined and continuous \cite{Tartar2007}. For fixed elements $h_0,\phi\in H^{1/2}(\Gamma)$ with $\phi\ge 0$, consider the set-valued mapping
\begin{equation*}
    \Feas(u):=\{ v\in X: \tau v\ge h(u)\text{ on }\Gamma \}, \quad
    h(u):=h_0-\langle\phi,\partial_n u\rangle,
\end{equation*}
where $h:H^1(\Omega)\to H^{1/2}(\Gamma)$ and the duality pairing is understood between $H^{1/2}(\Gamma)$ and $H^{-1/2}(\Gamma)$. The problem in question now is the QVI
\begin{equation*}
    u\in \Feas(u), \quad \langle A u-f,v-u\rangle\ge 0 \quad \forall v\in \Feas(u),
\end{equation*}
where $A:X\to X^*$ is a monotone differential operator and $f\in H^{-1}(\Omega)$. This problem can be cast into our general framework by choosing
\begin{equation*}
\begin{gathered}
    X=\{ u\in H^1(\Omega): \Delta u\in L^2(\Omega) \}, \quad \ConX:=X,
    \quad F(u):=A u-f,\\
    \SpaceY:=H^{1/2}(\Gamma), \quad G(u,v):=\tau v-h(u), \quad
    \ConY:=\SpaceY_+.
\end{gathered}
\end{equation*}
We claim that the implicit Signorini problem satisfies Assumption~\ref{Asm:Convex}. The pseudomonotonicity of $F$ follows from Lemma~\ref{Lem:PseudoSufficient}, and the mapping $G$ is weakly sequentially continuous. Moreover, the Mosco-continuity of $\Feas$ is rather straightforward to prove since $u\mapsto \langle\phi,\partial_n u\rangle$ maps into a one-dimensional subspace of $\SpaceY$.

It follows from the theory in Section~\ref{Sec:ConvexConv} that every weak limit point of the sequence $\{u^k\}$ generated by Algorithm~\ref{Alg:ALM} is a solution of the QVI. (Note that $\Feas(u)$ is nonempty for all $u\in X$, as was assumed in Lemma~\ref{Lem:Feasibility_Cvx}.) Observe furthermore that, since $\ConX=X$, the sequences $\{u^k\}$ and $\{\lambda^k\}$ generated by the algorithm satisfy
\begin{equation*}
    0 \leftarrow \bigl\langle F(u^k),h \bigr\rangle+\bigl\langle \tau^* \lambda^k,h\bigr\rangle
    = \bigl\langle F(u^k),h \bigr\rangle + \bigl\langle \lambda^k,\tau h\bigr\rangle
\end{equation*}
for all $h\in X$. Since the image $\tau (X)$ of the trace operator contains $H^{3/2}(\Gamma)$ \cite{Adams2003}, it follows that a subsequence of $\{\lambda^k\}$ converges weak-$^*$ in $H^{3/2}(\Gamma)^*$.

\begin{figure}\centering
    \subfloat[Solution $\bar{u}$]{\includegraphics[scale=0.5]{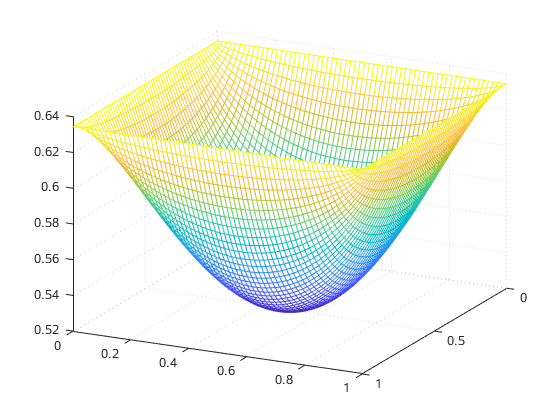}}
    \subfloat[Constraint value $G(\bar{u},\bar{u})$]{\includegraphics[scale=0.5]{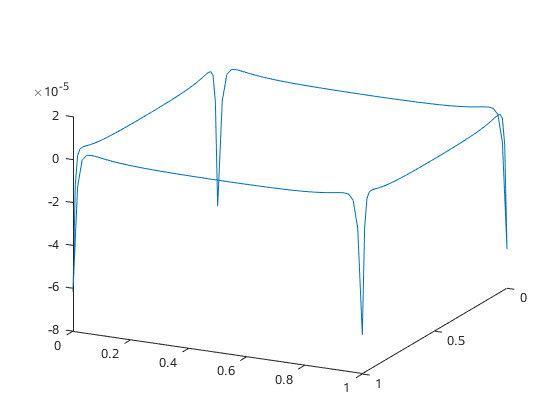}}
    \caption{Numerical results for the implicit Signorini problem with $n=64$.}
    \label{Fig:Signorini}
\end{figure}

We now present some numerical results for the domain $\Omega:=(0,1)^2$ and the differential operator $A u:=u-\Delta u$. For the implementation of the method, we set $\SpaceH:=L^2(\Gamma)$, $\ConH:=\SpaceH_+$, and discretize the domain $\Omega$ by means of a uniform grid with $n\in\N$ points per row or column (including boundary points), i.e., $n^2$ points in total. The remaining problem parameters are given by $f\equiv -1$ and $\phi=h_0\equiv 1$.
\begin{center}
\begin{tabular}{r|ccccc}
    $n$ & $16$ & $32$ & $64$ & $128$ & $256$ \\ \hline
	outer it. & 9 & 9 & 9 & 10 & 10 \\
	inner it. & 42 & 41 & 42 & 47 & 52 \\
	$\rho_{\max}$ & $10^4$ & $10^4$ & $10^5$ & $10^5$ & $10^5$
\end{tabular}
\end{center}
We observe that the method scales rather well with increasing dimension $n$. In particular, the outer iteration numbers and final penalty parameters remain nearly constant, and the increase in terms of inner iteration numbers is very moderate.

\subsection{Parametric Gradient Constraints}

This example is based on the theoretical framework in \cite{Hintermueller2012,Kunze2000} and represents a QVI with pointwise gradient-type constraints. Note that we have already given a brief discussion of such examples in Section~\ref{Sec:Existence}. Let $d,p\ge 2$ and $X:=W_0^{1,p}(\Omega)$ for some bounded domain $\Omega\subseteq \R^d$. Consider the set-valued mapping
\begin{equation}\label{Eq:ParametricGradientConstraint}
    \Feas(u):=\{ v\in X: \|\nabla v\|\le \Psi (u) \}
\end{equation}
where $\|\cdot\|$ is the Euclidean norm in $\R^d$ and $\Psi: X\to L^{\infty}(\Omega)$, as well as the resulting QVI
\begin{equation*}
    u\in \Feas(u), \quad \langle -\Delta_p u-f,v-u\rangle\ge 0 \quad \forall v\in \Feas(u),
\end{equation*}
where $f\in X^*$ and $\Delta_p:X\to X^*$ is the $p$-Laplacian defined by
\begin{equation*}
    \langle \Delta_p u,v\rangle := - \int_{\Omega} \|\nabla u(x)\|^{p-2} \nabla u(x)^T \nabla v(x)
    \,\textnormal{d}x.
\end{equation*}
Hence, we have $F(u):=-\Delta_p u-f$. Observe that $F$ is monotone, bounded, and continuous \cite{Hintermueller2012}, hence pseudomonotone by Lemma~\ref{Lem:PseudoSufficient}(i). Assume now that $\Psi$ is completely continuous and satisfies $\Psi u\ge c_1$ for all $u$ and some $c_1>0$. Then $\Feas$ is weakly Mosco-continuous by \cite[Lem.~1]{Kunze2000}.

When applying the augmented Lagrangian method to the above problem, a significant challenge lies in the analytical formulation of the feasible set. Observe that the original formulation in \eqref{Eq:ParametricGradientConstraint} is nonsmooth. This issue is probably not critical since the nonsmoothness is a rather ``mild'' one, but nevertheless an alternative formulation is necessary to formally apply our algorithmic framework. To this end, we first discretize the problem and then reformulate the (finite-dimensional) gradient constraint as $G(u,v)\le 0$ with $G(u,v)=\|\nabla v\|^2-\Psi(u)^2$.

For the discretized problems, both Assumption~\ref{Asm:Convex} and \ref{Asm:GeneralConv} are satisfied. Moreover, the feasible sets $\Feas(u)$ are nonempty for all $u\in X$. Hence, it follows from Theorem~\ref{Thm:Optimality_Cvx} that every limit point $\bar{u}$ of the (finite-dimensional) sequence $\{u^k\}$ is a solution of the QVI. For the boundedness of the multiplier sequence, it remains to verify that RCQ holds in $\bar{u}$. In fact, RCQ (which, in this case, is just MFCQ) holds everywhere, since the point zero is a Slater point of the mapping $v\mapsto G(u,v)$ for all $u\in X$.

\begin{figure}\centering
    \subfloat[Solution $\bar{u}$]{\includegraphics[scale=0.5]{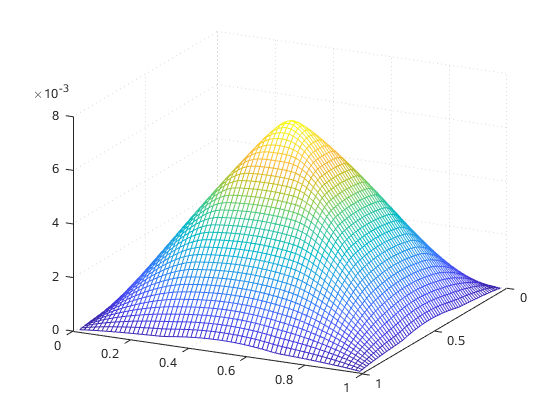}}
    \subfloat[Constraint value $G(\bar{u},\bar{u})$]{\includegraphics[scale=0.5]{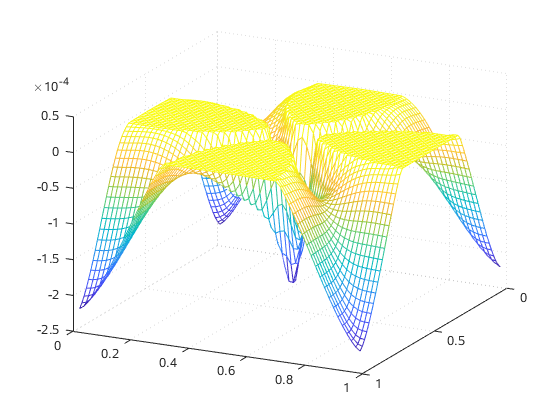}}
    \caption{Numerical results for the gradient-constrained QVI with $n=64$.}
    \label{Fig:GradientConstrained}
\end{figure}

As a numerical application, we consider a slightly modified version of \cite[Example~3]{Hintermueller2012} on the unit square $\Omega:=(0,1)^2$. Note that the original example in this reference is actually solved by the solution of the $p$-Laplace equation $-\Delta_p u-f=0$, $u\in W_0^{1,p}(\Omega)$. Hence, we have modified the example to use the same function $f$ as in \cite{Hintermueller2012} but with the constraint function replaced by $\Psi(u):=0.01+2|\int_{\Omega} u(x) \,\textnormal{d}x|$.

Similarly to \cite{Hintermueller2012}, we discretize the problem with $n\in\N$ interior points per row or column, and use backward differences to approximate the gradient and $p$-Laplace operators.
\begin{center}
\begin{tabular}{r|ccccc}
    $n$ & $16$ & $32$ & $64$ & $128$ & $256$ \\ \hline
	outer it. & 8 & 7 & 7 & 7 & 7 \\
	inner it. & 63 & 52 & 63 & 73 & 103 \\
	$\rho_{\max}$ & $10^4$ & $10^4$ & $10^4$ & $10^4$ & $10^5$
\end{tabular}
\end{center}
As before, the method scales rather well with increasing dimension $n$, and the outer iteration numbers and final penalty parameters remain nearly constant.

\subsection{Generalized Nash Equilibrium Problems}

The example presented in this section is a generalized Nash equilibrium problem (GNEP) arising from optimal control. Let $N\in\N$ be the number of players, each in control of a variable $u^{\nu}\in X_{\nu}$ for some real Banach space $X_{\nu}$. We set $X:=X_1\times\ldots\times X_N$ and write $u=(u^{\nu},u^{-\nu})$, $X=X_{\nu}\times X_{-\nu}$, to emphasize the role of player $\nu$ in the vector $u$ or the space $X$. The GNEP consists of $N$ optimization problems of the form
\begin{equation*}
    \min_{u^{\nu}\in X_{\nu}}\ J_{\nu}(u) \quad\text{s.t.}\quad u^{\nu}\in \Feas_{\nu}(u^{-\nu}),
\end{equation*}
where $J_{\nu}:X\to\R$ is the objective function of player $\nu$ and $\Feas_{\nu}:X_{-\nu}\rightrightarrows X_{\nu}$ the corresponding feasible set. Throughout this section, we consider a special type of GNEP arising from optimal control. To this end, let $\Omega\subseteq\R^d$, $d\in\{2,3\}$, be a bounded Lipschitz domain, and set $X:=L^2(\Omega)^N$. Furthermore, let $S: L^2(\Omega)\to H_0^1(\Omega)\cap C(\bar{\Omega})$ be the solution operator of the standard Poisson equation \cite{Troeltzsch2010}, let $\psi\in C(\bar{\Omega})$, $f,y_{\textnormal{d}}^{\nu}\in L^2(\Omega)$, $\alpha_{\nu}>0$ for all $\nu$, and $y(u):=S(u^1+\ldots+u^N+f)$. Consider the objective functions
\begin{equation*}
    J_{\nu}(u):=\frac{1}{2}\|y(u)-y_{\textnormal{d}}^{\nu}\|_{L^2(\Omega)}^2
    +\frac{\alpha_{\nu}}{2}\|u^{\nu}\|_{L^2(\Omega)}^2,
\end{equation*}
as well as the feasible set mappings
\begin{equation*}
    \Feas_{\nu}(u^{-\nu}):=\{ u^{\nu}\in U_{\text{ad}}^{\nu}: y(u^{\nu},u^{-\nu}) \ge \psi \},
\end{equation*}
where $U_{\text{ad}}^{\nu}\subseteq L^2(\Omega)$ is the set of admissible controls for player $\nu$.

The existence of solutions of the GNEP can be shown as in \cite{Hintermueller2015}. The GNEP can also be rewritten as a QVI by defining $F(u):=(D_{u^{\nu}}J_{\nu})_{\nu=1}^N$ and $\Feas(u):=\prod_{\nu=1}^N\Feas_{\nu}(u^{-\nu})$. The mapping $F$ is continuous and strongly monotone \cite{Kanzow2017b}, hence pseudomonotone. Observe that $\Feas$ can be written in analytic form by setting $\ConX:=\prod_{\nu=1}^N U_{\text{ad}}^{\nu}$, $\SpaceY:=C(\bar{\Omega})^N$, $\ConY:=\SpaceY_+$, and
\begin{equation*}
    G(u,v):=\bigl( y(u^{\nu},v^{-\nu})-\psi \bigr)_{\nu=1}^N.
\end{equation*}
Using the structure of $G$, it is now easy to see that the QVI-tailored RCQ from Section~\ref{Sec:KKT} holds in a feasible point $\bar{u}\in X$ if and only if, for each $\nu$, the Robinson constraint qualification holds in $\bar{u}^{\nu}$ for the optimization problem
\begin{equation}\label{Eq:GNEP_RCQ}
    \min_{u^{\nu}\in U_{\text{ad}}^{\nu}}\ J_{\nu}(u^{\nu},\bar{u}^{-\nu}) \quad\text{s.t.}\quad
    y(u^{\nu},\bar{u}^{-\nu})\ge \psi.
\end{equation}
This is a standard assumption in the optimal control context which is implied, for instance, by a Slater-type condition \cite{Kanzow2016b}. Assume now that the QVI-tailored RCQ holds, and observe that both $G$ and $D_v G$ (which is constant) are completely continuous by standard results on partial differential equations \cite{Troeltzsch2010}. It then follows from Corollary~\ref{Cor:MoscoRCQ} that the feasible set mapping $\Feas$ is weakly Mosco-continuous in $\bar{u}$.

We now apply the augmented Lagrangian method and choose $\SpaceH:=L^2(\Omega)^N$, $\ConH:=\SpaceH_+$. Keeping in mind the discussion above, we obtain from Theorem~\ref{Thm:Optimality_Cvx} that every weak limit point $\bar{u}$ of the sequence $\{u^k\}$ where RCQ holds is a generalized Nash equilibrium, and that the corresponding subsequence of $\{u^k\}$ actually converges strongly to $\bar{u}$ by Corollary~\ref{Cor:StrMonotone}. The weak-$^*$ convergence of the multiplier sequence follows from Theorem~\ref{Thm:Optimality}.

\begin{figure}\centering
    \subfloat[Solution $\bar{u}^1$]{\includegraphics[scale=0.25]{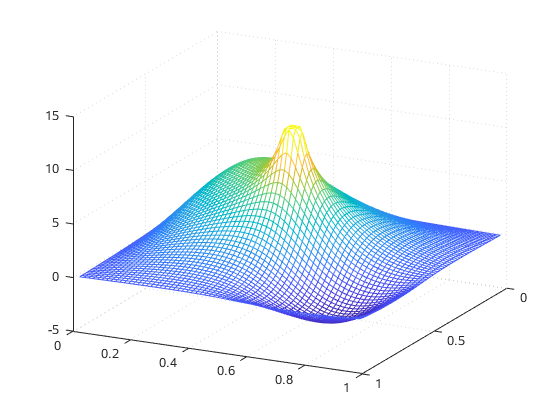}}
    \subfloat[Solution $\bar{u}^2$]{\includegraphics[scale=0.25]{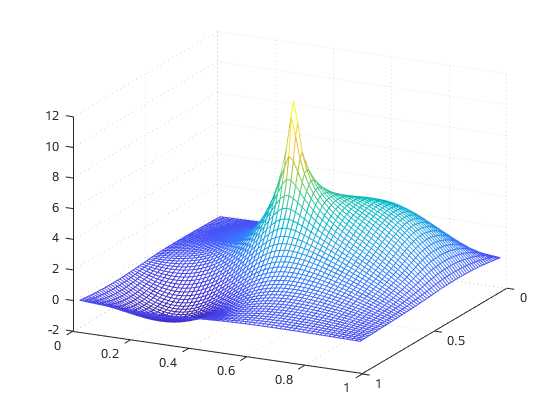}}
    \subfloat[Solution $\bar{u}^3$]{\includegraphics[scale=0.25]{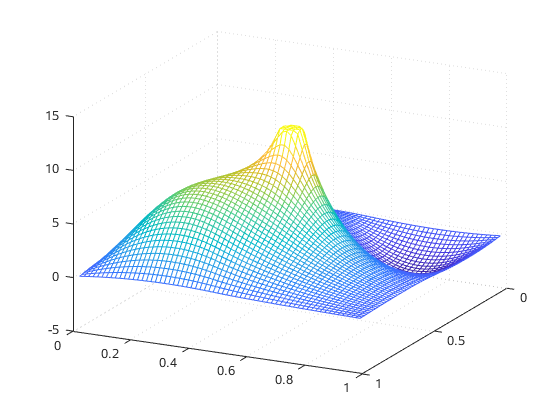}}
    \subfloat[Solution $\bar{u}^4$]{\includegraphics[scale=0.25]{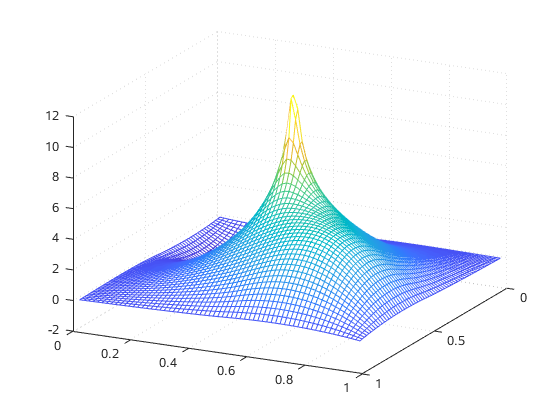}}
    \caption{Numerical results for the optimal control-type GNEP with $n=64$.}
    \label{Fig:OptContGNEP}
\end{figure}

As a numerical application, we consider the first problem presented in \cite{Kanzow2017b}. As before, we discretize the domain $\Omega:=(0,1)^2$ by means of a uniform grid with $n\in\N$ interior points per row or column. The problem in question is a four-player game where $f:= 1$, $U_{\text{ad}}^{\nu}:=\{u^{\nu}\in X: -12\le u^{\nu}\le 12 \}$, and $\alpha:=(2.8859,4.3374,2.5921,3.9481)$. The state constraint $\psi$ is given by
\begin{equation*}
    \psi(x_1,x_2):=\cos \bigl( 5\sqrt{(x_1-0.5)^2+(x_2-0.5)^2} \bigr)+0.1,
\end{equation*}
and the desired states $y_{\textnormal{d}}^{\nu}$ are defined as $y_{\textnormal{d}}^{\nu}:=\xi_{\nu}-\xi_{5-\nu}$, where
\begin{equation*}
    \xi_{\nu}(x_1,x_2):=10^3\max\bigl\{ 0,1-4 \max \{ |x_1-z_{\nu}^1|,|x_2-z_{\nu}^2| \} \bigr\}
\end{equation*}
with $z^1:=(0.25,0.75,0.25,0.75)$ and $z^2:=(0.25,0.25,0.75,0.75)$.

The results of the iteration are displayed in Figure~\ref{Fig:OptContGNEP} and agree with those in \cite{Kanzow2017b}. The corresponding iteration numbers are given as follows.
\begin{center}
\begin{tabular}{r|ccccc}
    $n$ & $16$ & $32$ & $64$ & $128$ & $256$ \\ \hline
	outer it. & 10 & 12 & 12 & 12 & 12 \\
	inner it. & 24 & 29 & 34 & 39 & 43 \\
	$\rho_{\max}$ & $10^8$ & $10^{10}$ & $10^{10}$ & $10^{10}$ & $10^{10}$
\end{tabular}
\end{center}
Once again, we observe that the iteration numbers and final penalty parameters remain nearly constant with increasing $n$.

\section{Final Remarks}\label{Sec:Final}

We have presented a theoretical analysis of quasi-variational inequalities (QVIs) in infinite dimensions and isolated some key properties for their numerical treatment, namely the pseudomonotonicity of the mapping $F$ and the weak Mosco-continuity of the feasible set mapping. Based on these conditions, we have also given an existence result for a rather general type of QVI in Banach spaces. In addition, we have given a formal approach to the first-order optimality (KKT) conditions of QVIs by relating them to those of constrained optimization problems.

Based on the theoretical investigations, we have provided an algorithm of augmented Lagrangian type which possesses powerful global convergence properties. In particular, every weak limit point of the primal sequence is a solution of the QVI under standard assumptions, and the convergence of the multiplier sequence can be established under a suitable constraint qualification (see also Section~\ref{Sec:ApplicSignorini}).

Despite the rather comprehensive character of the paper, there are still multiple aspects which could lead to further developments and research. One important generalization would be to consider not only QVIs of the first kind, but also quasi-variational problems of the second kind or, even more generally, so called quasi-equilibrium problems (see \cite{Blum1994}). In addition, it would be interesting to consider local convergence properties of the augmented Lagrangian method.

\phantomsection
\addcontentsline{toc}{section}{\bibname}
\bibliographystyle{abbrv}
\bibliography{QVI_ALMinf}

\begin{thebibliography}{10}

\bibitem{Adams2003}
R.~A. Adams and J.~J.~F. Fournier.
\newblock {\em Sobolev Spaces}, volume 140 of {\em Pure and Applied Mathematics
  (Amsterdam)}.
\newblock Elsevier/Academic Press, Amsterdam, second edition, 2003.

\bibitem{Adly2010}
S.~Adly, M.~Bergounioux, and M.~Ait~Mansour.
\newblock Optimal control of a quasi-variational obstacle problem.
\newblock {\em J. Global Optim.}, 47(3):421--435, 2010.

\bibitem{Alphonse2018}
A.~Alphonse, M.~Hinterm{\"u}ller, and C.~N. Rautenberg.
\newblock Directional differentiability for elliptic quasi-variational
  inequalities of obstacle type.
\newblock {\em ArXiv e-prints}, February 2018.

\bibitem{Andreani2007}
R.~Andreani, E.~G. Birgin, J.~M. Mart\'inez, and M.~L. Schuverdt.
\newblock On augmented {L}agrangian methods with general lower-level
  constraints.
\newblock {\em SIAM J. Optim.}, 18(4):1286--1309, 2007.

\bibitem{Baiocchi1984}
C.~Baiocchi and A.~Capelo.
\newblock {\em Variational and {Q}uasivariational {I}nequalities}.
\newblock John Wiley \& Sons, Inc., New York, 1984.

\bibitem{Bauschke2011}
H.~H. Bauschke and P.~L. Combettes.
\newblock {\em Convex {A}nalysis and {M}onotone {O}perator {T}heory in
  {H}ilbert {S}paces}.
\newblock Springer, New York, 2011.

\bibitem{Bensoussan1984}
A.~Bensoussan and J.-L. Lions.
\newblock {\em Impulse control and quasivariational inequalities}.
\newblock $\mu $. Gauthier-Villars, Montrouge; Heyden \& Son, Inc.,
  Philadelphia, PA, 1984.
\newblock Translated from the French by J. M. Cole.

\bibitem{Beremlijski2002}
P.~Beremlijski, J.~Haslinger, M.~Ko\v{c}vara, and J.~Outrata.
\newblock Shape optimization in contact problems with {C}oulomb friction.
\newblock {\em SIAM J. Optim.}, 13(2):561--587, 2002.

\bibitem{Bertsekas1982}
D.~P. Bertsekas.
\newblock {\em Constrained {O}ptimization and {L}agrange {M}ultiplier
  {M}ethods}.
\newblock Academic Press, Inc. [Harcourt Brace Jovanovich, Publishers], New
  York-London, 1982.

\bibitem{Bertsekas1995}
D.~P. Bertsekas.
\newblock {\em {N}onlinear {P}rogramming}.
\newblock Athena Scientific, 1995.

\bibitem{Birgin2014}
E.~G. Birgin and J.~M. Mart{\'{\i}}nez.
\newblock {\em Practical {A}ugmented {L}agrangian {M}ethods for {C}onstrained
  {O}ptimization}.
\newblock Society for Industrial and Applied Mathematics (SIAM), Philadelphia,
  PA, 2014.

\bibitem{Birkhoff1936}
G.~Birkhoff.
\newblock On the combination of topologies.
\newblock {\em Fund. Math.}, 26(1):156--166, 1936.

\bibitem{Bliemer2003}
M.~C. Bliemer and P.~H. Bovy.
\newblock Quasi-variational inequality formulation of the multiclass dynamic
  traffic assignment problem.
\newblock {\em Transportation Res. Part B}, 37(6):501--519, 2003.

\bibitem{Blum1994}
E.~Blum and W.~Oettli.
\newblock From optimization and variational inequalities to equilibrium
  problems.
\newblock {\em Math. Student}, 63(1-4):123--145, 1994.

\bibitem{Bonnans2000}
J.~F. Bonnans and A.~Shapiro.
\newblock {\em Perturbation Analysis of Optimization Problems}.
\newblock Springer Series in Operations Research. Springer-Verlag, New York,
  2000.

\bibitem{Brezis1968}
H.~Brezis.
\newblock {\'E}quations et in\'equations non lin\'eaires dans les espaces
  vectoriels en dualit\'e.
\newblock {\em Ann. Inst. Fourier (Grenoble)}, 18(fasc. 1):115--175, 1968.

\bibitem{Brezis2011}
H.~Brezis.
\newblock {\em Functional Analysis, {S}obolev Spaces and Partial Differential
  Equations}.
\newblock Universitext. Springer, New York, 2011.

\bibitem{Brezis1972}
H.~Brezis, L.~Nirenberg, and G.~Stampacchia.
\newblock A remark on {K}y {F}an's minimax principle.
\newblock {\em Boll. Un. Mat. Ital. (4)}, 6:293--300, 1972.

\bibitem{Conn2000}
A.~R. Conn, N.~I.~M. Gould, and P.~L. Toint.
\newblock {\em Trust-region methods}.
\newblock MPS/SIAM Series on Optimization. Society for Industrial and Applied
  Mathematics (SIAM), Philadelphia, PA; Mathematical Programming Society (MPS),
  Philadelphia, PA, 2000.

\bibitem{DeLuca1992}
M.~De~Luca and A.~Maugeri.
\newblock Discontinuous quasi-variational inequalities and applications to
  equilibrium problems.
\newblock In {\em Nonsmooth Optimization: Methods and Applications ({E}rice,
  1991)}, pages 70--75. Gordon and Breach, Montreux, 1992.

\bibitem{Dudley1964}
R.~M. Dudley.
\newblock On sequential convergence.
\newblock {\em Trans. Amer. Math. Soc.}, 112:483--507, 1964.

\bibitem{Emelyanov2007}
S.~V. Emelyanov, S.~K. Korovin, N.~A. Bobylev, and A.~V. Bulatov.
\newblock {\em Homotopy of Extremal Problems. Theory and Applications},
  volume~11 of {\em De Gruyter Series in Nonlinear Analysis and Applications}.
\newblock Walter de Gruyter \& Co., Berlin, 2007.

\bibitem{Flores2000}
F.~Flores-Baz\'an.
\newblock Existence theorems for generalized noncoercive equilibrium problems:
  the quasi-convex case.
\newblock {\em SIAM J. Optim.}, 11(3):675--690, 2000/01.

\bibitem{Hammerstein1994}
P.~Hammerstein and R.~Selten.
\newblock Game theory and evolutionary biology.
\newblock In {\em Handbook of game theory with economic applications, {V}ol.\
  {II}}, volume~11 of {\em Handbooks in Econom.}, pages 929--993.
  North-Holland, Amsterdam, 1994.

\bibitem{Harker1991}
P.~T. Harker.
\newblock Generalized {N}ash games and quasi-variational inequalities.
\newblock {\em European J. Oper. Res.}, 54(1):81--94, 1991.

\bibitem{Haslinger1984}
J.~Haslinger and P.~D. Panagiotopoulos.
\newblock The reciprocal variational approach to the {S}ignorini problem with
  friction. {A}pproximation results.
\newblock {\em Proc. Roy. Soc. Edinburgh Sect. A}, 98(3-4):365--383, 1984.

\bibitem{Hintermueller2012}
M.~Hinterm\"uller and C.~N. Rautenberg.
\newblock A sequential minimization technique for elliptic quasi-variational
  inequalities with gradient constraints.
\newblock {\em SIAM J. Optim.}, 22(4):1224--1257, 2012.

\bibitem{Hintermueller2013a}
M.~Hinterm\"uller and C.~N. Rautenberg.
\newblock Parabolic quasi-variational inequalities with gradient-type
  constraints.
\newblock {\em SIAM J. Optim.}, 23(4):2090--2123, 2013.

\bibitem{Hintermueller2015}
M.~Hinterm\"uller, T.~Surowiec, and A.~K\"ammler.
\newblock Generalized {N}ash equilibrium problems in {B}anach spaces: theory,
  {N}ikaido-{I}soda-based path-following methods, and applications.
\newblock {\em SIAM J. Optim.}, 25(3):1826--1856, 2015.

\bibitem{Ichiishi1983}
T.~Ichiishi.
\newblock {\em Game Theory for Economic Analysis}.
\newblock Economic Theory, Econometrics, and Mathematical Economics. Academic
  Press, Inc. [Harcourt Brace Jovanovich, Publishers], New York, 1983.

\bibitem{Jing1999}
W.~Jing-Yuan and Y.~Smeers.
\newblock Spatial oligopolistic electricity models with cournot generators and
  regulated transmission prices.
\newblock {\em Oper. Res.}, 47(1):102--112, 1999.

\bibitem{Kanzow2016}
C.~Kanzow.
\newblock On the multiplier-penalty-approach for quasi-variational
  inequalities.
\newblock {\em Math. Program.}, 160(1-2, Ser. A):33--63, 2016.

\bibitem{Kanzow2017b}
C.~Kanzow, V.~Karl, D.~Steck, and D.~Wachsmuth.
\newblock The multiplier-penalty method for generalized {N}ash equilibrium
  problems in {B}anach spaces.
\newblock {\em SIAM J. Optim.}, to appear.

\bibitem{Kanzow2016a}
C.~Kanzow and D.~Steck.
\newblock Augmented {L}agrangian methods for the solution of generalized {N}ash
  equilibrium problems.
\newblock {\em SIAM J. Optim.}, 26(4):2034--2058, 2016.

\bibitem{Kanzow2017}
C.~Kanzow and D.~Steck.
\newblock An example comparing the standard and safeguarded augmented
  {L}agrangian methods.
\newblock {\em Oper. Res. Lett.}, 45(6):598--603, 2017.

\bibitem{Kanzow2017a}
C.~Kanzow and D.~Steck.
\newblock Augmented {L}agrangian and exact penalty methods for
  quasi-variational inequalities.
\newblock {\em Comput. Optim. Appl.}, 69(3):801--824, 2018.

\bibitem{Kanzow2017c}
C.~Kanzow and D.~Steck.
\newblock On error bounds and multiplier methods for variational problems in
  {B}anach spaces.
\newblock {\em SIAM J. Control Optim.}, 56(3):1716--1738, 2018.

\bibitem{Kanzow2016b}
C.~Kanzow, D.~Steck, and D.~Wachsmuth.
\newblock An augmented {L}agrangian method for optimization problems in
  {B}anach spaces.
\newblock {\em SIAM J. Control Optim.}, 56(1):272--291, 2018.

\bibitem{Karamardian1976}
S.~Karamardian.
\newblock Complementarity problems over cones with monotone and pseudomonotone
  maps.
\newblock {\em J. Optimization Theory Appl.}, 18(4):445--454, 1976.

\bibitem{Karl2018}
V.~Karl and D.~Wachsmuth.
\newblock An augmented {L}agrange method for elliptic state constrained optimal
  control problems.
\newblock {\em Comput. Optim. Appl.}, 69(3):857--880, 2018.

\bibitem{Kharroubi2010}
I.~Kharroubi, J.~Ma, H.~Pham, and J.~Zhang.
\newblock Backward {SDE}s with constrained jumps and quasi-variational
  inequalities.
\newblock {\em Ann. Probab.}, 38(2):794--840, 2010.

\bibitem{Kim1988}
W.~K. Kim.
\newblock Remark on a generalized quasivariational inequality.
\newblock {\em Proc. Amer. Math. Soc.}, 103(2):667--668, 1988.

\bibitem{Kravchuk2007}
A.~S. Kravchuk and P.~J. Neittaanm\"aki.
\newblock {\em Variational and Quasi-Variational Inequalities in Mechanics},
  volume 147 of {\em Solid Mechanics and its Applications}.
\newblock Springer, Dordrecht, 2007.

\bibitem{Kunze2000}
M.~Kunze and J.~F. Rodrigues.
\newblock An elliptic quasi-variational inequality with gradient constraints
  and some of its applications.
\newblock {\em Math. Methods Appl. Sci.}, 23(10):897--908, 2000.

\bibitem{Lignola1997}
M.~B. Lignola and J.~Morgan.
\newblock Convergence of solutions of quasi-variational inequalities and
  applications.
\newblock {\em Topol. Methods Nonlinear Anal.}, 10(2):375--385, 1997.

\bibitem{Megginson1998}
R.~E. Megginson.
\newblock {\em An Introduction to {B}anach Space Theory}, volume 183 of {\em
  Graduate Texts in Mathematics}.
\newblock Springer-Verlag, New York, 1998.

\bibitem{Moreau1962}
J.-J. Moreau.
\newblock D\'ecomposition orthogonale d'un espace hilbertien selon deux c\^ones
  mutuellement polaires.
\newblock {\em C. R. Acad. Sci. Paris}, 255:238--240, 1962.

\bibitem{Mosco1976}
U.~Mosco.
\newblock Implicit variational problems and quasi variational inequalities.
\newblock pages 83--156. Lecture Notes in Math., Vol. 543, 1976.

\bibitem{Nocedal2006}
J.~Nocedal and S.~J. Wright.
\newblock {\em Numerical {O}ptimization}.
\newblock Springer, New York, second edition, 2006.

\bibitem{Outrata1998}
J.~Outrata, M.~Ko\v{c}vara, and J.~Zowe.
\newblock {\em Nonsmooth Approach to Optimization Problems with Equilibrium
  Constraints. Theory, Applications and Numerical Results}, volume~28 of {\em
  Nonconvex Optimization and its Applications}.
\newblock Kluwer Academic Publishers, Dordrecht, 1998.

\bibitem{Pang2005}
J.-S. Pang and M.~Fukushima.
\newblock Quasi-variational inequalities, generalized {N}ash equilibria, and
  multi-leader-follower games.
\newblock {\em Comput. Manag. Sci.}, 2(1):21--56, 2005.

\bibitem{Renardy2004}
M.~Renardy and R.~C. Rogers.
\newblock {\em An Introduction to Partial Differential Equations}, volume~13 of
  {\em Texts in Applied Mathematics}.
\newblock Springer-Verlag, New York, second edition, 2004.

\bibitem{Rodrigues2000}
J.~F. Rodrigues and L.~Santos.
\newblock A parabolic quasi-variational inequality arising in a
  superconductivity model.
\newblock {\em Ann. Scuola Norm. Sup. Pisa Cl. Sci. (4)}, 29(1):153--169, 2000.

\bibitem{Rudin1991}
W.~Rudin.
\newblock {\em Functional Analysis}.
\newblock International Series in Pure and Applied Mathematics. McGraw-Hill,
  Inc., New York, second edition, 1991.

\bibitem{Scrimali2004}
L.~Scrimali.
\newblock Quasi-variational inequalities in transportation networks.
\newblock {\em Math. Models Methods Appl. Sci.}, 14(10):1541--1560, 2004.

\bibitem{Shih1985}
M.~H. Shih and K.-K. Tan.
\newblock Generalized quasivariational inequalities in locally convex
  topological vector spaces.
\newblock {\em J. Math. Anal. Appl.}, 108(2):333--343, 1985.

\bibitem{Steck2018}
D.~Steck.
\newblock {\em Lagrange Multiplier Methods for Constrained Optimization and
  Variational Problems in Banach Spaces}.
\newblock {PhD} thesis (submitted), University of Würzburg, 2018.

\bibitem{Tartar2007}
L.~Tartar.
\newblock {\em An Introduction to {S}obolev Spaces and Interpolation Spaces},
  volume~3 of {\em Lecture Notes of the Unione Matematica Italiana}.
\newblock Springer, Berlin; UMI, Bologna, 2007.

\bibitem{Troeltzsch2010}
F.~Tr{\"o}ltzsch.
\newblock {\em Optimal {C}ontrol of {P}artial {D}ifferential {E}quations}.
\newblock American Mathematical Society, Providence, RI, 2010.

\bibitem{Zeidler1990}
E.~Zeidler.
\newblock {\em Nonlinear Functional Analysis and its Applications. {II}/{B}:
  Nonlinear Monotone Operators}.
\newblock Springer-Verlag, New York, 1990.
\newblock Translated from the German by the author and Leo F. Boron.

\end{thebibliography}

\end{document}